\documentclass[11pt]{article}
\usepackage{amsmath,amsthm,amssymb}
\usepackage[usenames,dvipsnames]{xcolor}
\usepackage{enumerate}
\usepackage{graphicx}
\usepackage{cite}
\usepackage{comment}
\usepackage{oands}
\usepackage{tikz}
\usepackage{changepage}
\usepackage{bbm}
\usepackage{mathtools}
\usepackage[margin=1in]{geometry}
\usepackage{setspace}
\setstretch{1.1}
\usepackage[pagewise,mathlines]{lineno}
\usepackage{appendix}
\usepackage{stmaryrd} 
\usepackage{multicol}
\usepackage{microtype}
\usepackage[colorinlistoftodos]{todonotes}
\usepackage{enumitem}

\newcommand{\inputtitle}
{Anomalous diffusion of random walk on random planar maps}

\usepackage[pdftitle={\inputtitle},
  pdfauthor={Ewain Gwynne, Tom Hutchcroft},
colorlinks=true,linkcolor=NavyBlue,urlcolor=RoyalBlue,citecolor=PineGreen,bookmarks=true,bookmarksopen=true,bookmarksopenlevel=2,unicode=true,linktocpage]{hyperref}

\setcounter{tocdepth}{2}






\theoremstyle{plain}
\newtheorem{thm}{Theorem}[section]
\newtheorem{cor}[thm]{Corollary}
\newtheorem{lem}[thm]{Lemma}
\newtheorem{prop}[thm]{Proposition}

\def\@rst #1 #2other{#1}
\newcommand\MR[1]{\relax\ifhmode\unskip\spacefactor3000 \space\fi
  \MRhref{\expandafter\@rst #1 other}{#1}}
\newcommand{\MRhref}[2]{\href{http://www.ams.org/mathscinet-getitem?mr=#1}{MR#2}}


\theoremstyle{definition}
\newtheorem{defn}[thm]{Definition}
\newtheorem{remark}[thm]{Remark}

\numberwithin{equation}{section}

\newcommand{\dsb}{\begin{adjustwidth}{2.5em}{0pt}
\begin{footnotesize}}
\newcommand{\dse}{\end{footnotesize}
\end{adjustwidth}}

\newcommand{\ssb}{\begin{adjustwidth}{2.5em}{0pt}}
\newcommand{\sse}{\end{adjustwidth}}

\newcommand{\aryb}{\begin{eqnarray*}}
\newcommand{\arye}{\end{eqnarray*}}
\def\alb#1\ale{\begin{align*}#1\end{align*}}
\def\allb#1\alle{\begin{align}#1\end{align}}
\newcommand{\eqb}{\begin{equation}}
\newcommand{\eqe}{\end{equation}}
\newcommand{\eqbn}{\begin{equation*}}
\newcommand{\eqen}{\end{equation*}}

\newcommand{\BB}{\mathbbm}
\newcommand{\ol}{\overline}
\newcommand{\ul}{\underline}
\newcommand{\op}{\operatorname}

\newcommand{\eqD}{\overset{d}{=}}
\newcommand{\ep}{\epsilon}
\newcommand{\rta}{\rightarrow}

\newcommand{\wt}{\widetilde}
\newcommand{\wh}{\widehat} 
\newcommand{\mcl}{\mathcal}

\newcommand{\bdy}{\partial}
\newcommand{\rng}{\mathring}


\let\originalleft\left
\let\originalright\right
\renewcommand{\left}{\mathopen{}\mathclose\bgroup\originalleft}
\renewcommand{\right}{\aftergroup\egroup\originalright}

\title{\inputtitle}

\date{  }
\author{
\begin{tabular}{c} Ewain Gwynne\\[-5pt]\small Cambridge \end{tabular}
\begin{tabular}{c} Tom Hutchcroft\\[-5pt]\small Cambridge \end{tabular} 
}

\begin{document}

\maketitle
 
\begin{abstract}  
We prove that the simple random walk on the uniform infinite planar triangulation (UIPT) typically travels graph distance at most $n^{1/4 + o_n(1)}$ in $n$ units of time. Together with the complementary lower bound proven by Gwynne and Miller (2017) this shows that the typical graph distance displacement of the walk after $n$ steps is $n^{1/4 + o_n(1)}$, as conjectured by Benjamini and Curien (2013). More generally, we show that the simple random walks on a certain family of random planar maps in the $\gamma$-Liouville quantum gravity (LQG) universality class for $\gamma\in (0,2)$---including spanning tree-weighted maps, bipolar-oriented maps, and mated-CRT maps---typically travels graph distance $n^{1/d_\gamma + o_n(1)}$ in $n$ units of time, where $d_\gamma$ is the growth exponent for the volume of a metric ball on the map, which was shown to exist and depend only on $\gamma$ by Ding and Gwynne (2018). Since $d_\gamma > 2$, this shows that the simple random walk on each of these maps is subdiffusive. 

Our proofs are based on an embedding of the random planar maps under consideration into $\mathbb C$ wherein graph distance balls can be compared to Euclidean balls modulo subpolynomial errors. This embedding arises from a coupling of the given random planar map with a mated-CRT map together with the relationship of the latter map to SLE-decorated LQG. 
\end{abstract}

\tableofcontents

\section{Introduction}
\label{sec-intro}

\subsection{Overview}
\label{sec-overview}
  
It is a consequence of the central limit theorem that simple random walk on Euclidean lattices is \emph{diffusive}, meaning that the end-to-end displacement of an $n$-step simple random walk is of order $\sqrt{n}$ with high probability as $n\to\infty$. In the 1980s, physicists including Alexander and Orbach \cite{alexander1982density} and Rammal and Toulouse \cite{rammal1983random} observed through numerical experiment that, in contrast, random walk on many natural \emph{fractal} graphs, such as those arising in the context of critical disordered systems, is \emph{subdiffusive}, i.e., travels substantially slower than it does on Euclidean lattices. More precisely, they predicted that for many such graphs there exists $\beta>2$ such that the end-to-end displacement of an $n$-step simple random walk is typically\footnote{There are several potentially inequivalent ways to define $\beta$ formally, and we leave this deliberately vague for the purposes of this introduction} of order $n^{1/\beta+o(1)}$ rather than $\sqrt{n}$. This phenomenon is known as \emph{anomalous diffusion}.
 
The first rigorous work on anomalous diffusion was carried out by Kesten \cite{MR871905} who proved that $\beta=3$ for the incipient infinite cluster of critical Bernoulli bond percolation on regular trees of degree at least three, and that random walk on the incipient infinite cluster of $\mathbb{Z}^2$ is subdiffusive with respect to the Euclidean metric (very recently, it was shown by Ganguly and Lee~\cite{gl-subdiffusive} that the walk is also subdiffusive w.r.t.\ the intrinsic metric). Since then, a powerful and general methodology has been developed to analyze anomalous diffusion on \emph{strongly recurrent} graphs, i.e., graphs for which the effective resistance between two points is polynomially large in the distance between them. Highlights of this literature include the work of Barlow and Bass \cite{MR1802425,MR1701339}, Barlow, J\'arai, Kumagai, and Slade \cite{BJKS08}, and Kozma and Nachmias \cite{MR2551766}. A  detailed overview is given in \cite{MR3156983}.  
Beyond the strongly recurrent regime, however, progress has been slow and general techniques are lacking.  

In this paper, we analyze the anomalous diffusion of random walks on \emph{random planar maps}, a class of random fractal objects that have been of central interest in probability theory over the last two decades.
Recall that a \emph{planar map} is a graph embedded in the plane in such a way that no two edges cross, viewed modulo orientation-preserving homeomorphisms. A map is a \emph{triangulation} if each of its faces has three sides. Besides their intrinsic combinatorial interest, the study of random planar maps is motivated by their interpretation as discretizations of continuum random surfaces known as \emph{$\gamma$-Liouville quantum gravity (LQG) surfaces}, where  $\gamma \in (0,2]$ is a parameter describing different universality classes of random map models. 
 LQG surfaces with  $\gamma=\sqrt{8/3}$ arise as scaling limits of uniform random planar maps, while other values of $\gamma$ arise as the scaling limits of random planar maps sampled with probability proportional to the partition function of some appropriately chosen statistical mechanics model. 
The theory of LQG and its connection to random planar maps originated in the physics literature with the work of Polyakov~\cite{polyakov-qg1} and was formulated mathematically in~\cite{shef-kpz} (see also~\cite{rhodes-vargas-review,berestycki-gmt-elementary} for surveys of a closely related theory called \emph{Gaussian multiplicative chaos}, which was initiated by Kahane~\cite{kahane}). Random planar maps are not strongly recurrent (rather, effective resistances are expected to grow logarithmically), and our techniques are highly specific to random map models falling into a $\gamma$-LQG universality class for some $\gamma \in (0,2)$. 

We will be primarily interested in \emph{infinite} random planar maps, which arise as the local limits of finite random planar maps with a uniform random root vertex with respect to the Benjamini-Schramm local topology \cite{benjamini-schramm-topology}. One of the most important infinite random planar maps is the uniform infinite planar triangulation (UIPT), first constructed by Angel and Schramm~\cite{angel-schramm-uipt},  which is the local limit of uniform random triangulations of the sphere as the number of triangles tends to $\infty$. 
Strictly speaking, the UIPT comes in three varieties, known as type I, II, and III, according to whether loops or multiple edges are allowed. To avoid unnecessary technicalities, we will work exclusively in the type II case, in which multiple edges are allowed but self-loops are not. 

The \emph{metric} properties of the UIPT and other uniform random maps have been firmly understood for some time now. In particular, Angel~\cite{angel-peeling} established that the volume of a graph distance ball of radius $r$ in the UIPT grows like $r^4$, and it is known that the (type I) UIPT converges under rescaling to a continuum random surface known as the \emph{Brownian plane} \cite{curien-legall-plane,1604.06622}, which also admits a direct and tractable description as a random quotient of the infinite continuum random tree. Similarly, large finite uniform random triangulations are known to converge under rescaling to a well-understood continuum random surface known as the \emph{Brownian map}~\cite{legall-uniqueness,miermont-brownian-map} (see~\cite{ab-simple,aw-core} for the case of type II and III triangulations).  

The understanding of the \emph{spectral} properties of the UIPT is much less advanced, although a candidate for the scaling limit of random walk on the UIPT, namely \emph{Liouville Brownian motion}, has been constructed~\cite{berestycki-lbm,grv-lbm} and is now reasonably well understood. Important early contributions were made by Benjamini and Curien~\cite{benjamini-curien-uipq-walk}, who proved that random walk on the UIPT is subdiffusive, and by Gurel-Gurevich and Nachmias~\cite{gn-recurrence}, who proved that the random walk on the UIPT is recurrent.
Benjamini and Curien proved furthermore that $\beta\geq 3$ if it exists, and conjectured that $\beta=4$ \cite[Conjecture 1]{benjamini-curien-uipq-walk}.
 Alternative proofs of all of these results, using methods closer  to those of the present paper, were recently obtained by Lee~\cite{lee-conformal-growth,lee-uniformizing}. Very recently, Curien and Marzouk~\cite{cm-uipt-walk} have built upon the approach of \cite{benjamini-curien-uipq-walk} to prove the slightly improved bound $\beta \geq 3 +\ep$ for an explicit $\ep \approx 0.03$.
Further recent works have studied the spectral properties of the random walk on causal dynamical triangulations~\cite{chn-causal} and on $\BB Z^2$ weighted by the exponential of a discrete Gaussian free field~\cite{bdg-lqg-rw}, both of which are indirectly related to the models considered in this paper. Finally, recent work of Murugan \cite{1803.11296} has studied anomalous diffusion on certain deterministic fractal surfaces defined via substitution tilings, showing that $\beta$ is equal to the volume growth dimension for several such examples. However, his methods require strong regularity hypotheses on the graph and do not appear to be applicable to random maps such as the UIPT.

In this paper, we prove the conjecture of Benjamini and Curien in the case of the type II UIPT.
Our techniques also allow us to prove analogous theorems for several other random map models, see Section \ref{sec-main-results}. We use $\op{dist}^G(x,y)$ to denote the graph distance between two vertices $x$ and $y$ in the graph $G$.
  
\begin{thm} \label{thm-walk-speed-uipt}
Let $(M,\BB v)$ be the uniform infinite planar triangulation of type II, and let $X$ be a simple random walk on $M$ started at $\BB v$. Almost surely,
\begin{align} 
 \label{eqn-walk-speed-uipt}
 \lim_{n\rta\infty} \frac{\log \max_{1\leq j \leq n } \op{dist}^M(\BB v, X_j) }{\log n} &= \frac14. 
 \end{align} 
\end{thm}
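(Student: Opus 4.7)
\emph{Proof proposal.} The strategy is to show that the walk requires time at least $r^{4 - o_r(1)}$ to exit the graph-distance ball $B^M_r(\BB v)$ with probability tending to $1$ as $r \to \infty$. Given such exit-time lower bounds along a dyadic sequence of radii, a Borel--Cantelli argument yields the almost-sure displacement upper bound $\max_{j \leq n} \op{dist}^M(\BB v, X_j) \leq n^{1/4 + o_n(1)}$, which combined with the complementary lower bound of Gwynne--Miller gives~\eqref{eqn-walk-speed-uipt}. The exit time bound will follow from two classical ingredients in the theory of random walks on graphs: a volume lower bound $|B^M_r(\BB v)| \geq r^{4 - o_r(1)}$, and an effective resistance lower bound $R^M_{\op{eff}}(\BB v, \bdy B^M_r(\BB v)) \geq r^{- o_r(1)}$. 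Via a version of the Einstein relation---i.e., $\mathbb E_{\BB v}[\tau_{B^M_r}]$ is comparable to the product of ball volume and effective resistance---these estimates combine to yield $\mathbb E_{\BB v}[\tau_{B^M_r}] \geq r^{4-o(1)}$ with high probability. The volume lower bound follows from Angel's peeling analysis of the UIPT, or equivalently from the Ding--Gwynne result $d_\gamma = 4$ for $\gamma = \sqrt{8/3}$, so the main task reduces to the resistance lower bound.

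For the resistance bound, I would use the embedding of $M$ into $\mathbb C$ provided by the coupling of the UIPT with a mated-CRT map and the associated SLE-decorated LQG picture. The plan is to exhibit a unit flow from $\BB v$ to $\bdy B^M_r(\BB v)$ whose $L^2$ energy is at most $r^{o(1)}$; by the Dirichlet variational characterization of effective resistance, this gives the desired lower bound on $R^M_{\op{eff}}$. The flow will be constructed as the discrete gradient of the discretization of a smooth cut-off function on $\mathbb C$ that equals $1$ near $\BB v$ and vanishes outside a Euclidean disk of radius $r^{1/4 + o(1)}$. By the subpolynomial comparison between graph-distance balls and Euclidean balls in the embedding, this disk approximately contains $B^M_r(\BB v)$, so the associated flow indeed separates $\BB v$ from $\bdy B^M_r(\BB v)$. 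Its continuum Dirichlet energy is a subpolynomial function of $r$, and one shows that the discrete graph-energy is comparable to the continuum energy up to subpolynomial factors, again using the comparison between graph and Euclidean geometry in the embedding.

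The chief technical obstacle is establishing the subpolynomial comparison between graph-distance balls in $M$ and Euclidean balls in the embedding, uniformly over the scales and centers needed for the argument. Such estimates are natural for the mated-CRT map, which is directly built from LQG, but transferring them to the UIPT requires careful use of the coupling between the two models together with quantitative control on the LQG metric (whose existence and basic regularity have only been developed in recent years). A secondary difficulty is controlling the discrepancy between discrete graph-energies and continuum Dirichlet energies---one wants only subpolynomial loss, which requires a detailed understanding of the geometry of the embedding across all scales from macroscopic down to the individual cell, as well as moment bounds ensuring that no single region carries an atypically large fraction of the discrete energy.
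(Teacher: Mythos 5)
Your high-level plan (exit-time lower bound $\Rightarrow$ displacement upper bound via Borel--Cantelli, combined with the Gwynne--Miller lower bound) is the right logical skeleton, and the volume lower bound $|\mcl V\mcl B_r^M(\BB v)|\geq r^{4-o(1)}$ and the graph-ball/Euclidean-ball comparison under the embedding are both genuinely available (the latter is Proposition~\ref{prop-ball-compare}, quoted from~\cite{dg-lqg-dim}). However, there is a fatal gap in the step where you pass from volume and resistance estimates to an exit-time \emph{lower} bound via the ``Einstein relation.'' The identity $\mathbb{E}_{\BB v}[\tau_{B_r}]=R_{\mathrm{eff}}(\BB v,\bdy B_r)\sum_{x\in B_r}\deg(x)\,\mathbb{P}_x[\text{hit }\BB v\text{ before }\bdy B_r]$ gives the easy one-sided bound $\mathbb{E}_{\BB v}[\tau_{B_r}]\leq R_{\mathrm{eff}}\cdot 2|\mcl E(B_r)|$ because the hitting probabilities are at most one; the direction you need requires lower-bounding those hitting probabilities for a substantial fraction of the vertices in $B_r$, which in turn requires two-sided effective-resistance control at all scales and all base points inside the ball, not just from $\BB v$ to $\bdy B_r$. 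In the strongly recurrent regime (resistance growing polynomially in distance) this machinery is available, but the UIPT is \emph{not} strongly recurrent---resistances grow only polylogarithmically---and the paper explicitly flags (Section~\ref{sec-overview}) that making the electrical approach work ``appears to require matching upper and lower bounds for effective resistances\ldots differing by at most a constant order multiplicative factor,'' which is out of reach of current techniques. So the relation you invoke is, in this setting, an unproven heuristic, not a tool.

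A secondary issue: the resistance lower bound $R_{\mathrm{eff}}(\BB v,\bdy B_r^M(\BB v))\geq r^{-o(1)}$ that you aim to prove through the embedding and a discretized cutoff function is already trivially true, since $R_{\mathrm{eff}}(\BB v,\bdy B_r^M(\BB v))\geq 1/\deg(\BB v)$ by Rayleigh monotonicity, and $\deg(\BB v)$ is almost surely finite. (You also mix up Thomson's principle for flows with the Dirichlet principle for potentials: a unit flow of small energy gives an \emph{upper} bound on resistance, while it is the cutoff-function/Dirichlet-energy argument that gives the lower bound you want; what you describe constructing is in fact the latter.) So the hard technical work you flag would at best reproduce a bound that is free. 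Even a sharp lower bound $R_{\mathrm{eff}}\gtrsim\log r$ would not by itself rescue the Einstein-relation step, for the reason above.

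The paper's actual route avoids the electrical bottleneck entirely. Rather than exit times and resistances, it defines a unimodular/reversible vertex weighting $\omega^\ep$ (roughly the diameter of the corresponding SLE cell, suitably regularized; Section~\ref{sec-weight-def}) so that the weighted graph metric approximates Euclidean distance under the embedding, then applies the Ding--Lee--Peres Markov-type~$2$ inequality for weighted planar graphs (Theorem~\ref{thm:DLP}, via Corollary~\ref{cor:UnimodularMarkovType}) to obtain a diffusive bound for the walk in the \emph{weighted} metric. Since $\mathbb{E}[\omega^\ep(\BB v)^2]\leq\ep^{1+o(1)}$ (Proposition~\ref{prop-weight-moment}), after $n\approx\ep^{-1+o(1)}$ steps the weighted (hence Euclidean) displacement is $o(1)$ with high probability; converting Euclidean distance back to graph distance via Proposition~\ref{prop-ball-compare} yields $\max_{j\leq n}\op{dist}^M(\BB v,X_j)\leq n^{1/d_\gamma+o(1)}$. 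This is a global argument that sidesteps the need for sharp resistance or hitting-probability estimates. If you want to pursue the electrical route, you would have to first establish constant-factor-sharp two-sided resistance asymptotics on the UIPT at all scales and positions; that would be a major new contribution and is precisely what is currently missing.
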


The first author and Miller \cite{gm-spec-dim} proved the lower bound for graph distance displacement required for Theorem \ref{thm-walk-speed-uipt} (i.e., the inequality $\beta \leq 4$).
So, to prove Theorem \ref{thm-walk-speed-uipt} it suffices for us to prove the upper bound of \eqref{eqn-walk-speed-uipt} (i.e., the inequality $\beta\geq 4$). 
We emphasize that the proof of this upper bound does not use the results of~\cite{gm-spec-dim} or the related works~\cite{gms-tutte,gms-random-walk,gms-harmonic}. 

The central idea behind the techniques of both this paper and \cite{gm-spec-dim} is that a much more refined study of the random walk on the UIPT is possible once one takes the mating-of-trees perspective on random planar maps and SLE-decorated LQG. In particular, both papers rely heavily on the deep work of of Duplantier, Miller, and Sheffield~\cite{wedges}, which rigorously established for the first time a weak form of the long-conjectured convergence of random planar maps toward LQG. This was done by encoding SLE-decorated LQG in terms of a correlated two-dimensional Brownian motion, an encoding that will be of central importance in this paper. The type of convergence considered in~\cite{wedges} is called \emph{peanosphere convergence} and is proven for various types of random planar maps in~\cite{mullin-maps,bernardi-maps,bernardi-dfs-bijection,shef-burger,kmsw-bipolar,gkmw-burger,lsw-schnyder-wood,bhs-site-perc}.
 
Our proof can very briefly be summarized as follows; a more detailed overview is given in~\ref{sec-embedding-intro}. First, we use the mating-of-trees perspective on the theory of SLE and Liouville quantum gravity (in particular, the results of~\cite{wedges,ghs-map-dist}) to define an embedding of a large finite submap of the UIPT into $\mathbb{C}$ with certain desirable geometric properties.  
More precisely, this embedding is obtained by using a bijective encoding of the UIPT by a two-dimensional random walk~\cite{bernardi-dfs-bijection,bhs-site-perc} and a KMT-type coupling theorem~\cite{zaitsev-kmt} to couple the UIPT with a \emph{mated-CRT map}, a random planar map constructed from a correlated two-sided two-dimensional Brownian motion, in such a way that (large subgraphs of) the two maps differ only by a rough isometry. 
The paper~\cite{wedges} gives a natural embedding of the mated-CRT map into $\BB C$ which comes from the encoding of SLE-decorated LQG in terms of correlated two-dimensional Brownian motion. 
We use this to obtain an embedding of the UIPT into $\BB C$ by composing the above embedding of the mated-CRT map with the rough isometry from the UIPT to the mated-CRT map.
  
In~\cite{gm-spec-dim}, this same coupling between the UIPT and the mated-CRT map was used to prove that the effective resistance between the root vertex and the boundary of the ball of radius $r$ in the UIPT grows at most polylogarithmically in $r$. However, while it may be possible in principle to prove $\beta \geq 4$ using electrical techniques, doing so appears to require matching upper and lower bounds for effective resistances on the UIPT differing by at most a constant order multiplicative factor. Such estimates seem to be out of reach of present techniques, which produce polylogarithmic multiplicative errors.  

Instead, we will follow an approach inspired by that of~\cite{lee-conformal-growth} to bound the displacement of random walk on certain random planar maps, which is based on applying the theory of \emph{Markov-type inequalities}  to the above embedding of the UIPT. In particular, we will apply the Markov-type inequality for weighted planar graph metrics due to Ding, Lee, and Peres~\cite{dlp-markov-type} to a weighted metric on the UIPT which approximates the Euclidean distance under the embedding. Background on Markov-type inequalities is given in Section~\ref{sec-markov-type}.  

Note that while Markov-type inequalities are typically used to prove diffusive upper bounds on the walk, our application is more subtle than this, and does \emph{not} prove a diffusive upper bound for the random walk with respect to the Euclidean metric in the embedding (c.f.\ Theorem~\ref{thm-euc-displacement}). Instead, we prove bounds that yield useful information only when $n$ takes values in certain intermediate scales as compared to the natural scale of the embedding. 
The eventual $n^{1/4+o(1)}$ bound on the graph-distance displacement is obtained by taking $n$ to be ``nearly macroscopic" and using that the typical graph distance diameter of a Euclidean ball under our embedding can be estimated modulo subpolynomial errors due to the results of~\cite{dg-lqg-dim}.\footnote{In the case of the UIPT, we expect that this estimate can alternatively be established using estimates for type-II triangulations with simple boundary which come from~\cite{aasw-type2}, but we will not carry this out here.}

Our proof of Theorem~\ref{thm-walk-speed-uipt} does not apply in the case of the UIPQ, the reason being that we do not have a mating-of-trees type bijection which encodes the UIPQ by means of a random walk with i.i.d.\ increments (see Section~\ref{sec-mated-crt-map}). 
 
\medskip \noindent
\textbf{Acknowledgments.}  We thank two anonymous referees for helpful comments on an earlier version of this manuscript. We thank Marie Albenque, Nina Holden, Jason Miller, Asaf Nachmias, and Xin Sun for helpful discussions. We thank Asaf in particular for bringing the maximal versions of the Markov-type inequalities to our attention. This work was initiated during a visit by TH to MIT, whom he thanks for their hospitality.

\subsection{Mated-CRT map background}
\label{sec-mated-crt-map}

A key tool in the proofs of our main results is the theory of mated-CRT maps, which provide a bridge between combinatorial random planar map models (like the UIPT) and the continuum theory of SLE/LQG.  
Let $\gamma \in (0,2)$ and let $Z = (L,R)$ be a two-sided, two-dimensional Brownian motion with variances and covariances
\eqb \label{eqn-bm-cov}
\op{Var}(L_t) = \op{Var}(R_t) = |t| \quad \op{and} \quad \op{Cov}(L_t, R_t) = -\cos(\pi\gamma^2/4) |t| ,\quad \forall t \in \BB R.
\eqe  
Note that this correlation ranges from $-1$ to 1 as $\gamma$ ranges from 0 to 2. 
The $\gamma$-mated CRT map is a discretized mating of the continuum random trees (CRT's) associated with $L$ and $R$. 
Precisely, for $\ep > 0$ the \emph{$\gamma$-mated-CRT map with spacing $\ep$} is the graph $\mcl G^\ep$ with vertex set $ \ep \BB Z$, with two vertices $x_1,x_2\in  \ep \BB Z$ with $x_1<x_2$ connected by an edge if and only if
\allb \label{eqn-inf-adjacency}
&\left( \inf_{t\in [x_1- \ep , x_1]} L_t \right) \vee \left( \inf_{t\in [x_2- \ep , x_2]} L_t \right) \leq \inf_{t\in [x_1  , x_2 -\ep]} L_t \quad \op{or}\quad \notag\\
&\qquad \qquad \left( \inf_{t\in [x_1- \ep , x_1]} R_t \right) \vee \left( \inf_{t\in [x_2- \ep , x_2]} R_t \right) \leq \inf_{t\in [x_1  , x_2 -\ep]} R_t .
\alle
If both conditions in~\eqref{eqn-inf-adjacency} hold and $|x_1-x_2|>\ep$, then there are two edges between $x_1$ and $x_2$. By Brownian scaling, the law of $\mcl G^\ep$ (as a graph) does not depend on $\ep$, but it is convenient to distinguish graphs with different values of $\ep$ since these graphs have different natural embeddings into $\BB C$ (see the discussion just below). Figure~\ref{fig-mated-crt-map} provides a geometric description of the adjacency condition~\eqref{eqn-inf-adjacency} and an explanation of how to put a planar map structure on $\mcl G^\ep$ under which it is a triangulation. 
 
\begin{figure}[t!]
 \begin{center}
\includegraphics[scale=.65]{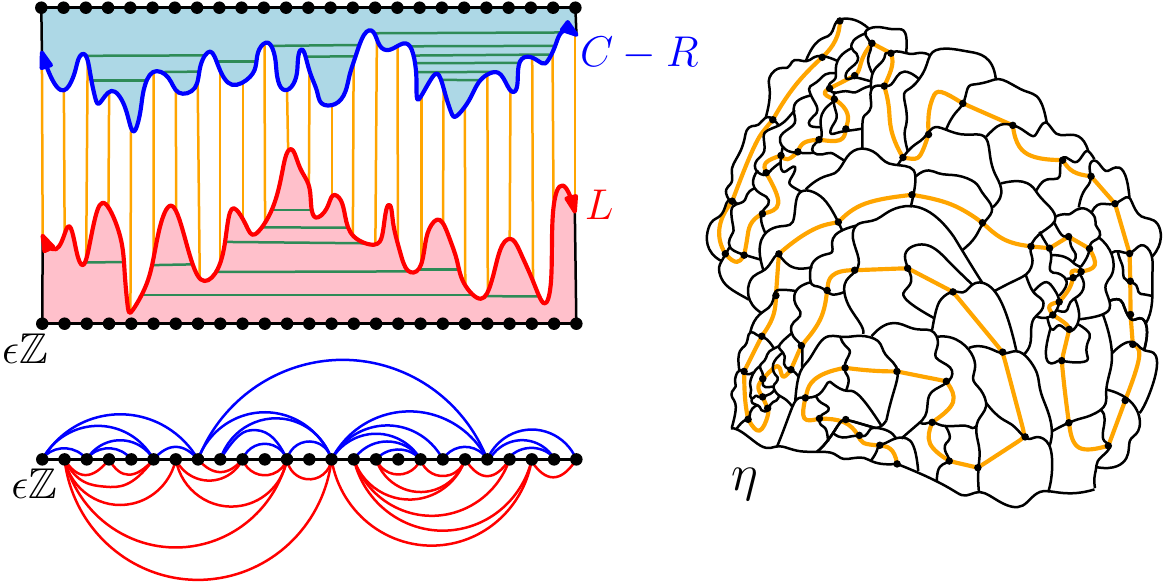}
\vspace{-0.01\textheight}
\caption{\textbf{Top Left:} To construct the mated-CRT map $\mcl G^\ep$ geometrically, one can draw the graph of $L$ (red) and the graph of $C-R$ (blue) for some large constant $C > 0$ chosen so that the parts of the graphs over some time interval of interest do not intersect. One then divides the region between the graphs into vertical strips (boundaries shown in orange) and identifies each strip with the horizontal coordinate $x\in \ep \BB Z$ of its rightmost point. Vertices $x_1,x_2\in \ep \BB Z$ are connected by an edge if and only if the corresponding strips are connected by a horizontal line segment which lies under the graph of $L$ or above the graph of $C-R$. One such segment is shown in green in the figure for each pair of vertices for which this latter condition holds.
\textbf{Bottom Left:} One can draw the graph $\mcl G^\ep $ in the plane by connecting two vertices $x_1,x_2 \in  \ep\BB Z$ by an arc above (resp.\ below) the real line if the corresponding strips are connected by a horizontal segment above (resp.\ below) the graph of $C-R$ (resp.\ $L$), and connecting each pair of consecutive vertices of $ \ep\BB Z$ by an edge. This gives $\mcl G^\ep$ a planar map structure under which it is a triangulation. 
\textbf{Right:} The mated-CRT map can be realized as the adjacency graph of \emph{cells} $\eta([x-\ep,x])$ for $x\in \ep\BB Z$, where $\eta$ is a space-filling SLE$_\kappa$ for $\kappa=16/\gamma^2$ parametrized by $\gamma$-LQG mass with respect to an independent $\gamma$-LQG surface. Here, the cells are outlined in black and the order in which they are hit by the curve is shown in orange. 
Note that the three pictures do not correspond to the same mated-CRT map realization. 
Similar figures have appeared in~\cite{ghs-map-dist,gm-spec-dim,dg-lqg-dim}. 
}\label{fig-mated-crt-map}
\end{center}
\vspace{-1em}
\end{figure} 

The above definition of the mated-CRT map is a continuum analogue of so-called \emph{mating-of-trees} bijections for various infinite-volume combinatorial random planar map models. Such bijections encode a random planar map decorated by a statistical mechanics model via a two-sided two-dimensional random walk $\mcl Z = (\mcl L ,\mcl R) : \BB Z\rta\BB Z^2$, with step distribution depending on the model. For example, for the UIPT, the step distribution is uniform on $\{(1,0),(0,1),(-1,-1)\}$. 
The precise details of the bijection are slightly different for different models, but
 the overall form of the bijection is universal:
In each case, the statistical mechanics model gives rise to a correspondence (not necessarily bijective) between vertices of the map and $\BB Z$ and the condition (in terms of the encoding walk) for two vertices to be adjacent  is a discrete analogue of~\eqref{eqn-inf-adjacency}. The correlation of the coordinates of the walk for planar map models in the $\gamma$-LQG universality class is always $-\cos(\pi\gamma^2/4)$. Mating-of-trees bijections for various random planar maps are studied in~\cite{mullin-maps,bernardi-maps,shef-burger,kmsw-bipolar,gkmw-burger,lsw-schnyder-wood,bernardi-dfs-bijection,bhs-site-perc}.

The mated-CRT map $\mcl G^\ep$ has a natural embedding into $\BB C$ which comes from the theory of SLE-decorated Liouville quantum gravity. 
Here we describe only the basic idea of this embedding. More details can be found in Section~\ref{sec-peanosphere} and a thorough treatment is given in the introductory sections of~\cite{ghs-dist-exponent}.  
Although ordinary SLE$_\kappa$ is space filling if and only if $\kappa\geq 8$, it was shown in~\cite{ig4} that a natural space-filling variant of SLE$_\kappa$ exists whenever $\kappa > 4$. For $\kappa \in (4,8)$, this variant recursively explores the bubbles that are cut off by an ordinary SLE$_\kappa$ as they are created.
Let $\eta$ be such a space-filling variant of SLE$_\kappa$ for $\kappa = 16/\gamma^2 >4$ which travels from $\infty$ to $\infty$ in $\BB C$, and suppose we parametrize $\eta$ by $\gamma$-LQG mass with respect to a certain independent $\gamma$-LQG surface called a \emph{$\gamma$-quantum cone}, which describes the local behavior of a GFF viewed from a point sampled from the $\gamma$-LQG measure. Then it follows from~\cite[Theorem 1.9]{wedges} that the mated-CRT map $\mcl G^\ep$ has the same law 
as the adjacency graph of ``cells" $\eta([x-\ep,x])$ for $x\in \ep \BB Z$, with two such cells considered to be adjacent if they intersect along a non-trivial connected boundary arc.
Thus we can embed $\mcl G^\ep$ into $\BB C$ via the map $x\mapsto \eta(x)$, which sends each vertex to the corresponding cell (see Figure~\ref{fig-mated-crt-map}, right panel).
 
\subsection{Main result in the general case}
\label{sec-main-results}

In this section we state our results in full generality. We begin by listing the random planar map models that our results apply to.
Each of the following is an infinite-volume random rooted planar maps $ (M, \BB v )  $, each equipped with its natural root vertex. In each case, the corresponding $\gamma$-LQG universality class is indicated in parentheses.\footnote{The main theorems of~\cite{ghs-map-dist,gm-spec-dim,dg-lqg-dim} also apply to one additional random planar map not listed here: the \emph{uniform infinite Schnyder wood-decorated triangulation}, as constructed in~\cite{lsw-schnyder-wood} ($\gamma = 1$). 
We expect that our results are also valid for this random planar map, but we exclude it to avoid dealing with certain technicalities (see Remark~\ref{remark-schnyder-wood}).}
\begin{enumerate}
\item The \emph{uniform infinite planar triangulation} (UIPT) of type II, which is the local limit of uniform triangulations with no self-loops, but multiple edges allowed~\cite{angel-schramm-uipt} ($\gamma=\sqrt{8/3}$). 
\item The \emph{uniform infinite spanning-tree decorated planar map}, which is the local limit of random spanning-tree weighted planar maps~\cite{shef-burger,chen-fk} ($\gamma = \sqrt 2$).
\item The \emph{uniform infinite bipolar oriented planar map}, as constructed in~\cite{kmsw-bipolar}\footnote{See~\cite[Section~3.3]{ghs-map-dist} for a careful proof that the infinite-volume bipolar-oriented planar maps considered in this paper exist as Benjamini-Schramm~\cite{benjamini-schramm-topology} limits of finite bipolar-oriented maps.} ($\gamma = \sqrt{4/3}$). 
\item More generally, one of the other distributions on infinite bipolar-oriented maps considered in~\cite[Section~2.3]{kmsw-bipolar} for which the face degree distribution has an exponential tail and the correlation between the coordinates of the encoding walk is $-\cos(\pi\gamma^2/4)$ (e.g., an infinite bipolar-oriented $k$-angulation for $k\geq 3$ --- in which case $\gamma=\sqrt{4/3}$ --- or one of the bipolar-oriented maps with biased face degree distributions considered in~\cite[Remark~1]{kmsw-bipolar} (see also~\cite[Section 3.3.4]{ghs-map-dist}), for which $\gamma \in (0,\sqrt 2)$).
\item The $\gamma$-mated-CRT map for $\gamma \in (0,2)$ with cell size $\ep=1$, as defined in Section~\ref{sec-mated-crt-map}.
\end{enumerate}
In the first four cases, $M$ comes with a distinguished root edge $\BB e$ and we let $\BB v$ be one of the endpoints of $\BB e$, chosen uniformly at random. In the case of the mated-CRT map the vertex set is identified with $\mathbb{Z}$ and we take $\BB v = 0$.   

\begin{defn} \label{def-walk} 
We write $X^M$ for the simple random walk on $M$ started from $\BB v$.
\end{defn}

The general version of our main result is an upper bound for the graph distance displacement of $X^M$. For the UIPT (and also the $\sqrt{8/3}$-mated-CRT map) we get an upper bound of $n^{1/4 + o_n(1)}$ for this displacement, which gives the correct exponent. For the other random planar maps listed at the beginning of this subsection, which belong to the $\gamma$-LQG universality class for $\gamma\not=\sqrt{8/3}$, we cannot explicitly compute the exponent for the graph distance displacement of the walk since we do not have exact expressions for the exponents which describe distances in the map. Computing such exponents is equivalent to computing the Hausdorff dimension of $\gamma$-LQG, which is one of the most important problems in the theory of LQG; see~\cite{ghs-dist-exponent,ding-goswami-watabiki,dzz-heat-kernel,dg-lqg-dim} for further discussion. 

However, we know from the results of~\cite{ghs-dist-exponent,ghs-map-dist,dzz-heat-kernel,dg-lqg-dim} that exponents for certain distances in these random planar maps exist. 
In particular, it is shown in~\cite[Theorem 1.6]{dg-lqg-dim} (building on results of~\cite{ghs-map-dist,dzz-heat-kernel}) that there exists for each $\gamma \in (0,2)$ an exponent $d_\gamma > 2$ which for any of the planar maps $(\mcl M  , \BB v)$ above is given by the a.s.\ limit
\eqb \label{eqn-dist-exponent} 
d_\gamma  = \lim_{r \rta\infty} \frac{ \log \# \mcl V\mcl B_r^{ M}(\BB v)   }{\log r} ,
\eqe 
where $   \mcl V\mcl B_r^{ M}(\BB v)$ denotes the vertex set of the graph-distance ball of radius $r$ centered at $\BB v$. 
Note that $d_{\sqrt{8/3}} = 4$ by~\cite[Theorem 1.2]{angel-peeling}. 
The reason for the notation $d_\gamma$ is that this exponent is expected to be the Hausdorff dimension of $\gamma$-LQG. 
The paper~\cite{dg-lqg-dim} also proves bounds for $d_\gamma$, shows that it is a continuous, strictly increasing function of $\gamma$, and (together with~\cite{dzz-heat-kernel}) shows that it describes several quantities associated with continuum LQG --- defined in terms of the Liouville heat kernel, Liouville graph distance, and Liouville first passage percolation. 
Our bounds for graph distances in random planar maps for general $\gamma \in (0,2)$ will be given in terms of $d_\gamma$.

\begin{thm} \label{thm-walk-speed} 
Let $(M,\BB v)$ be one of the random planar maps listed at the beginning of this section and let $\gamma \in (0,2)$ be the corresponding LQG parameter. Let $d_\gamma$ be as in~\eqref{eqn-dist-exponent}.
For each $\zeta \in (0,1)$, there exists $\alpha > 0$ (depending on $\zeta$ and the particular model) such that for each $n\in\BB N$, the simple random walk on $M$ satisfies 
\eqb \label{eqn-walk-speed}
\BB P\left[ \max_{1\leq j \leq n} 
 \op{dist}^M(X_j^M , \BB v) \leq n^{1/d_\gamma + \zeta}  \right] \geq 1 - O_n(n^{-\alpha}) .
\eqe 
Furthermore, a.s.\ 
\begin{align}
 \label{eqn-walk-speed-a.s.}
 \lim_{n\rta\infty} \frac{\log \max_{1\leq j \leq n } \op{dist}^M(  X_j^M , \BB v) }{\log n} &= \frac{1}{d_\gamma} .
 \end{align} 
\end{thm}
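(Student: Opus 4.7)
I would first establish the quantitative tail bound~\eqref{eqn-walk-speed} and then derive the almost sure statement~\eqref{eqn-walk-speed-a.s.} from it. For the upper bound in~\eqref{eqn-walk-speed-a.s.}, apply~\eqref{eqn-walk-speed} along a geometric subsequence $n_k = 2^k$ and use Borel--Cantelli together with monotonicity of $j \mapsto \max_{1\leq i \leq j}\op{dist}^M(\BB v, X_i^M)$ to deduce $\limsup_n \log\max_{j\leq n} \op{dist}^M(\BB v, X_j^M)/\log n \leq 1/d_\gamma + \zeta$ a.s.; taking $\zeta$ through a countable sequence tending to $0$ yields the upper bound. The matching lower bound $\liminf \geq 1/d_\gamma$ is known: for the UIPT it is proved in~\cite{gm-spec-dim}, and in the general case it can be obtained either directly by the methods of that paper or transferred through the rough-isometry coupling with the mated-CRT map from~\cite{ghs-map-dist}.

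To prove~\eqref{eqn-walk-speed} I would follow the scheme outlined in Section~\ref{sec-overview}. Fix $\zeta > 0$ and a target scale $R = R(n)$ comfortably larger than $n^{1/d_\gamma + \zeta}$. Using the mating-of-trees encoding of $(M,\BB v)$ together with a KMT-type strong approximation \cite{zaitsev-kmt} of the encoding walk by the Brownian motion $Z$ driving a mated-CRT map $\mcl G^\ep$ (with cell size chosen to match the scale $R$), couple $M$ with $\mcl G^\ep$ so that a large submap $M_R \subset M$ is rough isometric to the corresponding portion of $\mcl G^\ep$. Composing with the space-filling SLE$_{16/\gamma^2}$/LQG embedding $x \mapsto \eta(x)$ from Section~\ref{sec-mated-crt-map} gives an embedding of $M_R$ into $\BB C$ in which each vertex corresponds to an SLE cell. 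The volume-growth estimate~\eqref{eqn-dist-exponent} from \cite{dg-lqg-dim} then provides a two-sided comparison between graph distances in $M_R$ and Euclidean distances in the embedding, valid up to subpolynomial multiplicative errors.

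With this embedding in hand, define a weighted graph metric $\tilde\rho$ on $M_R$ whose edge weights approximate the Euclidean distances between adjacent SLE cells, and apply the maximal Markov-type~2 inequality for weighted planar graph metrics of Ding--Lee--Peres~\cite{dlp-markov-type} to the simple random walk on $M_R$ started from its stationary distribution. Transferring to the root $\BB v$ via a standard Radon--Nikodym argument and using moment estimates on $\gamma$-LQG cell diameters to control the resulting sum of squared edge weights gives a high-probability bound on the maximal Euclidean displacement of $X^M$ up to time $n$. Converting back via the Euclidean-to-graph distance comparison of~\cite{dg-lqg-dim} yields the desired bound $n^{1/d_\gamma + \zeta}$ on graph-distance displacement.

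The main obstacle is that Markov type~2, applied naively, yields only a \emph{diffusive} bound in the Euclidean metric, whereas the target is a \emph{sub}diffusive graph-distance bound $n^{1/d_\gamma}$ with $d_\gamma > 2$. The resolution, anticipated in Section~\ref{sec-overview}, is to apply the inequality at a carefully chosen intermediate scale: by tuning the cell size and the size of the embedded region so that a typical Euclidean edge weight is small, the Markov $\sqrt{n}$-bound confines the walk to a \emph{strictly sub-macroscopic} Euclidean region, and the unequal scaling between Euclidean and graph distances coming from $d_\gamma > 2$ then upgrades this Euclidean bound into the correct subdiffusive graph-distance exponent. The remaining technical issues --- making subpolynomial errors uniform across the relevant range of scales, passing between the stationary initial distribution on $M_R$ and the walk started at $\BB v$, and absorbing the failure probability of the KMT coupling and of the ``good environment'' event --- are standard in this framework but will require careful bookkeeping.
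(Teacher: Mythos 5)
Your overall scheme—tail bound via Markov-type at a tuned intermediate scale, dyadic Borel--Cantelli plus monotonicity for the a.s.\ upper bound, and citing the displacement lower bound from \cite{gm-spec-dim}---matches the paper's strategy. There are two places, however, where you gloss over steps that actually require care.

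First, the a.s.\ lower bound $\liminf_n\log\max_{j\leq n}\op{dist}^M(\BB v,X_j^M)/\log n\geq 1/d_\gamma$ is \emph{not} a pure citation, even in the UIPT case. What \cite[Theorem 1.7]{gm-spec-dim} supplies is an estimate on the \emph{conditional expectation} of the exit time $\sigma_r$ from $\mcl B_r^M(\BB v)$, and after Chebyshev this yields a bound on $\sigma_r$ holding only with probability $1-O_r((\log r)^{-\alpha/2})$ --- \emph{polylogarithmic}, not polynomial, failure probability. A dyadic Borel--Cantelli does not go through here; the paper uses the much sparser subsequence $r_k=\exp(k^s)$ with $s>2/\alpha$, together with $\limsup$-monotonicity of $\sigma_r$ and the a.s.\ volume growth $\#\mcl V\mcl B_r^M(\BB v)\leq r^{d_\gamma+o_r(1)}$ from \cite[Theorem 1.6]{dg-lqg-dim}, to close the argument. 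Your plan as written (dyadic subsequence for the upper bound, ``the lower bound is known'') does not account for this. Second, and more minor: your statement ``$M_R\subset M$ is rough isometric to a portion of $\mcl G^\ep$'' elides the fact that $M_{I^\ep}$ is only an \emph{almost}-submap of $M$ via the map $\iota_{I^\ep}$, which can identify boundary edges. To pass from the walk on $\rng M_{I^\ep}$ to the walk on $M$, the paper chooses the coupling exponent $K$ large enough (via \cite[Lemma 1.11]{ghs-map-dist}) so that $\iota_{I^\ep}$ restricts to a genuine graph isomorphism on a ball of radius $\ep^{-1}$ about $\BB v$, which suffices because the walk cannot leave such a ball in $n\asymp\ep^{-1+\delta}$ steps. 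Without some version of this step the transfer to $M$ is incomplete.
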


Theorem~\ref{thm-walk-speed-uipt} is the special case of Theorem~\ref{thm-walk-speed} when $(M,\BB v)$ is the UIPT. As noted after the statement of Theorem~\ref{thm-walk-speed-uipt}, the a.s.\ convergence~\eqref{eqn-walk-speed-a.s.} will follow from~\eqref{eqn-walk-speed} together with the corresponding lower bound in~\cite{gm-spec-dim}.

Let us now remark on the implications of Theorem~\ref{thm-walk-speed-uipt} in the case $\gamma\not=\sqrt{8/3}$. It is shown in~\cite[Theorem 1.2]{dg-lqg-dim} that the ball growth exponent $d_\gamma$ satisfies the bounds $\ul d_\gamma \leq d_\gamma \leq \ol d_\gamma $ for  
\eqb \label{eqn-d-lower}
\ul d_\gamma := 
\begin{dcases}
\max\left\{ \sqrt 6 \gamma ,  \frac{2\gamma^2}{4+\gamma^2-\sqrt{16 +\gamma^4}}         \right\} ,\quad &\gamma \leq \sqrt{8/3}  \\
\frac13 \left( 4 + \gamma^2 +\sqrt{16 + 2 \gamma^2 + \gamma^4} \right) ,\quad &\gamma \geq \sqrt{8/3} 
\end{dcases}
\eqe 
and
\eqb  \label{eqn-d-upper}
\ol d_\gamma := 
\begin{dcases}
\min\left\{    \frac13 \left( 4 + \gamma^2 +\sqrt{16 + 2 \gamma^2 + \gamma^4} \right)  ,  2 + \frac{\gamma^2}{2} + \sqrt 2 \gamma \right\} ,\quad &\gamma \leq \sqrt{8/3}  \\
\sqrt 6 \gamma ,\quad &\gamma \geq \sqrt{8/3} 
\end{dcases} .
\eqe 
See Figure~\ref{fig-d-bound} for a graph of the reciporicals of these upper and lower bounds (which correspond to our bounds for the walk speed exponent). 
Since $d_\gamma > 2$ for every $\gamma \in (0,2)$, Theorem~\ref{thm-walk-speed} shows that the random walk on each of the random planar maps considered in this paper is subdiffusive, with reasonably tight bounds for the subdiffusivity exponent. For example, in the case of the spanning-tree weighted map we have 
\eqb 
0.275255 \approx \frac{3}{6 + 2 \sqrt 6} \leq  \frac{1}{d_{\sqrt 2}}    \leq    \frac{1}{2\sqrt 3} \approx 0.288675 .
\eqe 
Further discussion of the source of the upper and lower bounds for $d_\gamma$ and their relationships to various physics predictions can be found in~\cite[Section 1.3]{dg-lqg-dim}.

\begin{figure}[t!]
 \begin{center}
\includegraphics[scale=.6]{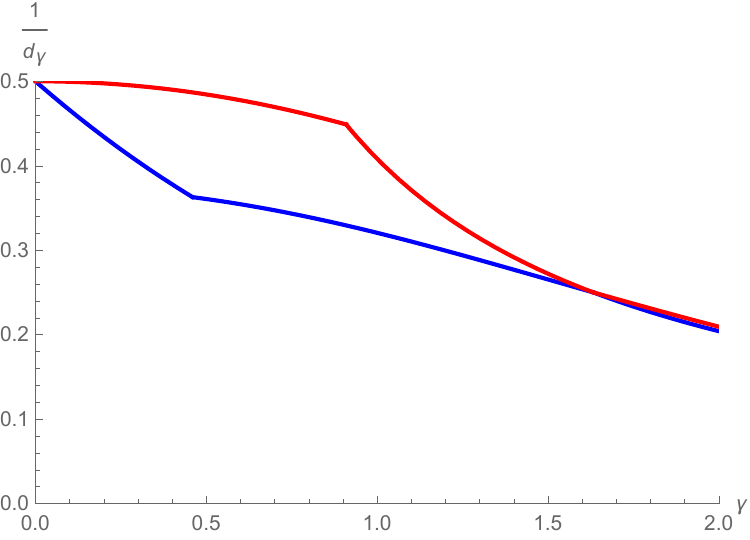} \hspace{15pt} \includegraphics[scale=.6]{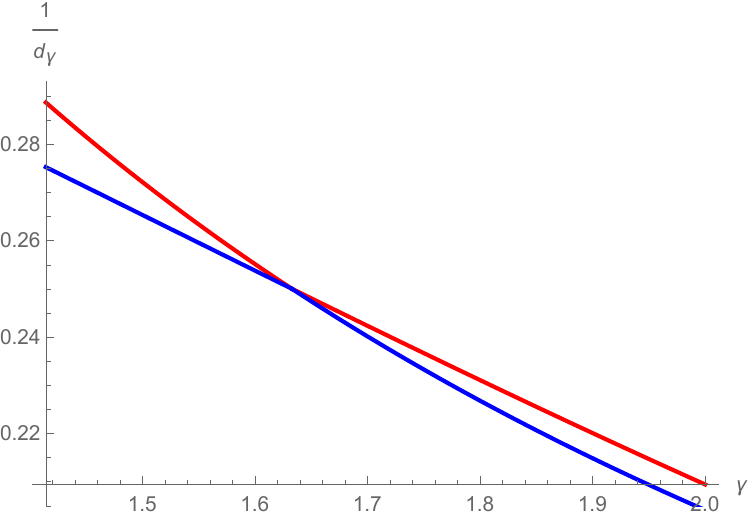}
\vspace{-0.01\textheight}
\caption{ \textbf{Left.} Graph of our upper and lower bounds for the subdiffusivity exponent $1/d_\gamma$ for $\gamma \in (0,2)$. Note that the bounds match only for $\gamma=\sqrt{8/3}$ (which corresponds to the UIPT case). 
\textbf{Right.} Graph of the same functions but restricted to the interval $[\sqrt 2 ,2]$. 
}\label{fig-d-bound}
\end{center}
\vspace{-1em}
\end{figure}

In the course of proving Theorem~\ref{thm-walk-speed}, we will obtain the exponent for the \emph{Euclidean} displacement of random walk on the mated-CRT map under its a priori (SLE/LQG) embedding, which is alluded to in Section~\ref{sec-mated-crt-map} and described in more detail in Section~\ref{sec-peanosphere}.

\begin{thm}[Euclidean displacement exponent] \label{thm-euc-displacement}
Let $\gamma \in (0,2)$ and let $X^{\mcl G^1}$ be the simple random walk on the mated-CRT map $\mcl G^1$ started from 0. Also let $\eta$ be the associated space-filling SLE curve parametrized by $\gamma$-LQG mass as in Section~\ref{sec-mated-crt-map}, so that $\BB Z\ni x\mapsto \eta(x) \in \BB C$ is the embedding of $\mcl G^1$ discussed in that section. Almost surely,
\eqb \label{eqn-euc-displacement}
\lim_{n\rta\infty} \frac{\log \max_{1\leq j \leq n} |\eta(X_j^{\mathcal G^1})|}{\log n} = \frac{1}{2-\gamma^2/2} .
\eqe
\end{thm}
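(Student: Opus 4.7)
The proof of Theorem~\ref{thm-euc-displacement} combines Theorem~\ref{thm-walk-speed} with a geometric comparison between combinatorial graph distance in the mated-CRT map $\mcl G^1$ and Euclidean distance in its SLE/LQG embedding $x\mapsto \eta(x)$. A central input is the LQG volume growth of the $\gamma$-quantum cone, $\mu(B_R^{\BB C}(0)) = R^{2-\gamma^2/2+o_R(1)}$ a.s.; since each cell $\eta([x-1,x])$ has $\gamma$-LQG mass exactly $1$, this implies $|\{x\in \BB Z:|\eta(x)|\le R\}| = R^{2-\gamma^2/2+o_R(1)}$. Combining this with the definition~\eqref{eqn-dist-exponent} of $d_\gamma$ (so $|B_r^{\mcl G^1}(0)| = r^{d_\gamma+o_r(1)}$) and the quantitative comparison between the Liouville graph distance and the Euclidean distance proved in~\cite{dg-lqg-dim}, one obtains almost surely, for every $\zeta>0$ and all sufficiently large $r$,
\[
B_{r^{d_\gamma/(2-\gamma^2/2)-\zeta}}^{\BB C}(0)\cap\{\eta(x):x\in\BB Z\}\;\subseteq\;\{\eta(x):x\in B_r^{\mcl G^1}(0)\}\;\subseteq\;B_{r^{d_\gamma/(2-\gamma^2/2)+\zeta}}^{\BB C}(0).
\]

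For the upper bound in~\eqref{eqn-euc-displacement}, Theorem~\ref{thm-walk-speed} applied to $\mcl G^1$ gives $\max_{1\le j\le n}\op{dist}^{\mcl G^1}(X_j^{\mcl G^1}, 0)\le n^{1/d_\gamma+\zeta}$ a.s.\ for all large $n$. The right-hand containment in the display then converts this to $\max_{1\le j\le n}|\eta(X_j^{\mcl G^1})|\le n^{1/(2-\gamma^2/2)+O(\zeta)}$ a.s., and letting $\zeta\to 0$ yields the upper bound in~\eqref{eqn-euc-displacement}.

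For the matching lower bound, the lower bound on graph displacement from~\cite{gm-spec-dim} is not directly useful---a vertex at large graph distance can lie at small Euclidean distance under the embedding---so I would argue via a commute-time estimate in Euclidean balls. Setting $V_R:=\{x\in\BB Z:|\eta(x)|<R\}$ and $\tau_R:=\min\{j\ge 1:X_j^{\mcl G^1}\notin V_R\}$, the volume estimate gives $|V_R|=R^{2-\gamma^2/2+o_R(1)}$ a.s., while the polylogarithmic effective-resistance bound of~\cite{gm-spec-dim} (proved there for combinatorial graph balls) carries over to Euclidean balls via the comparison above, yielding $R_{\mathrm{eff}}(0,\BB Z\setminus V_R)=R^{o_R(1)}$. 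The commute-time identity then gives $\BB E[\tau_R]\lesssim R^{2-\gamma^2/2+o_R(1)}$, and Markov's inequality plus Borel--Cantelli along $R=n^{1/(2-\gamma^2/2)-\zeta}$ yield $\max_{1\le j\le n}|\eta(X_j^{\mcl G^1})|\ge n^{1/(2-\gamma^2/2)-\zeta}$ a.s.\ for all large $n$. Combined with the upper bound this establishes~\eqref{eqn-euc-displacement}.

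The main obstacle is the upper containment in the displayed comparison: that graph balls of radius $r$ embed into Euclidean balls of radius at most $r^{d_\gamma/(2-\gamma^2/2)+\zeta}$. This does not follow from LQG volume growth alone, since $r^{d_\gamma}$ cells of total LQG mass $r^{d_\gamma}$ could a priori be arranged in a highly elongated region; rather it requires the Liouville-graph-distance to Euclidean-distance comparison from~\cite{dg-lqg-dim}. In contrast, the lower containment follows fairly directly from LQG volume growth of the quantum cone, and the effective-resistance transfer in the lower-bound argument is a straightforward adaptation of \cite{gm-spec-dim}.
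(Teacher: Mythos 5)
Your lower bound is essentially the paper's argument: effective resistance to $\partial B_R(0)$ is polylogarithmic (from \cite[Proposition~3.4]{gm-spec-dim}), the Euclidean ball has $R^{2-\gamma^2/2+o(1)}$ cells, and a Green's-function/commute-time argument plus Borel--Cantelli yields the bound. One small correction there: \cite[Proposition~3.4]{gm-spec-dim} is stated directly for Euclidean balls, so the ``transfer from combinatorial graph balls to Euclidean balls'' step you describe is not needed. You should also be careful with the Borel--Cantelli step, since the probability decay from the resistance estimate is only polylogarithmic; the paper takes $n_m = \exp(m^s)$ with $s > 2/\alpha_0$ and then extends to all $n$ by monotonicity.

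Your upper bound takes a genuinely different route, and it is the weaker part of the proposal. The paper does \emph{not} deduce the Euclidean displacement bound from the graph displacement bound (Theorem~\ref{thm-walk-speed}); rather, it goes the other way. The core input is Proposition~\ref{prop-euclidean-dist} --- proved via the Markov-type inequality of Ding--Lee--Peres applied to the cell-diameter weighting --- which directly bounds the Euclidean displacement of the embedded walk in $\mcl G^\ep$. The upper bound in Theorem~\ref{thm-euc-displacement} then follows by taking $\ep = 1/n$, rescaling via~\eqref{eqn-cone-scale}, and bounding $R_n$ via Lemma~\ref{lem-cone-hit-tail}. Theorem~\ref{thm-walk-speed} is in turn deduced from Proposition~\ref{prop-euclidean-dist} together with Proposition~\ref{prop-ball-compare}, and the latter only provides the inclusion ``Euclidean ball $\subset$ graph ball'' (an upper bound on graph distance across a Euclidean ball). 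Your argument needs the \emph{reverse} inclusion, ``graph ball $\subset$ Euclidean ball'' (a lower bound on the graph distance to the sphere $\partial B_R(0)$), which is nowhere stated or used in this paper. You are right that it does not follow from LQG volume growth alone, and you are right that it should be available from \cite{dg-lqg-dim}; but to make your argument self-contained you would need to extract and verify that complementary estimate, which the paper avoids entirely. Since Theorem~\ref{thm-walk-speed} already sits downstream of Proposition~\ref{prop-euclidean-dist}, your route also does not bypass any of the hard work --- it just detours through an additional ball comparison that the direct route makes unnecessary.
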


The proof of Theorem~\ref{thm-euc-displacement} is explained at the end of Section~\ref{sec-euclidean-dist}. 
The upper bound is essentially an intermediate step in the proof of Theorem~\ref{thm-walk-speed}. The lower bound is a straightforward consequence of~\cite[Proposition 3.4]{gm-spec-dim}, which gives a logarithmic upper bound for the effective resistance to the boundary of a Euclidean ball. 

We note that Theorem~\ref{thm-euc-displacement} is consistent with the Euclidean displacement exponent for Liouville Brownian motion, which was computed by Jackson~\cite[Remark 1.5]{jackson-lbm-thick-pts}. We expect that the exponent $1/(2-\gamma^2/2)$ is universal across unimodular parabolic random planar maps in the $\gamma$-LQG universality class that are embedded in the plane in a conformally natural way. So, for example, the Euclidean distance traveled by an $n$-step random walk on the circle packing of the UIPT 
should be of order $n^{3/2}$, whereas on a spanning tree-weighted map this distance should be of order $n$.
 We do not investigate this further here, however.


\subsection{Basic notation}  
\label{sec-basic-notation} 
 
\noindent
\textbf{Integers.}
We write $\BB N$ for the set of positive integers and $\BB N_0 = \BB N\cup \{0\}$.  
For $a,b \in \BB R$ with $a<b$ and $r > 0$, we define the discrete intervals $[a,b]_{r \BB Z} := [a, b]\cap (r \BB Z)$ and $(a,b)_{r \BB Z} := (a,b)\cap (r\BB Z)$. 
\medskip

\noindent
\textbf{Asymptotics.}
If $a$ and $b$ are two quantities we write $a\preceq b$ (resp.\ $a \succeq b$) if there is a constant $C > 0$ (independent of the values of $a$ or $b$ and certain other parameters of interest) such that $a \leq C b$ (resp.\ $a \geq C b$). We write $a \asymp b$ if $a\preceq b$ and $a \succeq b$. 

If $a$ and $b$ are two quantities depending on a variable $x$, we write $a = O_x(b)$ (resp.\ $a = o_x(b)$) if $a/b$ remains bounded (resp.\ tends to 0) as $x\to 0$ or as $x\to\infty$ (the regime we are considering will be clear from the context). 
We write $a = o_x^\infty(b)$ if $a = o_x(b^s)$ for every $s\in\BB R$. 

We typically describe dependence of implicit constants and $O(\cdot)$ or $o(\cdot)$ errors in the statements of theorems, lemmas, and propositions, and require constants and errors in the proof to satisfy the same dependencies.
\medskip

 \noindent
\textbf{Euclidean space.}
For $K\subset \BB C$, we write $\op{Area}(K)$ for the Lebesgue measure of $K$ and $\op{diam}(K)$ for its Euclidean diameter.
For $r > 0$ and $z\in\BB C$ we write $B_r(z)$ for the open disk of radius $r$ centered at $z$. 
 
\noindent
\textbf{Graphs.} 
For a graph $G$, we write $\mcl V(G)$ and $\mcl E(G)$, respectively, for the set of vertices and edges of $G$, respectively. We sometimes omit the parentheses and write $\mcl VG = \mcl V(G)$ and $\mcl EG = \mcl E(G)$. For $v\in\mcl V(G)$, we write $\op{deg}^G(v)$ for the degree of $v$ (i.e., the number of edges with $v$ as an endpoint). 
For $r \geq 0$ and a vertex $v$ of $G$, we write $\mcl B_r^{G}(v)$ for the \emph{metric ball}, i.e., the subgraph of $G$ induced by the set of vertices of $G$ which lie at graph distance at most $r$ from $v$.

\subsection{Perspective and approach}
\label{sec-embedding-intro}

The first four random planar maps $M$ listed at the beginning of Section~\ref{sec-main-results} are special since these maps (when equipped with an appropriate statistical mechanics model) can be encoded by means of a mating-of-trees bijection for which the encoding walk $\mcl Z$ has i.i.d.\ increments. 
This allows us to couple $M$ with the mated-CRT map $\mcl G^\ep$ by coupling $\mcl Z$ with the two-dimensional Brownian motion $Z$ used to construct $\mcl G^\ep$ in~\eqref{eqn-inf-adjacency}. In particular, we couple $\mcl Z$ and $Z$ using the strong coupling theorem of Zaitsev~\cite{zaitsev-kmt} (which is a generalization of the KMT coupling~\cite{kmt} for walks which do not necessarily have nearest neighbor steps). It is shown in~\cite{ghs-map-dist} that under this coupling it holds with high probability that the following is true (see Section~\ref{sec-coupling} for details): Let $I\subset \BB R$ be a large interval. Then the (almost) submaps $M_{I}$ and $\mcl G^\ep_{\ep I}$ of $M$ and $\mcl G^\ep$, corresponding, respectively, to the time intervals $I \cap \BB Z$ and $\ep (I \cap \BB Z)$ for the encoding processes, are roughly isometric up to a polylogarithmic factor. That is, there is a function from $M_I$ to $\mcl G^\ep_{\ep I}$ which distorts graph distances by a factor of at most $O((\log |I|)^p )$ for a universal constant $p>0$. 

The above coupling is used in~\cite{ghs-map-dist,gm-spec-dim,dg-lqg-dim} to deduce estimates for the map $M$ from estimates for the mated-CRT map, which can in turn be proven using SLE/LQG theory due to the embedding $x\mapsto \eta(x)$ of the mated-CRT map discussed at the end of Section~\ref{sec-mated-crt-map}. 
 
In this paper, we will take a different perspective from the one in~\cite{ghs-map-dist,gm-spec-dim,dg-lqg-dim} in comparing $M$ to the mated-CRT map. Namely, we will first couple $M$ with the mated-CRT map $\mcl G^\ep$ as above with the length of the interval $I$ taken to be a large negative power of $\ep$, so that large subgraphs of $M$ and $\mcl G^\ep$ differ by a rough isometry. We will then study the embedding of (a large subgraph of) $M $ into $\BB C$ which is the composition of the rough isometry $M \rta \mcl G^\ep$ arising from our coupling and the embedding $x\mapsto \eta(x)$ of $\mcl G^\ep$. See Figure~\ref{fig-embedding-setup}.

\begin{figure}[t!]
 \begin{center}
\includegraphics[scale=.65]{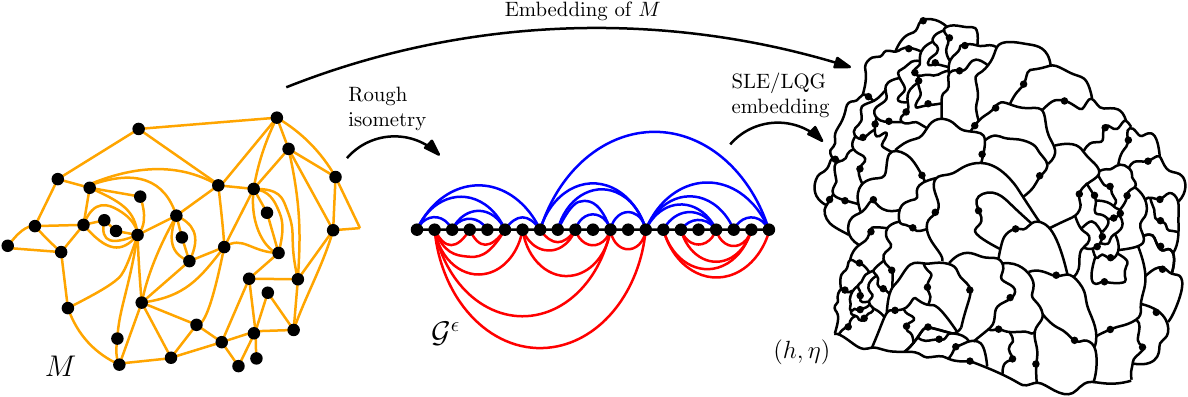}
\vspace{-0.01\textheight}
\caption{We study the embedding of (a large subgraph of) $M $ into $\BB C$ obtained by composing the rough isometry from this subgraph to a subgraph of $ \mcl G^\ep$ which comes from the coupling of~\cite{ghs-map-dist} with the embedding of $\mcl G^\ep$ into $\BB C$ which comes from the fact that $\mcl G^\ep$ is the adjacency graph of space-filling SLE cells with unit quantum mass.
}\label{fig-embedding-setup}
\end{center}
\end{figure} 

A number of papers have studied random planar maps by analyzing their embedding into $\BB C$ via the \emph{circle packing} (see~\cite{stephenson-circle-packing}  for an introduction). This is done in, e.g.,~\cite{benjamini-schramm-topology,gn-recurrence,abgn-bdy,gill-rohde-type,angel-hyperbolic,lee-conformal-growth,lee-uniformizing}.
Some of the techniques used in this paper are similar to ones used to analyze circle packings of random planar maps, but here our planar map is embedded into $\BB C$ using the embedding of Figure~\ref{fig-embedding-setup} rather than the circle packing embedding.  
 
Our embedding has several nice properties.
The space-filling SLE cells (and hence the faces of the embedding) are ``roughly spherical" in the sense that the ratio of their squared Euclidean diameter to their Lebesgue measure is unlikely to be large~\cite[Section 3]{ghm-kpz}. Moreover, the embedding we use also has several properties that are expected but not proven to hold for the circle packing. 
  For instance, the maximal diameter of the cells which intersect a Euclidean ball of fixed radius decays polynomially as $\ep\rta 0$ (Lemma~\ref{lem-max-cell-diam}). This means that the maximal Euclidean length of the embedded edges of $M$ which intersect a fixed Euclidean ball also decays polynomially as $\ep\rta 0$. Establishing the analogous statement for the circle packing of the UIPT is an open problem. We note that some recent progress on this problem has been made in~\cite{gjn-macroscopic-circles} which gives a criterion for a circle packing of a map to have macroscopic circles and proves that this criterion holds for the mated-CRT map.

For our purposes, one of the most important features of our the embedding is that the graph distance diameter of the set of vertices contained in a fixed Euclidean ball (with respect to either $\mcl G^\ep$ or $M$) under the above embedding is with high probabilty at most $\ep^{-1/d_\gamma + o_\ep(1)}$, with $d_\gamma$ as in~\eqref{eqn-dist-exponent}. This was proven in~\cite[Proposition 4.6]{dg-lqg-dim}. Again, the circle packings of the maps we consider are expected but not proven to have this property.

Our embedding gives rise to a weighting on the vertices of $M $ by assigning each vertex a weight equal to, roughly speaking, the diameter of the corresponding space-filling SLE cell (for various technical reasons we use a weight which is not exactly equal to this diameter). 
 This means that the \emph{weighted} graph distance between two embedded vertices approximates their Euclidean distance.
The weighting we consider is defined precisely in Section~\ref{sec-weight-def}. 

We will prove an upper bound for the displacement of the random walk on $M$ with respect to the weighted graph distance, and thereby the embedded Euclidean distance, using Markov-type theory, in particular the results of~\cite{dlp-markov-type}, as mentioned earlier in the introduction. 
We stress again that while Markov-type theory is typically used to prove diffusive upper bounds on the walk, our application is more subtle than this, since, in order to get useful bounds, we need to match up the scaling of the cell size $\ep$ with the number of steps taken by the walk. This is related to the fact that we get an exponent of $1/(2-\gamma^2/2)$ instead of $1/2$ in Theorem~\ref{thm-euc-displacement}. 

Due to the aforementioned comparison between graph distance balls and Euclidean balls, upon taking $\ep^{-1 + o_\ep(1)} = n$ the above upper bound for the Euclidean displacement of the walk gives us our desired upper bound for graph distance displacement and thereby concludes the proof of Theorem~\ref{thm-walk-speed}. We note that our basic strategy is similar to the proof of~\cite[Theorem 1.9]{lee-conformal-growth}, but we have a sharper comparison between weighted and unweighted graph distances than one has for the weighting used in~\cite{lee-conformal-growth}, so we get an optimal bound for the walk displacement exponent.

\subsection{Outline}
\label{sec-outline}

The rest of this paper is organized as follows. In Section~\ref{sec-prelim}, we review some definitions for random planar maps and weight functions on their vertices which originally appeared~\cite{aldous-lyons-unimodular,lee-conformal-growth}, record an extension of a Markov type inequality from~\cite{dlp-markov-type}, review some facts about SLE and LQG, and state the strong coupling result for various combinatorial random planar maps with the mated-CRT map which was proven in~\cite{ghs-map-dist}. 
Section~\ref{sec-core-argument} contains the main body of our proofs, following the approach discussed in Section~\ref{sec-embedding-intro}. Section~\ref{sec-estimates} contains the proofs of some technical estimates which are needed in Section~\ref{sec-core-argument}, but are deferred until later to avoid interrupting the main argument.

\section{Preliminaries}
\label{sec-prelim}

\subsection{Unimodular and reversible weighted graphs}
\label{sec-unimodular}

In this subsection we briefly review the definitions of unimodular and reversible random rooted graphs. We refer the reader to \cite{aldous-lyons-unimodular,angel-hyperbolic}, and the lecture notes \cite{CurienNotes} for a detailed development and overview of this theory.

A \emph{vertex-weighted graph} is a pair $(G,\omega)$ consisting of a graph $G$ and a \emph{weighting} on $G$, i.e., a function $\omega : \mcl V(G) \rta [0,\infty)$. A vertex-weighted graph possesses a natural weighted graph distance. A \emph{path} in $G$ is a function $P : [0,n]_{\BB Z} \rta \mcl V(G)$ for some $n\in\BB N$ such that $P(i)$ and $P(i-1)$ are either equal or connected by an edge in $G$ for each $i\in [1,n]_{\BB Z}$. We write $|P| = n$ for the \emph{length} of $P$. Given a weighted graph $G$ and vertices $v,w\in \mcl V(G)$, we define the \emph{weighted graph distance} by
\eqb \label{eqn-weighted-dist}
\op{dist}_\omega^G(v,w) := \inf_P \sum_{i=1}^{|P|} \frac12 \left( \omega(P(i)) + \omega(P(i-1)) \right) 
\eqe
where the infimum is over all finite paths $P$ in $G$ from $v$ to $w$ in $G$. 
 
Let $\BB G_{\bullet }^{\op{wt}}$ be the space of 3-tuples $(G,\omega , \BB v  )$ consisting of a connected locally finite graph $G$, a weighting on $G$, and a marked vertex of $G$. We equip $\BB G_\bullet^{\op{wt}}$ with the following obvious generalization of the Benjamini-Schramm local topology~\cite{benjamini-schramm-topology}: the distance from $(G,\omega,\BB v) $ to $ (G',\omega',\BB v')$ is the quantity $1/(N+1)$, where $N$ is the largest integer for which there exists a graph isomorphism $\psi : \mcl B_N^G(\BB v) \rta \mcl B_N^{G'}(\BB v')$ such that $|\omega'(\psi(v)) - \omega(v)| \leq 1/N$ for each $v \in \mcl V\mcl B_N^G(\BB v)$. 

We will be interested in \emph{unimodular} and \emph{reversible} random vertex-weighted graphs. For the definitions, we need to consider the space $\BB G_{\bullet \bullet}^{\op{wt}}$ consisting of vertex-weighted graphs with two marked vertices instead of one. The $\sigma$-algebra on $\BB G_{\bullet\bullet}^{\op{wt}}$ is the Borel $\sigma$-algebra corresponding to the topology induced by the metric defined as follows. Suppose that $(G,\omega,\BB v , u)$ and $(G',\omega',\BB v',u')$ are elements of $G_{\bullet\bullet}^{\op{wt}}$. Let $N_0$ (resp.\ $N_0'$) be the graph distance from $\BB v$ to $u$ in $G$ (resp.\ $\BB v'$ to $u'$ in $G'$). 
If either $N_0 \not= N_0'$ or $ \mcl B_{N_0}^G(\BB v)$ and $\mcl B_{N_0}^{G'}(\BB v')$ are not isomorphic as graphs, we define the distance between $(G,\omega,\BB v , u)$ and $(G',\omega',\BB v',u')$ to be 1. Otherwise, we define the distance to be $1/(N+1)$ where $N \geq N_0$ is the largest integer for which there exists a graph isomorphism $\psi : \mcl B_N^G(\BB v) \rta \mcl B_N^{G'}(\BB v')$ such that $\psi(u) = u'$ and $|\omega'(\psi(v)) - \omega(v)| \leq 1/N$ for each $v \in \mcl V\mcl B_N^G(\BB v)$.

\begin{defn}[Unimodular vertex-weighted graph] \label{def-unimodular-weight}
If $(G, \omega , \BB v)$ is a random element of $\BB G_{\bullet}^{\op{wt}}$, we say that $(G,\omega, \BB v)$ is a \emph{unimodular vertex-weighted graph} and $\omega$ is a \emph{unimodular vertex weighting} on $G$ if it satisfies the so-called \emph{mass transport principle}: for each Borel measurable function $F : \BB G_{\bullet\bullet}^{\op{wt}} \rta [0,\infty)$, 
\eqb \label{eqn-unimodular-sum}
\BB E\left[ \sum_{u \in \mcl V(G) } F(G , \omega , \BB v , u ) \right] 
 = \BB E\left[ \sum_{u \in \mcl V(G)} F(G , \omega  ,  u , \BB v) \right]  .
\eqe 
\end{defn}

Unweighted unimodular random rooted graphs are defined similarly.
A unimodular vertex weighting is called a \emph{conformal metric} in~\cite{lee-uniformizing,lee-conformal-growth}. We use the term ``unimodular vertex weighting" instead since we find it more descriptive.

\begin{defn}[Reversible vertex-weighted graph] \label{def-reversible-weight}
If $(G, \omega , \BB v)$ is a random element of $\BB G_{\bullet}^{\op{wt}}$, we say that $(G,\omega, \BB v)$ is a \emph{reversible vertex-weighted graph} and $\omega$ is a \emph{reversible vertex weighting} on $G$ if the following is true. Let $\wt{\BB v}$ be sampled uniformly from the set of neighbors of $\BB v$ in $M$. Then $(G,\omega,\BB v , \wt{\BB v}) \eqD (G,\omega,\wt{\BB v} , \BB v)$. 
\end{defn}

Note that if $(G,\omega,\BB v)$ is unimodular and satisfies $\mathbb{E} \deg \BB v < \infty$, then the random rooted vertex-weighted graph obtained by biasing the law of $(G,\omega,\BB v)$ by $\deg \BB v$ is reversible. Similarly, if $(G,\omega,\BB v)$ is reversible then the random rooted vertex-weighted graph obtained by biasing the law of $(G,\omega,\BB v)$ by $\deg^{-1} \BB v$ is unimodular. See \cite[Proposition 2.5]{benjamini-curien-stationary}.

\subsection{Markov-type inequalities}
\label{sec-markov-type}

In this section we review the notion of \textbf{Markov-type inequalities}, which will play a crucial role in our analysis. 

A metric space $\mathbb{X}=(\mathbb{X},d)$ is said to have \textbf{Markov-type $p$} if there exists a constant $C<\infty$ such that the following holds:  For every finite set $S$, every transition matrix $P$ of an irreducible reversible Markov chain on $S$, and every function $\phi:S\to \mathbb{X}$, we have that
\[\mathbb{E}\Bigl[d\bigl(\phi(X_0),\phi(X_n)\bigr)^p\Bigr] \leq C^p n  \mathbb{E}\Bigl[ d\bigl(\phi(X_0),\phi(X_1)\bigr)^p\Bigr]\]
for every $n\geq 0$, 
where $(X_0)_{n\geq0}$ is a sample of the Markov chain defined by $P$ with $X_0$ distributed according to the stationary measure of $P$. If $\mathbb{X}$ has Markov-type $p$, we refer to the optimal choice of $C$ as $M_p(\mathbb{X})$. Similarly, we say that $\mathbb{X}$ has \textbf{maximal Markov-type $p$} if there exists a constant $C<\infty$ such that
\[\mathbb{E}\left[\max_{0\leq m \leq n} d\bigl(\phi(X_0),\phi(X_m)\bigr)^p\right] \leq C^p n  \mathbb{E}\Bigl[ d\bigl(\phi(X_0),\phi(X_1)\bigr)^p\Bigr]\]
whenever $S,P,\phi$ and $X$ are as above and $n\geq 0$, and refer to the optimal choice of $C$ as $M_p^*(\mathbb{X})$.
We will be interested in applying these inequalities in the case that $p=2$, $S=\mathbb{X}= \mcl V(G)$ is the vertex set of a finite graph, $\phi$ is the identity function, and $X$ is the simple random walk on $G$.

Markov-type inequalities were first introduced by Ball~\cite{ball-markov-type}, who proved that Hilbert space has Markov-type 2. A powerful and elegant method for proving Markov-type inequalities was subsequently developed by Naor, Peres, Schramm, and Sheffield~\cite{npss-markov-type}, who proved Markov-type inequalities for many further examples including trees, hyperbolic groups, and $L^p$ for $p\geq 2$. Furthermore, in each space that they proved has Markov-type $2$, their proof also yielded that the space has \emph{maximal} Markov-type $2$~\cite[Section 8, Remark 8]{npss-markov-type} (it is an open problem to determine whether the two notions are equivalent).
Building upon this work, Ding, Lee, and Peres~\cite{dlp-markov-type} proved the following remarkable theorem.

\begin{thm}[Ding, Lee, and Peres]
\label{thm:DLP}
There exists a universal constant $C$ such that every vertex-weighted planar graph has Markov-type $2$ with $M_2\leq C$.
\end{thm}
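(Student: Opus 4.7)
The plan is to follow the framework of Naor, Peres, Schramm, and Sheffield~\cite{npss-markov-type} for establishing Markov-type $2$ inequalities via padded random partitions. The key input specific to planar graphs is a partitioning theorem: for every vertex-weighted planar graph $(G,\omega)$ and every scale $r > 0$, there is a random partition $\mathcal{P}_r$ of $\mcl V(G)$ into pieces of weighted diameter at most $r$, with the property that for every vertex $v$ and every $s \in (0, r]$, the weighted ball of radius $s$ around $v$ is entirely contained in a single part of $\mathcal{P}_r$ with probability at least $1 - c\, s/r$, for a \emph{universal} constant $c$. In the unweighted setting, the existence of such a partition is due to Klein--Plotkin--Rao; the main new contribution of Ding--Lee--Peres is to establish the analogous statement in the weighted setting with a padding constant uniform over all weightings $\omega$.

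Granting the padded partitions, a standard multi-scale martingale argument produces the Markov-type $2$ bound. Let $X_0,\ldots,X_n$ be a stationary reversible chain on $\mcl V(G)$, and sample independent padded partitions $\mathcal{P}_{r_k}$ at dyadic scales $r_k = 2^k$. For each $k$ one defines, in a measurable way, a ``projection'' $\phi_k$ assigning to each vertex a representative of its part in $\mathcal{P}_{r_k}$, and uses this projection to build a process that approximates the walk at resolution $r_k$. Telescoping across scales and using the crucial padding property---namely that $\phi_k(X_j) \neq \phi_k(X_{j-1})$ only when the walk crosses a partition boundary at scale $r_k$, which has conditional probability $O(\op{dist}_\omega^G(X_{j-1},X_j)/r_k)$---together with orthogonality of martingale increments with respect to a suitably chosen filtration, bounds the scale-$k$ contribution to $\BB E[\op{dist}_\omega^G(X_0,X_n)^2]$ by $C \cdot n \cdot \BB E[\op{dist}_\omega^G(X_0,X_1)^2]$. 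Summing over dyadic scales, the sum converges thanks to the geometric decay in $r_k^{-1}$, and this yields the desired universal Markov-type $2$ bound.

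The maximal version $M_2^*(G) \leq C'$, which is also needed in our application in order to bound $\max_{0 \leq m \leq n} \op{dist}_\omega^G(X_0, X_m)$, follows by essentially the same argument combined with Doob's $L^2$ maximal inequality applied to each scale-$k$ martingale individually, at the cost only of an additional universal factor. The main obstacle throughout is the construction of the padded partitions in the weighted planar setting with universal constants: one cannot directly quote the unweighted Klein--Plotkin--Rao result, because the weighted metric geometry can differ dramatically from the combinatorial one, and the construction instead proceeds by an inductive decomposition along shortest-path separators that uses planarity at each step. This is where all of the graph-theoretic work takes place and is the core novelty of~\cite{dlp-markov-type}.
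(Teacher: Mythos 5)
Theorem~\ref{thm:DLP} is not proved in this paper at all; it is cited from Ding, Lee, and Peres~\cite{dlp-markov-type}, and the only thing the paper argues on its own is the maximal variant in Proposition~\ref{prop:maximalDLP}, which it obtains by pointing at specific places in the DLP proof and indicating that maxima can be inserted. There is therefore no in-paper proof to compare your proposal against.

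As a reconstruction of how~\cite{dlp-markov-type} actually proceeds, your sketch has roughly the right ingredients but misattributes the novelty and glosses over the step where the real difficulty lives. Padded random decompositions with universal constants for shortest-path metrics on weighted planar graphs were already available before DLP (Klein--Plotkin--Rao and its refinements apply uniformly over arbitrary edge weights, and the vertex-weighted metric used here is comparable to an edge-weighted one by averaging over endpoints), so ``weighted KPR'' is not the new contribution. The hard part is converting such decompositions into a Markov-type~$2$ bound for a class of spaces that is neither doubling nor tree-like nor hyperbolic, which falls outside the direct reach of the Naor--Peres--Schramm--Sheffield techniques you invoke. DLP handle this by introducing the notion of a \emph{threshold embedding} into Hilbert space and proving (their Lemma~2.3 and Theorem~3.1, precisely the objects referenced in the paper's proof of Proposition~\ref{prop:maximalDLP}) that admitting such an embedding forces Markov type~$2$; the padded decompositions are used only afterwards, to construct the threshold embedding. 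Your direct telescoping-across-scales argument, as written, does not obviously produce the geometric decay you assert (the expected number of scale-$r_k$ cuts scales like $n\,\mathbb{E}[\op{dist}^G_\omega(X_0,X_1)]/r_k$, while each cut can move the scale-$k$ representative by $\sim r_k$, so the per-scale contributions do not naively shrink), and bridging exactly this gap is what the threshold-embedding layer is for. None of this matters for the present paper, which uses Theorem~\ref{thm:DLP} as a black box, but your summary should not be read as a proof.
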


In fact, the following maximal version of the Ding-Lee-Peres Theorem also follows implictly from their proof.

\begin{prop}
\label{prop:maximalDLP}
There exists a universal constant $C$ such that every vertex-weighted planar graph has maximal Markov-type $2$ with $M^*_2\leq C$.
\end{prop}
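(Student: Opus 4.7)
The plan is to retrace the Ding--Lee--Peres proof of Theorem~\ref{thm:DLP} with the observation that each step lifts to its maximal analogue via Doob's $L^2$ maximal inequality applied to the underlying martingale. The point is that the non-maximal Markov-type $2$ bound for a vertex-weighted planar graph is proved by constructing an auxiliary Hilbert-space-valued martingale $(M_k)_{0\leq k\leq n}$ adapted to the natural filtration of the reversible chain, whose increments have controlled $L^2$ norms and whose total displacement $\|M_n-M_0\|$ dominates $\op{dist}_\omega^G(X_0,X_n)$ up to a universal constant. Doob's inequality immediately gives
\[
\BB E\!\left[\max_{0\leq k\leq n}\|M_k-M_0\|^2\right]\leq 4\,\BB E\!\left[\|M_n-M_0\|^2\right],
\]
so the maximal version follows with $M_2^*\leq 2M_2$ provided the pointwise comparison $\op{dist}_\omega^G(X_0,X_k)\leq c\|M_k-M_0\|$ holds uniformly in $k\leq n$, not merely at $k=n$.

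First, I would recall the Naor--Peres--Schramm--Sheffield framework from~\cite{npss-markov-type}: given a bi-Lipschitz (or coarsely Lipschitz, across scales) embedding $\psi:\BB X\hookrightarrow H$ of the metric space $\BB X$ into a Hilbert space $H$, and a map $\phi:S\to\BB X$, the process $(\psi\circ\phi)(X_k)$ along a stationary reversible chain $(X_k)$ on $S$ is amenable to a martingale decomposition whose increments are orthogonal and individually controlled by single-step expectations on $S$. Critically, the Lipschitz property of $\psi$ is a pointwise statement, so $\|\psi(\phi(X_k))-\psi(\phi(X_0))\|$ controls $\op{dist}_{\BB X}(\phi(X_k),\phi(X_0))$ for every $k$, not just the terminal time. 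This is exactly the hypothesis needed for the Doob upgrade to work, and it is implicit in all of the NPSS proofs (see~\cite[Section~8, Remark~8]{npss-markov-type}).

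Second, I would verify that the DLP embedding of a vertex-weighted planar graph into $L^2$ also has this pointwise Lipschitz property. Their embedding is constructed from a multiscale padded decomposition of $G$ at dyadic scales, together with the standard Frechet-style coordinates at each scale; the resulting map is a genuine Lipschitz embedding of $(\mcl V(G),\op{dist}_\omega^G)$ into $L^2$, with Lipschitz constant bounded by a universal constant times the embedding distortion. Combined with the NPSS martingale machinery, this gives a martingale $(M_k)$ with the desired uniform-in-$k$ comparison, and the Doob inequality above closes the argument.

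The only real point of care is bookkeeping: one has to confirm that no step in the DLP proof actually requires working only at the endpoint $k=n$ (for instance, no arguments using stopping times that terminate at $n$ or conditionings on $\{X_n\in\cdot\}$). I expect this to be entirely routine for the embedding-based proof of DLP, since the embedding is constructed independently of the chain. The main (non-)obstacle is thus simply to rewrite the DLP argument replacing every occurrence of $d(X_0,X_n)$ by $\max_{k\leq n}d(X_0,X_k)$ and every estimate on $\|M_n-M_0\|^2$ by the Doob-enhanced estimate on $\max_{k\leq n}\|M_k-M_0\|^2$.
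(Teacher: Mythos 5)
The high-level intuition here is sound — the maximal Markov-type bound does follow from the non-maximal one because the DLP argument is ultimately martingale-based and martingale maximal inequalities cost only a constant factor — and this is essentially what the paper does. However, your description of \emph{how} the DLP proof works contains a genuine error that would sink the proposal as written.

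You assert that the DLP embedding ``is a genuine Lipschitz embedding of $(\mathcal V(G),\operatorname{dist}_\omega^G)$ into $L^2$'' and that the pointwise comparison $\operatorname{dist}_\omega^G(X_0,X_k)\leq c\|M_k-M_0\|$ holds for a single martingale $(M_k)$. No such embedding exists: planar graph metrics (already complete binary trees, which are planar) have unbounded distortion into Hilbert space, by Bourgain's lower bound. If such a bi-Lipschitz embedding existed, Markov type~$2$ for planar graphs would be an immediate corollary of Ball's theorem and the DLP result would be trivial. The actual DLP proof instead uses a \emph{family} of maps and martingales indexed by scales (the ``threshold embedding'' idea), and the comparison to graph distance is distributional across scales rather than pointwise for any single embedding. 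So the crucial step in your plan — ``verify that the DLP embedding\ldots also has this pointwise Lipschitz property'' — cannot be carried out, and the single-Doob-application you propose has nothing to latch onto.

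What the paper actually does is closer in spirit to your plan but operates at the right level of generality: it observes that the key probabilistic estimate in DLP, their Lemma~2.3, already controls $\sup_{\xi}\mathbb P\bigl(\max_{1\le t\le n}\|M^\xi_t - M^\xi_0\|\ge y\bigr)$ (not merely the endpoint), because the final displayed inequality in that proof is precisely a maximal-martingale bound applied to the whole family $\{M^\xi\}$. Once Lemma~2.3 is upgraded to its maximal form, one goes through the proof of DLP's Theorem~3.1 inserting $\max_{1\le m\le n}$ in the obvious places. To repair your argument you should replace the (nonexistent) single Lipschitz embedding and single martingale with the scale-indexed family $\{M^\xi\}_{\xi\in I}$ from DLP Lemma~2.3, and apply Doob (or, as DLP already do, the appropriate martingale maximal inequality) \emph{inside} that lemma rather than to a global embedded process. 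The rest of your bookkeeping remark — that no step in DLP conditions on the terminal state or otherwise privileges $k=n$ — is correct and is the real content of the claim, but it must be checked against the multi-scale structure, not against a hypothetical global embedding.
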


\begin{proof}[Proof of Proposition \ref{prop:maximalDLP}]
We give only a brief indication of the straightforward modifications to the proof of~\cite{dlp-markov-type} in order to deduce Proposition \ref{prop:maximalDLP} rather than Theorem~\ref{thm:DLP}. 
The proof of~\cite[Lemma 2.3]{dlp-markov-type} in fact establishes the maximal version of that lemma, in which the $\sup_{\xi \in I}\mathbb{P}(\|M^\xi_n-M^\xi_0\|\geq y)$ appearing in the integrand is replaced by $\sup_{\xi \in I}\mathbb{P}(\max_{1\leq t \leq n}\|M^\xi_t-M^\xi_0\|\geq y)$. Indeed, this stronger inequality appears as the final displayed inequality of the proof. (Note that there is a typo in this inequality, namely a factor of $y^{p-1}$ is missing from the integrand.) 
Once this maximal version of Lemma 2.3 is established, it is a simple matter to go through the proof of~\cite[Theorem 3.1]{dlp-markov-type}, adding maxima where appropriate and replacing the application of the original Lemma 2.3 with the maximal version.
\end{proof}

Rather than applying Theorem \ref{thm:DLP} and Proposition \ref{prop:maximalDLP} directly, although doing so is certainly possible, we will instead use them to deduce the following diffusivity estimate for random walks on (possibly) \emph{infinite}, \emph{hyperfinite}, unimodular random rooted planar graphs. We recall that a \textbf{percolation} on a unimodular random rooted graph $(G, \BB v)$ is a random labelling $\eta$ of the edge set of $G$ by elements of $\{0,1\}$ such that the resulting edge-labelled graph $(G,\eta,\BB v)$ is unimodular. 
We think of the percolation $\eta$ as a random subgraph of $G$, where an edge is labeled $1$ if it is included in the subgraph and $0$ otherwise, and denote the connected component of $\BB v$ by $K_\eta(\BB v)$. We say that a percolation is \textbf{finitary} if $K_\eta(\BB v)$ is almost surely finite, and say that a unimodular random rooted graph  $(G,\BB v)$ is \textbf{hyperfinite} if there exists an increasing sequence of finitary percolations $(\eta_n)_{n\geq1}$ on $(G ,\BB v)$ such that $\bigcup_{n\geq 1} K_{\eta_n} (\BB v) = \mcl V(G)$ almost surely. All these definitions extend naturally to vertex-weighted unimodular random rooted graphs, see \cite[Section 3.3]{angel-hyperbolic} for more detail. 

It is a corollary of the Lipton-Tarjan planar separator theorem \cite{LT80} that Benjamini-Schramm limits of finite planar graphs are always hyperfinite, see  \cite[Theorem 3.6]{angel-hyperbolic} and the proof of \cite[Corollary 4.5]{angel-hyperbolic}. As a consequence, all the graphs we consider in this paper are hyperfinite. (In fact, a unimodular random planar map is hyperfinite if and only if it is a Benjamini-Schramm limit of finite planar maps \cite{angel-hyperbolic}.) See \cite{angel-hyperbolic} for further background on these and related notions. 

\begin{cor}
\label{cor:UnimodularMarkovType}
Let $(G,\BB v)$ be a hyperfinite, unimodular random rooted graph with $\mathbb{E}[\deg(\BB v)]<\infty$ that is almost surely planar, and let $\omega$ be a unimodular vertex weighting of $G$. Then 
\begin{equation}
\label{eq:unimodularDLP}
\mathbb{E}\left[ \deg(\BB v) \max_{1\leq m \leq n} \op{dist}_\omega^G(\BB v,X_m^G)^2\right]\leq C^2 n \mathbb{E}\left[\deg(\BB v) \omega(\BB v)^2\right]
\end{equation}
for every $n\geq 0$, where $C$ is the universal constant from Proposition \ref{prop:maximalDLP}.
\end{cor}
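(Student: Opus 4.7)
The plan is to deduce the corollary from Proposition~\ref{prop:maximalDLP} via a hyperfinite exhaustion, combined with the standard correspondence between unimodularity and reversibility in order to place the root at a stationary-distributed vertex of each finite cluster. First, I would bias the law of $(G,\omega,\BB v)$ by $\deg(\BB v)$ to obtain a reversible random vertex-weighted graph (call this law $\tilde{\mathbb{P}}$), under which the walk $X$ is a stationary sequence. Given any finitary unimodular percolation $\eta$ with cluster $K := K_\eta(\BB v)$, a short mass transport argument shows that under $\tilde{\mathbb{P}}(\,\cdot\mid G,\omega,\eta)$ the root $\BB v$ is distributed on $\mcl V(K)$ according to $\pi(v) := \deg(v)/\sum_{u\in \mcl V(K)}\deg(u)$.

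For such a fixed $\eta$, I would introduce the cluster-confined Markov chain $Y$ on $\mcl V(K)$ by the following rule: from $v\in \mcl V(K)$, propose a uniform random $G$-neighbor $w$; move to $w$ if $w\in \mcl V(K)$, and otherwise stay at $v$. This chain is reversible on $\mcl V(K)$ with stationary measure $\pi$, and it admits a coupling with $X$ via shared proposals under which $X_m = Y_m$ for every $m < \tau$, where $\tau := \min\{m\geq 1 : X_m \notin \mcl V(K)\}$. Since $K$ is a finite planar graph and $Y_0 = \BB v\sim \pi$ under $\tilde{\mathbb{P}}(\,\cdot\mid G,\omega,\eta)$, Proposition~\ref{prop:maximalDLP} applies conditionally on $(G,\omega,\eta)$ to give
\[\tilde{\mathbb{E}}\!\left[\max_{1\leq m \leq n}\op{dist}_\omega^K(\BB v, Y_m)^2 \,\Big|\, G,\omega,\eta\right] \leq C^2 n\, \tilde{\mathbb{E}}\!\left[\op{dist}_\omega^K(Y_0, Y_1)^2 \,\Big|\, G,\omega,\eta\right].\]
A one-step calculation based on the detailed-balance identity $\pi(v)P(v,w) = \pi(w)P(w,v) = 1/\sum_u \deg(u)$ (summing over unordered edges of $K$ and using $(\omega(v)+\omega(w))^2 \leq 2(\omega(v)^2+\omega(w)^2)$) bounds the right-hand side by $\tilde{\mathbb{E}}[\omega(\BB v)^2 \mid G,\omega,\eta]$.

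Taking $\tilde{\mathbb{E}}$-expectations, using the obvious inequality $\op{dist}_\omega^G \leq \op{dist}_\omega^K$, and restricting the maximum on the left-hand side to the event $\{\tau > n\}$ (on which $X_m = Y_m$ for every $m \leq n$ by the coupling) yields
\[\tilde{\mathbb{E}}\!\left[\max_{1\leq m \leq n}\op{dist}_\omega^G(\BB v, X_m)^2 \,\mathbbm{1}_{\tau > n}\right] \leq C^2 n\, \tilde{\mathbb{E}}[\omega(\BB v)^2].\]
Applying this with an increasing sequence $(\eta_N)_{N\geq 1}$ of finitary unimodular percolations whose clusters exhaust $\mcl V(G)$ almost surely---which exists since $(G,\BB v)$ is hyperfinite---makes $\tau_{\eta_N} \to \infty$ almost surely, so monotone convergence removes the indicator. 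Unbiasing by multiplying both sides by $\mathbb{E}[\deg(\BB v)]$ then gives~\eqref{eq:unimodularDLP}.

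The main obstacle is that Proposition~\ref{prop:maximalDLP} requires the chain to start from its stationary distribution, whereas $X$ starts at the specified root $\BB v$; this is resolved by the degree-biasing identity together with the unimodular uniform-rerooting property of finite clusters, which jointly place $\BB v$ at a $\pi$-typical vertex of each finite cluster under $\tilde{\mathbb{P}}$. A related point of care is the design of the cluster-confined chain $Y$: the ``stay put'' self-loops are essential so that the stationary measure is proportional to $\deg_G$ rather than $\deg_K$, which is precisely what allows $Y$ to be coupled with $X$ via shared neighbor proposals up to the exit time $\tau$.
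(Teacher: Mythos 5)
Your proof is correct, and at the structural level it mirrors the paper's: hyperfinite exhaustion, the mass-transport/reversibility fact that conditionally on the cluster the degree-biased root sits at the stationary measure, an application of Proposition~\ref{prop:maximalDLP} on each finite cluster, and a limit. Where you differ is in the choice of finite auxiliary chain and the resulting limit argument, and the difference is a genuine technical improvement. The paper runs simple random walk on the \emph{induced subgraph} $G_N$ of $K_{\eta_N}(\BB v)$, biases by $\deg^{G_N}(\BB v)$, first truncates $\omega$ to $[0,1]$, and then passes to the limit via dominated convergence (which requires an implicit coupling of the $G_N$-walk with the $G$-walk to get a.s.\ convergence of the integrand). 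You instead use the ``reflected'' chain $Y$ that proposes a uniform $G$-neighbor and stays put when the proposal falls outside $K$; this chain is reversible with stationary measure proportional to $\deg^G$ restricted to $\mcl V(K)$, so the degree-biasing is by $\deg^G(\BB v)$ throughout, the coupling with $X$ is exact up to the cluster exit time $\tau$, and the limit becomes a monotone convergence argument with no need to truncate $\omega$ first. Your one-step bound $\pi(v)P(v,w)d(v,w)^2$ summed over ordered pairs gives $\tfrac1Z\sum_v\deg^K(v)\omega(v)^2\le\tfrac1Z\sum_v\deg^G(v)\omega(v)^2$, which is exactly $\tilde{\mathbb{E}}[\omega(\BB v)^2\mid G,\omega,\eta]$, so the constant in \eqref{eq:unimodularDLP} comes out correctly. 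This is a nice, clean variant of the paper's argument.
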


It is an immediate consequence of Corollary \ref{cor:UnimodularMarkovType} that if $(G,\omega,\BB v)$ is an invariantly amenable, reversible, vertex-weighted random rooted graph that is almost surely planar then 
\begin{equation}
\label{eq:reversibleDLP}
\mathbb{E}\left[ \max_{1\leq m \leq n} \op{dist}_\omega^G(\BB v,X_m^G)^2\right]\leq C^2 n \mathbb{E}\left[ \omega(\BB v)^2\right].
\end{equation}
Indeed, this follows by applying Corollary~\ref{cor:UnimodularMarkovType} to the $\deg^{-1}(\BB v)$-biased version of $(G,\omega,\BB v)$, which is unimodular and hyperfinite.

\begin{proof}  
It suffices to consider the case that $\omega$ is almost surely bounded by some constant; the general case follows by truncating and applying the monotone convergence theorem. By scaling, we may assume without loss of generality that all the weights are in $[0,1]$ almost surely. 

Since $(G,\BB v)$ is hyperfinite, $(G,\omega, \BB v)$ is also. That is, there exists an increasing sequence of finitary percolations $(\eta_N)_{N\geq 1}$ on $(G,\omega,\BB v)$ such that $\bigcup_{N\geq1} K_{\eta_N}(\BB v) = V$ almost surely \cite[Lemma 3.2]{angel-hyperbolic}. Let $(G_N,\omega_N,\BB v)$ be the subgraph of $G$ induced by $K_{\eta_N}(\BB v)$, together with the restriction of $\omega_N$ to $K_{\eta_N}(\BB v)$. It follows by a well-known application of the mass-transport principle \cite[Lemma 3.1]{angel-hyperbolic} that conditional on the isomorphism class of $(G_N,\omega_N)$, the root $\BB v$ is uniformly distributed on the vertex set of $G_N$. Thus, if we bias the law of $(G_N,\omega_N,\BB v)$ by the degree of $\BB v$ in $G_N$, then, conditional on the isomorphism class of $(G_N,\omega_N)$, $\BB v$ is distributed according to the stationary measure of the random walk on $G_N$. 
Applying Proposition \ref{prop:maximalDLP} we obtain that
\[
\mathbb{E}\left[ \deg^{G_N}(\BB v) \max_{1 \leq m \leq n}\op{dist}_{\omega_N}^{G_N}(\BB v,X_m^{G_N})^2\right]\leq C^2 n \mathbb{E}\left[\deg^{G_N}(\BB v) \omega_N(\BB v)^2\right]
\]
for every $n,N\geq 1$.
Since the two random variables we are taking the expectations of are bounded by the integrable random variables $n^2 \deg(\BB v)$ and $\deg(\BB v)$ respectively, we can take $N\to\infty$ and apply the dominated convergence theorem to deduce the claimed inequality.
\end{proof}
 
\subsection{Liouville quantum gravity}  
\label{sec-lqg-prelim}

The Gaussian free field (GFF) is the canonical random distribution (generalized function) on a domain $D\subset \BB C$. We assume that the reader is familiar with the GFF and refer to~\cite{shef-gff,ss-contour,ig1,ig4,pw-gff-notes} for background. 

For $\gamma \in (0,2)$, a \emph{$\gamma$-Liouville quantum gravity (LQG)} surface is a random surface described by some variant $h$ of the GFF on a domain $D\subset \BB C$ whose Riemannian metric tensor is given formally by $e^{\gamma h(z)} \,dx \otimes dy$, where $dx \otimes dy$ is the Euclidean metric tensor. This definition does not make rigorous sense since the GFF is a distribution, not a function.

However, one can, to an extent, make rigorous sense of $\gamma$-LQG surfaces via various regularization procedures. It was shown in~\cite{shef-kpz} that one can define the \emph{$\gamma$-LQG area measure} $\mu_h$ associated with a $\gamma$-LQG surface by the formula
\eqb \label{eqn-mu_h-def}
\mu_h = \lim_{\ep\rta 0} e^{\gamma h_\ep(z)} \,dz
\eqe 
where $dz$ is Lebesgue measure, $h_\ep(z)$ is the circle average of $h$ over the circle $\bdy B_\ep(z)$ (see~\cite[Section 3.1]{shef-kpz} for the construction and basic properties of circle averages), and the limit takes place a.s.\ with respect to the Prokhorov topology as $\ep\rta 0$ along powers of 2. 
A similar regularization procedure yields the \emph{$\gamma$-LQG boundary length measure} $\nu_h$ which is defined on certain curves including $\bdy D$ and SLE$_\kappa$-type curves for $\kappa=16/\gamma^2$ that are independent from $h$~\cite{shef-zipper,benoist-lqg-chaos}. There is also a more general theory of regularized measures of this type, called \emph{Gaussian multiplicative chaos}, which was initiated by Kahane~\cite{kahane} and is surveyed in~\cite{rhodes-vargas-review,berestycki-gmt-elementary}. 

The measures $\mu_h$ and $\nu_h$ satisfy a conformal covariance formula~\cite[Proposition 2.1]{shef-kpz}: if $f: \wt D \rta D$ is a conformal map, $h$ is some variant of the GFF on $D$ (such as an embedding of the $\gamma$-quantum cone, defined below) and 
\eqb \label{eqn-lqg-coord}
\wt h = h \circ f + Q\log |f'| \quad \op{for} \quad Q = \frac{2}{\gamma} + \frac{\gamma}{2} 
\eqe
then $f_* \mu_{\wt h} = \mu_h$ and $f_* \nu_{\wt h} = \nu_h$. 

We think of two pairs $(D,h)$ and $(\wt D ,\wt h)$ which are related as in~\eqref{eqn-lqg-coord} as two different parameterizations of the same $\gamma$-LQG surface. 
This leads us to define a $\gamma$-LQG surface to be an equivalence class of pairs $(D,h)$ consisting of a domain $D$ and a distribution $h$ on $D$ (which we will always take to be random, and indeed to be some variant of the GFF) with two such pairs $(D,h)$ and $(\wt D , \wt h)$ declared to be equivalent if they are related by a conformal map as in~\eqref{eqn-lqg-coord}. More generally, we can define a \emph{$\gamma$-LQG surface with $k \in \BB N$ marked points} to be an equivalence class of $k+2$-tuples $(D,h,z_1,\dots,z_k)$ where $D$ is an open subset of $\BB C$, $h$ is a distribution on $D$, and $z_1,\dots,z_k \in D\cup \bdy D$, with two such $k+2$-tuples declared to be equivalent if they differ by a conformal map $f$ as in~\eqref{eqn-lqg-coord} which takes the marked points for one surface to the corresponding marked points for the other surface.

We call a particular choice of equivalence class representation $(D,h,z_1,\dots,z_k)$ an \emph{embedding} of the surface into $(D,z_1,\dots,z_k)$. 

\subsubsection{The $\gamma$-quantum cone}
\label{sec-cone-prelim}

The main type of $\gamma$-LQG surface that we will be interested in in this paper is the \emph{$\gamma$-quantum cone}, which was first defined in~\cite[Definition~4.10]{wedges}.  The $\gamma$-quantum cone is a doubly marked $\gamma$-LQG surface which can be represented by $(\BB C ,h , 0,\infty)$, where the distribution $h$ is a slightly modified version of $\wt h -\gamma \log |\cdot|$, where $\wt h$ is a whole-plane GFF. Roughly speaking, the $\gamma$-quantum cone describes the local behavior of a GFF near a point sampled from the $\gamma$-LQG measure (this follows from~\cite[Lemma A.10]{wedges}, which says that the GFF has a $\gamma$-log singularity near such a point, and~\cite[Proposition 4.13(ii)]{wedges}). The precise definition of the embedding $h$ will be important for our purposes, so we give it here.  

Let $A : \BB R \to \BB R$ be the process $A_t:= B_t + \gamma t$, where $B_t$ is a standard linear Brownian motion conditioned so that $ B_t  - (Q-\gamma) t > 0$ for all $t< 0$ 
(see~\cite[Remark 4.4]{wedges} for an explanation of how to make sense of this singular conditioning as a Doob transform). In particular, $(B_t)_{t\geq 0}$ is an unconditioned standard linear Brownian motion. Let $h$ be the random distribution such that if $h_r(0)$ denotes the circle average of $h$ on $\partial B_r(0)$ (as in~\eqref{eqn-mu_h-def}), then $t\mapsto h_{e^{-t}}(0)$ has the same law as the process $A$; and $h - h_{|\cdot|}(0)$ is independent from $h_{|\cdot|}(0)$ and has the same law as the analogous process for a whole-plane GFF.\footnote{Here, $h-h_{|\cdot|}(0)$ is the distribution obtained by subtracting the continuous function $z\mapsto h_{|z|}(0)$ from the distribution $h$. This distribution has the property that its average over every circle centered at the origin is 0.}

The above definition only gives us one possible embedding of the $\gamma$-quantum cone, which we call the \emph{circle average embedding}. One obtains an equivalent $\gamma$-LQG surface by replacing $h$ with $h(a\cdot) + Q\log |a|$ for any $a\in \BB C$, with $Q$ as in~\eqref{eqn-lqg-coord}. The circle average embedding is characterized by the properties that $\sup_{r>0}\{ h_r(0) + Q\log r =0\} =1$ (which follows since $A_0 = 0$) and that $h|_{\BB D}$ agrees in law with  
$(h'-\gamma\log|\cdot| - h'_1(0))|_{\BB D}$, where $h'$ is a whole-plane GFF and $h'_1(0)$ is the circle average of $h'$ over $\partial \mathbb{D}$. 
  
The $\gamma$-quantum cone possesses a scale invariance property which is different from the scale invariance of the law of the whole-plane GFF. To state this property, define
\eqb \label{eqn-mass-hit-time}
R_b  = R_b(h) := \sup\left\{ r > 0 : h_r(0) + Q \log r = \frac{1}{\gamma} \log b \right\} ,\quad \forall b > 0 .
\eqe 
That is, $R_b$ gives the largest radius $r > 0$ so that if we scale spatially by the factor $r$ and apply the change of coordinates formula~\eqref{eqn-lqg-coord}, then the average of the resulting field on $\partial \BB D$ is equal to $\gamma^{-1} \log b$.  
Note that $R_0 = 1$ by the definition of the circle average embedding. It is easy to see from the definition of $h$ (and is shown in~\cite[Proposition~4.13(i)]{wedges}) that 
\eqb \label{eqn-cone-scale}
h \eqD h(R_b \cdot) + Q \log R_b -  \frac{1}{\gamma} \log b ,\quad \forall b > 0 .
\eqe
Furthermore, for $b_2>b_1>0$, $-\log(R_{b_2}/R_{b_1})$ has the same law as the first time that a standard linear Brownian motion with negative linear drift $-(Q-\gamma) t$ hits $\frac{1}{\gamma} \log(b_2/b_1)$. 

By~\eqref{eqn-lqg-coord}, if we let $h^b$ be the field on the right side of~\eqref{eqn-cone-scale}, then a.s.\ $\mu_{h^b}(A) = b \mu_h(R_b^{-1} A)$ for each Borel set $A\subset \BB C$. In particular, $\mu_h(B_{R_b})$ is typically of order $b$. We will frequently use the following elementary estimate for $R_b$ (see~\cite[Lemma 2.1]{gms-tutte} for a proof).

\begin{lem}[\!\!\cite{gms-tutte}] \label{lem-cone-hit-tail}
There is a constant $a = a( \gamma)  > 0$ such that for each $b_2>b_1>0$ and each $C > 1$,
\begin{align} \label{eqn-cone-hit-tail}
 \BB P\left[ C^{-1} (b_2/b_1)^{ \tfrac{1 }{\gamma (Q-\gamma) }} \leq R_{b_2}/R_{b_1} \leq C (b_2/b_1)^{ \tfrac{1 }{\gamma (Q-\gamma)}} \right] 
  \geq 1  -   3 \exp\left( - \frac{  a  (\log C)^2 }{   \log (b_2/b_1) +  \log C   } \right)  .
\end{align}
\end{lem}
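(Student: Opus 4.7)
The plan is to reduce the claim to a standard concentration inequality for the first-passage time of a one-dimensional Brownian motion with drift, using the description of $R_b$ as a Brownian hitting time stated in the paragraph preceding the lemma.

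By that description, $\log(R_{b_2}/R_{b_1})$ has, up to sign, the law of a first-passage time $\tau$ for a standard linear Brownian motion with drift $\pm(Q-\gamma)$ of a level at height $\pm\gamma^{-1}\log(b_2/b_1)$. In either sign convention $\tau$ has mean $\BB E\tau = \log(b_2/b_1)/(\gamma(Q-\gamma))$, which matches the typical scale $R_{b_2}/R_{b_1} \asymp (b_2/b_1)^{1/(\gamma(Q-\gamma))}$ appearing in the lemma. Thus the claim reduces to the concentration estimate
\begin{equation*}
\BB P\bigl[|\tau - \BB E\tau| > \log C\bigr] \leq 3 \exp\!\left(-\frac{a(\log C)^2}{\log(b_2/b_1) + \log C}\right)
\end{equation*}
for some $a = a(\gamma) > 0$.

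I would handle the two tails separately. For the upper tail, $\{\tau > \BB E\tau + \log C\}$ is contained in the event that a Brownian motion $W$ satisfies $W_{\BB E\tau + \log C} < -(Q-\gamma)\log C$, and the Gaussian tail bound $\BB P(W_t < -x) \leq e^{-x^2/(2t)}$ yields probability at most $\exp\!\bigl(-(Q-\gamma)^2(\log C)^2/(2(\BB E\tau + \log C))\bigr)$. For the lower tail, which is vacuous when $\log C \geq \BB E\tau$, the event $\{\tau < \BB E\tau - \log C\}$ is contained in $\{\sup_{t \leq \BB E\tau - \log C} W_t \geq (Q-\gamma)\log C\}$, and the reflection principle gives the bound $2\exp\!\bigl(-(Q-\gamma)^2(\log C)^2/(2\BB E\tau)\bigr)$. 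Since $\BB E\tau$ is a $\gamma$-dependent multiple of $\log(b_2/b_1)$, both denominators are comparable, up to $\gamma$-dependent constants, to $\log(b_2/b_1) + \log C$, and combining the two bounds yields the claim for a suitable $a = a(\gamma) > 0$.

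The argument is routine once the reduction to a Brownian hitting time is in hand. The only mildly subtle point is arranging the denominator of the exponent to be $\log(b_2/b_1) + \log C$ rather than $\log(b_2/b_1)$; this matters in the regime $\log C \gg \log(b_2/b_1)$, where the tail is governed by the fluctuation of $W$ over a time interval of order $\log C$ rather than $\BB E\tau$, and it arises automatically in the upper-tail computation from evaluating $W$ at time $\BB E\tau + \log C$.
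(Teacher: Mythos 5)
Your proof is correct, and since the paper cites this lemma from~\cite{gms-tutte} without reproducing a proof, there is no in-text argument to compare against. Your reduction to a first-passage-time tail bound for drifted Brownian motion is precisely the standard argument one would expect: identify $\log(R_{b_2}/R_{b_1})$ as the hitting time $\tau$ (with mean $\log(b_2/b_1)/(\gamma(Q-\gamma))$), observe that the event in question is $\{|\tau-\BB E\tau|\le\log C\}$, bound the upper tail by evaluating the Brownian motion at the deterministic time $\BB E\tau+\log C$ and applying the Gaussian tail, and bound the lower tail via the reflection principle. You are also right to flag the one non-obvious point, namely that the denominator $\log(b_2/b_1)+\log C$ rather than $\log(b_2/b_1)$ is what one gets naturally from the upper-tail computation and is what makes the bound meaningful when $\log C \gg \log(b_2/b_1)$; the constants of proportionality between $\BB E\tau$ and $\log(b_2/b_1)$ are then absorbed into $a=a(\gamma)$ exactly as you indicate.
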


\subsection{The SLE/LQG description of the mated-CRT map}
\label{sec-peanosphere}

In this subsection we describe in more detail the relationship between mated-CRT maps and SLE-decorated Liouville quantum gravity, as alluded to in Section~\ref{sec-mated-crt-map}. Let $\gamma \in (0,2)$ and let $\kappa := 16/\gamma^2 > 4$.

\emph{Whole-plane space-filling SLE$_\kappa$} from $\infty$ to $\infty$ is a variant of SLE$_\kappa$ that fills space, even in the case $\kappa \in (4,8)$, and a.s.\ hits Lebesgue-a.e.\ point of $\BB C$ exactly once. 
This variant of SLE was first introduced in~\cite[Sections 1.2.3 and 4.3]{ig4} (see also~\cite[Section~1.4.1]{wedges}).  

For $\kappa \geq 8$, whole-plane space-filling SLE$_\kappa$ is just a two-sided variant of ordinary SLE$_\kappa$. It can be obtained from chordal SLE$_\kappa$ by ``zooming in" near a Lebesgue-typical point $z $ at positive distance from the boundary of the domain.
For $\kappa \in (4,8)$, chordal space-filling SLE$_\kappa$ is obtained from ordinary SLE$_\kappa$ by iteratively filling in the bubbles disconnected from the target point by ordinary SLE$_\kappa$-type curves to get a space-filling curve (which is not a Loewner evolution). One can then obtain whole-plane space-filling SLE$_\kappa$ by zooming in near a point at positive distance from the boundary, as in the case $\kappa\geq 8$. We will not need the precise definition of whole-plane space-filling SLE$_\kappa$ here. 

Suppose now that $\eta$ is a whole-plane space-filling SLE$_\kappa$ and $(\BB C ,h , 0 ,\infty)$ is a $\gamma$-quantum cone (Section~\ref{sec-cone-prelim}) independent from $\eta$. Let $\mu_h$ and $\nu_h$ be the associated $\gamma$-LQG area and boundary length measures. We assume that $\eta$ is parametrized in such a way that $\eta(0) = 0$ and $\mu_h(\eta([t_1,t_2])) = t_2-t_1$ for each $t_1,t_2\in\BB R$ with $t_1<t_2$. We define the \emph{left boundary length process} $(L_t)_{t\in\BB R}$ as follows. We set $L_0 = 0$ and for $t_1 < t_2$ we require that $L_{t_2}-L_{t_1}$ gives the $\nu_h$-length of the intersection of the left outer boundaries of $\eta([t_1,t_2])$ and $\eta([t_2,\infty))$, minus the $\nu_h$-length of the intersection of the left outer boundaries of $\eta((-\infty,t_1])$ and $\eta([t_1,t_2])$. We similarly define the right outer boundary length process $(R_t)_{t\in\BB R}$ with ``right" in place of ``left". We set $Z_t := (L_t,R_t)$. 

It is shown in~\cite[Theorem 1.9]{wedges} that $Z$ evolves as a correlated two-sided two-dimensional Brownian motion with $\op{Corr}(L_t,R_t) = -\cos(\pi\gamma^2/4)$ and in~\cite[Theorem 1.11]{wedges} that $Z$ a.s.\ determines $h$ and $\eta$, modulo rotation and scaling. 

Let $\ep >0$. It is easy to see from the definition of $Z$ that two of the \emph{cells} $\eta([x_1-\ep,x_1])$ and $\eta([x_2-\ep,x_2])$ for $x_1,x_2\in\ep\BB Z$ with $x_1<x_2$ intersect along a non-trivial connected boundary arc if and only if the mated-CRT map adjacency condition~\eqref{eqn-inf-adjacency} holds for the above Brownian motion $Z$. Therefore, the mated-CRT map $\mcl G^\ep$ constructed from $Z$ is identical to the graph with vertex set $\ep\BB Z$, with two distinct vertices $x_1,x_2\in\ep\BB Z$ connected by an edge if and only if the corresponding space-filling SLE cells $\eta([x_1-\ep,x_1])$ and $\eta([x_2-\ep,x_2])$ intersect along a non-trivial connected boundary arc (the vertices are connected by two edges if $|x_1-x_2|  > \ep$ and the corresponding cells intersect along both their left and right boundary arcs). 

Thus, we obtain an embedding of $\mcl G^\ep$ into $\BB C$ by sending $x\in\ep\BB Z$ to the point $\eta(x)$. We remark that~\cite[Theorem 1.11]{wedges} does \emph{not} give an explicit description of this embedding in terms of $Z$. It is shown in~\cite{gms-tutte} that $x\mapsto \eta(x)$ is close to the so-called Tutte embedding of $\mcl G^\ep$ (which is defined by requiring the position of each vertex to be the average of the positions of its neighbors) when $\ep$ is small. We will not need this fact here, however. 

We introduce the following notation for the subgraph of $\mcl G^\ep$ corresponding to a domain $D\subset \BB C$. 

\begin{defn}  \label{def-sg-domain} 
For $\ep  > 0$ and a set $D\subset \BB C$, we write $\mcl G^\ep(D)$ for the subgraph of $\mcl G^\ep$ induced by the set of vertices 
 $x\in\ep\BB Z$ with $\eta([x-\ep,x])\cap D\not=\emptyset$. 
\end{defn}

An important property of the embedding $x\mapsto \eta(x)$ is that the maximal size of the cells that intersect a fixed Euclidean ball is of order $\ep^{2/(2+\gamma)^2  + o_\ep(1)}$ with high probability as $\ep\rta 0$, and in particular tends to 0 as $\ep\rta 0$. A quantitative version of this property is given by the following lemma, which follows from basic SLE/LQG estimates (see~\cite[Lemma~2.4]{gms-harmonic} for a proof).

\begin{lem}[\!\cite{gms-harmonic}] \label{lem-max-cell-diam} 
Suppose we are in the setting described just above.
For each $q \in \left( 0 , \tfrac{2}{(2+\gamma)^2} \right)$, each $r\in (0,1)$, and each $\ep \in (0,1)$, 
\eqb \label{eqn-max-cell-diam}
\BB P\left[ \op{diam}\left( \eta([x-\ep,x] \right) \leq \ep^q ,\: \forall x \in \mcl G^\ep( B_r(0) ) \right] \geq 1 -  \ep^{\alpha(q,\gamma) + o_\ep(1)}  ,
\eqe  
where the rate of the $o_\ep(1)$ depends only on $q$, $r$, and $\gamma$ and
\eqb \label{eqn-cell-diam-exponent}
\alpha(q,\gamma) := \frac{q}{2\gamma^2  } \left( \frac{1}{q} - 2 - \frac{\gamma^2}{2} \right)^2 - 2 q  .
\eqe 
\end{lem}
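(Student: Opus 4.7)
The plan is to reduce the lemma to a tail estimate for the $\gamma$-LQG measure of small Euclidean balls, which can then be handled via the Gaussian tail for the GFF circle average. This follows the strategy of~\cite[Lemma 2.4]{gms-harmonic}.

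First I would cover $B_r(0)$ by a net $\{z_i\}_{i=1}^N$ of $N = O(\ep^{-2q})$ points at spacing of order $\ep^q$. The key geometric observation is that if some cell $\eta([x-\ep,x])$ with $x \in \mcl G^\ep(B_r(0))$ has Euclidean diameter exceeding $\ep^q$, then because the cell is connected and satisfies $\mu_h(\eta([x-\ep,x])) = \ep$, one can find a net point $z_i$ within Euclidean distance $O(\ep^q)$ of the cell such that $\mu_h(B_{C\ep^q}(z_i)) \leq C\ep$ for an absolute constant $C$. Informally, a large cell forces the LQG measure to be atypically thin on a Euclidean ball at the corresponding scale, because a cell of mass exactly $\ep$ can only have large Euclidean diameter where the field $h$ is locally unusually small.

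Next I would bound the probability that $\mu_h(B_{\ep^q}(z_i)) \leq C\ep$ at a fixed net point $z_i$. By the scale invariance of the $\gamma$-quantum cone captured by~\eqref{eqn-cone-scale} together with Lemma~\ref{lem-cone-hit-tail}, and the LQG coordinate change~\eqref{eqn-lqg-coord}, this event can be recast as a condition on the GFF circle average at scale $\ep^q$, essentially requiring that $h_{\ep^q}(z_i) + Q\log \ep^q \leq \gamma^{-1}\log \ep + O(1)$. Since $h_{\ep^q}(z_i)$ is approximately Gaussian with variance $q|\log \ep|$, a direct tail computation, using the identity $2 + \gamma^2/2 = \gamma Q$, yields probability at most $\ep^{\frac{q}{2\gamma^2}(1/q - 2 - \gamma^2/2)^2 + o_\ep(1)}$. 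Taking a union bound over the $N = O(\ep^{-2q})$ net points costs a factor of $\ep^{-2q}$ and produces the exponent $\alpha(q,\gamma) = \frac{q}{2\gamma^2}(1/q - 2 - \gamma^2/2)^2 - 2q$. The restriction $q < 2/(2+\gamma)^2$ is precisely what makes $\alpha(q,\gamma) > 0$, as one checks by plugging in $q = 2/(2+\gamma)^2$: both terms then equal $2q$.

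The main technical obstacle is the geometric reduction in the first step, namely translating the event ``some SLE cell has large Euclidean diameter'', a statement about $\eta$, into the event ``some small ball has atypically small $\mu_h$-measure'', a statement about $h$ alone. The cleanest approach exploits the independence of $\eta$ and the field $h$ given in~\cite[Theorem 1.9]{wedges}: conditioning on $\eta$ and averaging over $h$ reduces the question to GMC moment estimates, which can be combined with the fact that space-filling SLE cells have reasonable ``roundness'' (see~\cite[Section 3]{ghm-kpz}). A minor secondary issue is relating the fixed circle-average embedding of the cone to the spatial scaling of the cells, for which Lemma~\ref{lem-cone-hit-tail} provides the required moderate-deviation control.
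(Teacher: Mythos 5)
Your approach matches the paper's (which appears in commented-out form in the source and is essentially the proof cited from \cite{gms-harmonic}): combine a uniform lower bound on $\mu_h$-mass of small Euclidean balls — driven by a Gaussian tail for the GFF circle average — with the roundness of space-filling SLE cells, to conclude that a cell of diameter $\geq\ep^q$ and quantum mass exactly $\ep$ would force an atypically thin ball. Your exponent bookkeeping is correct: the $\frac{q}{2\gamma^2}(1/q - 2 - \gamma^2/2)^2$ from the tail and the $-2q$ from the union bound both match (these are packaged together in the cited uniform estimate \cite[Lemma A.1]{gms-tutte} as the exponent $p^2/(2\gamma^2) - 2$ with $\delta = \ep^q$, $p = 1/q - 2 - \gamma^2/2$), and the threshold $q = 2/(2+\gamma)^2$ making $\alpha$ vanish is right.

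However, the geometric reduction in your step 2 is wrong as stated. You assert that if a cell has diameter exceeding $\ep^q$ and mass $\ep$, then ``because the cell is connected'' there is a net point $z_i$ near the cell with $\mu_h(B_{C\ep^q}(z_i)) \leq C\ep$. Connectedness plus the mass constraint does not yield this: a cell shaped like a long thin tube has diameter $\geq \ep^q$ and mass exactly $\ep$, but any ball of radius $\asymp\ep^q$ nearby contains a large region \emph{outside} the cell, whose $\mu_h$-mass can far exceed $\ep$. What you actually need is the roundness estimate from \cite[Proposition 3.4 and Remark 3.9]{ghm-kpz}, which you relegate to the final paragraph: a segment of $\eta$ of diameter $\geq\ep^q$ contains (with probability $1 - o_\ep^\infty(\ep)$) a Euclidean ball of radius $\geq\ep^{\tilde q}$ for any fixed $\tilde q>q$, and it is \emph{this contained} ball that has $\mu_h$-mass $\leq\ep$. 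Roundness is not a secondary cleanup; it is the ingredient that makes the reduction true. Relatedly, ``conditioning on $\eta$ and averaging over $h$'' is awkward since $\eta$'s parametrization is by $\mu_h$-mass and so depends on $h$. The cleaner route, as in the commented-out proof, is to intersect two unconditional high-probability events: (a) every ball of radius $\ep^{\tilde q}$ in $\BB D$ has $\mu_h$-mass exceeding $\ep$; and (b) every segment of $\eta$ in $\BB D$ of diameter $\geq\ep^q$ contains a ball of radius $\geq\ep^{\tilde q}$. On the intersection a large cell yields a contradiction with $\mu_h(\text{cell})=\ep$; sending $\tilde q\to q$ gives the stated $\alpha(q,\gamma)$.
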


We note that the exponent $\alpha(q,\gamma)$ from~\eqref{eqn-cell-diam-exponent} tends to $\infty$ as $q \rta 0$ (in fact, this is the only property of this exponent that we will use).

\subsection{Strong coupling with the mated-CRT map}
\label{sec-coupling}

A key tool in this paper is a coupling result for any one of the first four random planar maps $(M,\BB e)$ listed in Section~\ref{sec-main-results} with the mated-CRT map $\mcl G^\ep$ for $\ep > 0$, which was introduced in~\cite{ghs-map-dist}. This coupling is based on a mating-of-trees bijection that encodes $(M,\BB e, \mcl S)$---for an appropriate statistical mechanics model $\mcl S$ on $M$---by means of a two-dimensional random walk $\mcl Z$ with i.i.d.\ increments (the particular step distribution depends on the model). The coupling of $M$ and $\mcl G^\ep$ is then obtained via a strong coupling of the random walk $\mcl Z$ with the Brownian motion used to define the mated-CRT map~\cite{zaitsev-kmt}. Let us now discuss the statistical mechanics model that we will consider on each of our random planar maps. 
\begin{enumerate}
\item In the case of the UIPT of type II, $\mcl S$ is critical ($p=1/2$) site percolation on $M$, or equivalently a uniform depth-first-search tree on $M$.  This bijection was introduced in~\cite{bernardi-dfs-bijection} in the setting of a uniform depth-first-search tree on a finite triangulation. The paper~\cite{bhs-site-perc} explains the connection to site percolation and the (straightforward) extension to the UIPT.
\item In the case of the infinite spanning-tree decorated planar map, $\mcl S$ is a uniform spanning tree on $M$~\cite{mullin-maps,bernardi-maps,shef-burger}.
\item For infinite bipolar-oriented planar maps of various types, $\mcl S$ is a uniformly chosen orientation on the edges of $M$ with no source or sink (i.e., the source and sink are equal to~$\infty$)~\cite{kmsw-bipolar}.
\end{enumerate}
These bijections are each reviewed in~\cite{ghs-map-dist}. We will not need the precise definitions of the bijections here. 
 
In each of the above cases, we let $\mcl G^\ep$ for $\ep > 0$ be the $\gamma$-mated-CRT map with cell size $\ep$, where $\gamma$ is the LQG parameter corresponding to $M$ as listed in Section~\ref{sec-main-results}. We assume that the maps $\{\mcl G^\ep\}_{\ep>0}$ are all constructed from a common correlated two-sided two-dimensional Brownian motion $Z=(L,R)$ with correlation $-\cos(\pi\gamma^2/4)$, as in Section~\ref{sec-mated-crt-map}. 

The coupling result of~\cite{ghs-map-dist} actually gives a coupling of certain large ``almost submaps" of $\mcl G^\ep$ and $M$, which we now discuss. 
  
\begin{defn} \label{def-sg-restrict}
For an interval $I\subset \BB R$ and $\ep \in (0,1)$, we write $\mcl G_{I}^\ep$ for the subgraph of $\mcl G^\ep$ induced by the vertex set $I\cap (\ep\BB Z)$. 
\end{defn}

The analogue of Definition~\ref{def-sg-restrict} for $(M,\BB e , \mcl S)$, as given in~\cite{ghs-map-dist}, is somewhat more complicated. 
For each of the combinatorial random planar maps listed above, the mating-of-trees bijection gives rise to a mapping $\lambda $ from $\BB Z$ to the edge set of $M$. One way to see how this mapping arises is as follows. If $i\in\BB Z$, then the translated walk $\mcl Z_{\cdot +i} - \mcl Z_i$ has the same law as $\BB Z$, so that applying the bijection to this translated walk produces a rooted, decorated planar map with the same law as  $(M,\BB e , \mcl S)$. The planar map (without the statistical mechanics model) is isomorphic to $M$, but with a different choice of root edge. This root edge is $\lambda(i)$. Note that $\lambda(0) = \BB e$. 

The mapping $i\mapsto \lambda(i)$ is a bijection in the case of the UIPT and bipolar-oriented maps, and is two-to-one in the case of the case of spanning tree-weighted maps. In the terminology of~\cite{shef-burger}, the two integers corresponding to a single edge are the indices of a ``burger" and of the ``order" that consumes it.
 
Recall that a \emph{planar map with boundary} is a planar map $M$ together with a distinguished face (called the \emph{external face}). The \emph{boundary} $\bdy M$ of $M$ is the subgraph of $M$ consisting of the vertices and edges on the boundary of the external face. We say that $M$ has \emph{simple boundary} if $\bdy M$ is a a simple cycle, i.e., it is isomorphic to the cycle $\mathbb{Z}/n\mathbb{Z}$ for some $n\geq 0$.

We can use the mating-of-trees bijection to define for each interval $I = [a,b] \subset \BB R$ a planar map~$M_{I}$ with boundary $\bdy M_{I}$ associated\footnote{In fact, in~\cite{ghs-map-dist} the definition of $M_{I}$ is only given in the case $I= [-n,n]$ for $n\in\BB N$, and $M_{I}$ is denoted by $M_n$. However, by translation invariance a completely analogous definition works for an arbitrary interval $I$.} 
with the random walk increment $(\mcl Z-\mcl Z_a)|_{ I \cap \BB Z}$, which is a discrete analogue of $\mcl G_I^\ep$ from Definition~\ref{def-sg-restrict}, and is almost but not exactly equal to the submap of $M$ spanned by the edge set $\lambda(I \cap \mathbb{Z})$.
Indeed, as explained in~\cite{ghs-map-dist}, due to the possibility of pairs of vertices or edges being identified at a time after the right endpoint of $I$, we cannot in general take $M_I$ to be a subgraph of $M$ if we want Theorem~\ref{thm-map-count} below to hold. The precise definition of $M_I$ is slightly different in each of the above cases, and is given in~\cite[Section 3]{ghs-map-dist}. For our purposes, the most important property of $M_I$ is that there is an ``almost inclusion" map 
\eqb \label{eqn-inclusion-function}
\iota_{I} : M_{I} \to M \quad \text{which is injective on $M_{I} \setminus \bdy M_{I}$} .
\eqe  
So, we can canonically identify $M_{I} \setminus \bdy M_{I}$ with a subgraph of $M$. By~\cite[Remark 1.3]{ghs-map-dist}, for each interval $I$,
\eqb \label{eqn-submap-edge} 
\lambda^{-1} \Bigl( \iota_I\bigl( \mcl E M_I  \setminus \mcl E ( \bdy M_I )   \bigr) \Bigr) \subset I\cap \BB Z 
\quad \op{and} \quad 
\lambda( I\cap \BB Z ) \subset    \iota_I \left( \mcl E M_I  \right) \cup \{\lambda(\lfloor b \rfloor)\}  . 
\eqe
If $0\in I$ and $\lambda(0) \in \mcl E M_I \setminus \mcl E(\bdy M_I)$, then $M_I$ possesses a canonical root edge which is mapped to $\BB e$ by $\iota_I$. By a slight abuse of notation, we will denote this root edge by $\BB e$. 

One can also define for each interval $I\subset \BB R$ functions 
\eqb \label{eqn-peano-functions}
\phi_{I} : \mcl V(M_{I}) \to I \cap\BB Z  \quad \op{and} \quad \psi_{I} : I\cap \BB Z \to \mcl V(M_{I}) .
\eqe     
Roughly speaking, the vertex $\psi_{I}(i)$ for $i\in I$ corresponds to the $i$th step of the walk $\mcl Z $ in the bijective construction of $(M,e_0,T)$ from $\mcl Z$ and $\phi_{I}$ is ``close" to being the inverse of $\psi_{I}$. 
However, the construction of $M $ from $\mcl Z$ does not set up an exact bijection between $I\cap \BB Z$ and the vertex set of $M_{I} $, so the functions $\phi_{I}$ and $\psi_{I}$ are neither injective nor surjective. As is the case for $M_I$, the definitions of $\phi_I$ and $\psi_I$ are slightly different in each case and are given in~\cite[Section 3]{ghs-map-dist}. 
See Figure~\ref{fig-coupling-setup} for an illustration of the above objects.

 \vspace{1em}
\begin{figure}[ht!]
 \begin{center}
\includegraphics[scale=.7]{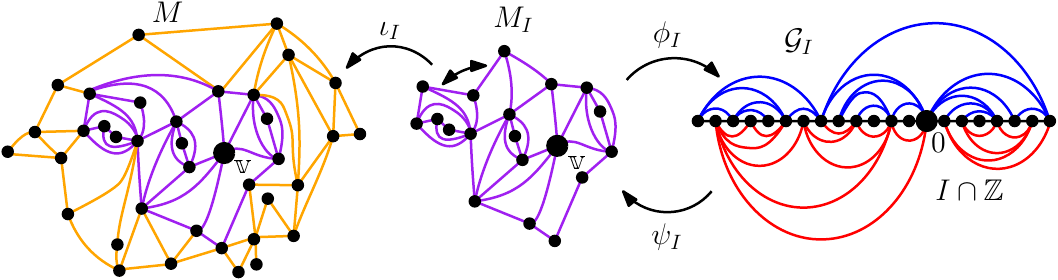} 
\caption{Illustration of the map $M_{I}$, the ``almost inclusion" map $\iota_{I} : M_{I}\rta M$, and the maps $\phi_{I} : \mcl V(M_{I}) \rta I\cap\BB Z$ and $\psi_{I} : I \cap {\BB Z} \rta  \mcl V(M_{I})  $. Note that two edges of $\bdy M_{I}$ get identified when we apply $\iota_{I}$ in the figure. Theorem~\ref{thm-map-count} tells us that the maps $\phi_I$ and $\psi_I$ are rough isometries up to polylogarithmic errors when $I\cap\BB Z$ is equipped with the graph structure coming from the mated-CRT map $\mcl G_I$. A similar figure appears in~\cite{ghs-map-dist}. 
}\label{fig-coupling-setup}
\end{center}
\vspace{-1em}
\end{figure}

The following is the coupling result that we will use in this paper. It is a trivial modification of~\cite[Theorem 1.9]{ghs-map-dist}.

\begin{thm}
\label{thm-map-count}
Let $(M,\BB e,\mathcal{S})$ be one of the random map models coupled with a statistical mechanics model that is listed at the beginning of Section \ref{sec-coupling} and let $\phi_{I}$ and $\psi_{I}$ be as above for each bounded interval $I \subset \BB R$.
There is a universal constant $p_0 > 4$ such that for each $\ep\in (0,1)$, each $n\in\BB N$, and each interval $I\subset \BB R$ with length $n$, there is a coupling of the Brownian motion $Z$ used to construct $\mcl G^\ep$ with $(M,\BB e , \mcl S)$ such that the following is true with probability $1- o_n^\infty(n)$ (at a rate depending only on the law of $(M,\BB e,\mcl S)$).
\begin{enumerate} 
\item For each edge $\{v_1,v_2\} \in  \mcl E(M_{I})$, there is a path from $ \ep \phi_{I}(v_1)$ to $ \ep \phi_{I}(v_2)$ in $\mcl G_{\ep I}^\ep$ with length at most $ (\log n)^{p_0}$.  \label{item-map-count-G}
\item For each edge $\{x_1,x_2\} \in \mcl E\mcl G^\ep_{\ep I}$, there is a path from $\psi_{I}(x_1/\ep )$ to $\psi_{I}(x_2 /\ep)$ in $M_{I}$ with length at most $(\log n)^{p_0}$.  \label{item-map-count-M} 
\item We have $\op{dist}^{M_{I}}\left(  \psi_{I}( \phi_{I}(v)) , v  \right) \leq  ( \log n)^{p_0}$ for each $v\in \mcl V(M_{I})$ and $ \op{dist}^{\mcl G_{I}}\left(\ep \phi_{I}(\psi_{I}(x/\ep)) , x  \right) \leq  (\log n)^{p_0} $ for each $x \in \ep ( I\cap\BB Z)$. \label{item-map-count-close}
\end{enumerate}
\end{thm}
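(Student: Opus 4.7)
The plan is to derive this essentially from~\cite[Theorem 1.9]{ghs-map-dist}; what needs to be added are two easy reductions that handle the more general $\ep$ and general bounded interval $I$ here, together with a recollection of the core argument from~\cite{ghs-map-dist}.

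First I would reduce to the case $\ep = 1$. The adjacency condition~\eqref{eqn-inf-adjacency} is invariant under Brownian scaling: if one rescales the time variable $t \mapsto t/c$ and uses $c^{1/2} Z_{\cdot/c}$ in place of $Z$, both infima in~\eqref{eqn-inf-adjacency} transform by the same factor $c^{1/2}$, so $\mcl G^{\ep}_{\ep I}$ is canonically isomorphic as a graph to $\mcl G^{1}_{I}$ via the bijection $x \mapsto x/\ep$ of vertex sets. Under this rescaling the Brownian motion in the definition of $\mcl G^\ep$ corresponds to a Brownian motion with the same covariance structure defining $\mcl G^1$. So it suffices to construct, for each bounded interval $I$ of length $n$, a coupling of $\mcl G^1_I$ with $(M, \BB e, \mcl S)$ for which items (\ref{item-map-count-G})--(\ref{item-map-count-close}) hold. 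Second, I would translate $I$ so that $0 \in I$ and $\lambda(0) = \BB e$; this is possible because the encoding walk $\mcl Z$ has i.i.d.\ increments (so its law is translation invariant up to re-rooting) and the driving Brownian motion $Z$ has stationary increments. After this reduction, what remains is~\cite[Theorem 1.9]{ghs-map-dist} applied to a shifted interval, with essentially no change in the proof.

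For completeness, the core argument of~\cite{ghs-map-dist} proceeds as follows. Zaitsev's strong coupling theorem~\cite{zaitsev-kmt}, a generalization of the KMT coupling to two-dimensional random walks with non-nearest-neighbor steps whose increments have a sufficiently good moment generating function, provides a joint coupling of $\mcl Z$ and $Z$ on $I \cap \BB Z$ under which
\[
\sup_{t \in I \cap \BB Z} \bigl| \mcl Z_t - A Z_t \bigr| \leq C (\log n)^{q_0}
\]
with probability $1 - o_n^\infty(n)$, where $A$ is the linear map that matches the covariance of the walk step to that of $Z$ and $C, q_0$ are universal. Each of the mating-of-trees bijections gives a discrete analogue of~\eqref{eqn-inf-adjacency} characterizing the edges of $M_I$ in terms of infima of the coordinates of $\mcl Z$ over integer intervals. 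Given an edge $\{v_1, v_2\} \in \mcl EM_I$ with $\phi_I(v_j) = i_j$, this discrete adjacency condition transfers under the coupling to an approximate version of~\eqref{eqn-inf-adjacency} for $Z$, and one then exhibits an explicit path in $\mcl G^1_I$ from $i_1$ to $i_2$ of length at most $(\log n)^{p_0}$ by chaining through a bounded polylogarithmic number of vertices located at times where the infimum of $L$ or $R$ is attained on successively shorter subintervals. The reverse direction (item~\ref{item-map-count-M}) is handled symmetrically, and item~\ref{item-map-count-close} follows from items~\ref{item-map-count-G} and~\ref{item-map-count-M} together with the near-bijectivity of $\phi_I$ and $\psi_I$.

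The main obstacle, and the technical heart of the proof in~\cite{ghs-map-dist}, is that the infima in~\eqref{eqn-inf-adjacency} are sensitive to small perturbations of the process: a sup-norm perturbation of $Z$ of size $(\log n)^{q_0}$ can in principle destroy many edges or create many spurious edges. The resolution is to use standard fluctuation estimates for two-dimensional Brownian motion—on an interval of polylogarithmic length, the oscillation of $Z$ is with overwhelming probability at most polylogarithmic—to show that any such discrepancy can be absorbed into a polylogarithmic number of extra edges in the chaining construction above. This is exactly where the exponent $p_0$ arises, and it is the step I expect to be the hardest to track carefully, but since it has already been carried out in~\cite{ghs-map-dist} nothing new is needed here beyond the two reductions described in the first paragraph.
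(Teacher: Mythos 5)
Your proposal is correct and matches the paper's approach: the paper also derives Theorem~\ref{thm-map-count} directly from \cite[Theorem 1.9]{ghs-map-dist} by translating the interval and using Brownian scaling to pass between $\mcl G^1$ and $\mcl G^\ep$, the only point you do not spell out being that $p_0$ is taken slightly larger than the exponent $4$ of that theorem so as to absorb its multiplicative constant.
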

 
Theorem~\ref{thm-map-count} follows from~\cite[Theorem 1.9]{ghs-map-dist} by using translation invariance to transfer from the interval $[-n,n] $ to the interval $I$; using Brownian scaling to transfer from $\mcl G^1$ to $\mcl G^\ep$; and choosing $p_0$ slightly larger than the exponent 4 appearing in that theorem in order to get rid of the constant $C$. Note that~\cite[Theorem 1.9]{ghs-map-dist} includes conditions on the number of paths that hit a given vertex which are not included in Theorem~\ref{thm-map-count} since they will not be needed in the present paper. 

We note that concatenating the paths from conditions~\ref{item-map-count-G} and~\ref{item-map-count-M} in Theorem~\ref{thm-map-count} shows that $\phi_I$ and $\psi_I$ distort graph distances by a factor of at most $(\log n)^{p_0}$, i.e., $\phi_I$ and $\psi_I$ are rough isometries with probability $1-o_n^\infty(n)$ (see~\cite[Lemma 1.10]{ghs-dist-exponent}).

\begin{remark}[Schnyder wood-decorated maps] \label{remark-schnyder-wood}
All of the results in this paper should also hold in the case of the uniform infinite Schnyder-wood decorated triangulation, as constructed in~\cite{lsw-schnyder-wood} (which belongs to the $\gamma$-LQG universality class for $\gamma = 1$, and which \emph{is} included in the main theorems of~\cite{ghs-map-dist,gm-spec-dim,dg-lqg-dim}).
The reason why we do not include this map in our list in Section~\ref{sec-main-results} is that the condition~\eqref{eqn-submap-edge} is not satisfied for the definition of $M_I$ used in the case of Schnyder wood-decorated maps in~\cite{ghs-map-dist} (see~\cite[Remark 1.3]{ghs-map-dist}).  
We expect that one can give an alternative definition of $M_I$ in the Schnyder wood-decorated case for which~\eqref{eqn-submap-edge} is satisfied using the bijection of~\cite{lsw-schnyder-wood}, and prove (via a few pages of combinatorial arguments) that all of the results in~\cite{ghs-map-dist} are still satisfied with this definition.
This would allow us to extend all of our results to the case of the Schnyder wood-decorated triangulation. 
\end{remark}

\section{The core argument}
\label{sec-core-argument}

In this section we will give the proof of our main results modulo a few technical estimates which are proven in Section~\ref{sec-estimates}. We follow the approach described in Section~\ref{sec-embedding-intro}. We start in Section~\ref{sec-core-setup} by defining a coupling of the combinatorial random planar map $M$ and the mated-CRT map $\mcl G^\ep$ using Theorem~\ref{thm-map-count} for a particular choice of interval $I=I^\ep$, thereby defining an embedding of $M_{I^\ep}$ into $\BB C$. In Section~\ref{sec-weight-def} we use our coupling to define a unimodular vertex weighting of $\mcl G^\ep$ and a reversible vertex weigting of $M_{I^\ep}$ (recall the definitions from Section~\ref{sec-unimodular}). We record several estimates for these weight functions in Section~\ref{sec-weight-estimate}. In Section~\ref{sec-euclidean-dist}, we prove an upper bound for the weighted graph distance displacement, and thereby the embedded Euclidean displacement, of the random walks on $M_{I^\ep}$ and $\mcl G^\ep$, using Markov type theory (in particular Corollary~\ref{cor:UnimodularMarkovType}). In Section~\ref{sec-graph-dist}, we conclude by comparing embedded Euclidean distances and graph distances, using the results of~\cite{dg-lqg-dim}.

\subsection{Setup}
\label{sec-core-setup}

Let $(M,\BB e)$ be one of the first four rooted planar maps listed in Section~\ref{sec-main-results} and let $\gamma \in (0,2)$ be the corresponding LQG parameter. We define the statistical mechanics model $\mcl S$ on $M$ and the maps $M_I$, the ``almost inclusion" $\iota_I : M_I \rta M$, and the functions $\phi_I : \mcl V  M_I \rta I\cap \BB Z$ and $\psi_I : I\cap\BB Z\rta \mcl V M_I$ for intervals $I\subset \BB R$ as in Section~\ref{sec-coupling}. (Let us note that all our analysis of the $\gamma$-mated CRT map  applies for arbitrary $\gamma \in (0,2)$. To improve readability, however, we will henceforth restrict attention to those $\gamma \in (0,2)$ for which there is a corresponding combinatorial map model.)

Let $Z = (L,R)$ be the correlated Brownian motion from~\eqref{eqn-bm-cov} and let $\{\mcl G^\ep\}_{\ep > 0}$ be the $\gamma$-mated-CRT maps with spacing $\ep$ constructed from $Z$ as in~\eqref{eqn-inf-adjacency}. 
Let $((\BB C ,h , 0, \infty) , \eta)$ be the $\gamma$-quantum cone/space-filling SLE$_{\kappa}$ curve pair determined by $Z$ as in Section~\ref{sec-peanosphere}.  We assume that $h$ is a circle average embedding (Section~\ref{sec-lqg-prelim}) and that $\eta$ is parametrized by $\gamma$-LQG mass with respect to $h$, so that $\mcl G^\ep$ is isomorphic to the adjacency graph of cells $\eta([x-\ep,x])$ for $x\in\ep\BB Z$.  

We will now couple the above objects together using Theorem~\ref{thm-map-count} with a particular choice of parameters. Fix a large constant $K > 1$, which we will eventually choose in a manner which depends only on $\gamma$. For $\ep \in (0,1)$, let $\theta^\ep$ be sampled uniformly from $[0, \ep^{-K} ]_{\BB Z}$, independently from everything else. We couple $Z$ and $(M,\BB e,\mcl S)$ as in Theorem~\ref{thm-map-count} with the coupling interval 
\eqb \label{eqn-coupling-interval}
I^\ep  = [a^\ep ,b^\ep] := [-\theta^\ep ,  \ep^{-K} - \theta^\ep ]  
\eqe
and with $n = \lfloor \ep^{-K} \rfloor$. We emphasize that the superscript $\ep$ does not denote a power (we will never be raising anything to the power $\ep$). The reason for the random index shift $\theta^\ep$ is to avoid making the root vertex $\BB v$ special from the perspective of the interval $I^\ep$. In fact, we have the following lemma, which will be important when we check that a certain vertex weighting on $\mcl VM_{I^\ep}$ is reversible (Lemma~\ref{lem-unimodular-weight}). 
 
\begin{lem} \label{lem-uniform-edge} 
The planar map $M_{I^\ep}$ (but not the interval $I^\ep$) is a.s.\ determined by the translated random walk $\left( \mcl Z_{t -\theta^\ep} - \mcl Z_{-\theta^\ep} \right)_{t \in \BB Z}$.
Moreover, if we condition on the random walk / Brownian motion pair 
\eqb \label{eqn-translated-pair}
 \left( \left( Z_{t- \ep \theta^\ep}  - Z_{-\ep \theta^\ep} \right)_{t\in\BB R} , \left( \mcl Z_{t -\theta^\ep} - \mcl Z_{-\theta^\ep} \right)_{t\in \BB Z} \right)  
\eqe 
and on the event $ \left\{\BB e \in \mcl E( M_{I^\ep} ) \setminus \mcl E(\bdy M_{I^\ep})  \right\}$, then the conditional law of the root edge $\BB e$ is uniform on $\mcl E ( M_{I^\ep}) \setminus \mcl E(\bdy M_{I^\ep} )$.
\end{lem}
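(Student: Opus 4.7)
The plan is to verify both statements by tracking how the root edge $\BB e$ is encoded under the random time shift. For the first statement, the mating-of-trees construction of $M_I$ (for any interval $I$) depends only on the walk increments on $I \cap \BB Z$. The increments of the shifted walk $\mcl Z'_t := \mcl Z_{t - \theta^\ep} - \mcl Z_{-\theta^\ep}$ on $[0, \ep^{-K}] \cap \BB Z$ coincide (since $\theta^\ep \in \BB Z$) with those of $\mcl Z$ on $I^\ep \cap \BB Z$, so $M_{I^\ep}$ is a deterministic function of $\mcl Z'$, and in particular of the pair in~\eqref{eqn-translated-pair}.

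For the second statement, I first locate $\BB e$ in the shifted indexing. Writing $\lambda'$ for the index-to-edge map induced by applying the mating-of-trees bijection to $\mcl Z'$, the unrooted map produced by $\lambda'$ is canonically identified with $M$, with indices shifted: $\lambda'(s) = \lambda(s - \theta^\ep)$ for every $s \in \BB Z$. In particular, $\BB e = \lambda(0) = \lambda'(\theta^\ep)$. Next, by stationarity of the walk and Brownian motion, for each deterministic integer $k$ the pair $(Z_{\cdot - \ep k} - Z_{-\ep k}, \mcl Z_{\cdot - k} - \mcl Z_{-k})$ has the same law as $(Z, \mcl Z)$; combined with the independence of $\theta^\ep$ from $(Z, \mcl Z)$, conditioning on the value of $\theta^\ep$ shows that the translated pair in~\eqref{eqn-translated-pair} has the same (unconditional) law as $(Z, \mcl Z)$ and is independent of $\theta^\ep$. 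Consequently, conditional on this pair, $\theta^\ep$ remains uniform on $[0, \lfloor \ep^{-K}\rfloor - 1]_{\BB Z}$.

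Let $S := \lambda'^{-1}\bigl(\iota_{I^\ep}(\mcl E M_{I^\ep} \setminus \mcl E(\bdy M_{I^\ep}))\bigr)$, which is determined by $\mcl Z'$. The event $\{\BB e \in \mcl E M_{I^\ep} \setminus \mcl E(\bdy M_{I^\ep})\}$ is precisely $\{\theta^\ep \in S\}$, so conditional on the translated pair and on this event, $\theta^\ep$ is uniformly distributed on $S$. In the UIPT and bipolar-oriented models $\lambda'$ is a bijection from $\BB Z$ onto $\mcl E M$, while in the spanning-tree case it is $2$-to-$1$; in either situation $\lambda'$ restricts to a $k$-to-$1$ map (with $k \in \{1,2\}$) from $S$ onto $\mcl E M_{I^\ep} \setminus \mcl E(\bdy M_{I^\ep})$ via the injection $\iota_{I^\ep}$, by~\eqref{eqn-submap-edge}. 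The uniform distribution of $\theta^\ep$ on $S$ therefore transfers to the uniform distribution of $\BB e = \lambda'(\theta^\ep)$ on $\mcl E M_{I^\ep} \setminus \mcl E(\bdy M_{I^\ep})$.

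The delicate point is verifying that $S$ lies entirely within the range $[0, \lfloor \ep^{-K}\rfloor - 1]_{\BB Z}$ of $\theta^\ep$, rather than potentially also containing the single rightmost index $\lfloor \ep^{-K}\rfloor$ of $I^\ep$ in the shifted indexing (which is allowed by~\eqref{eqn-submap-edge} but excluded from the support of $\theta^\ep$); one must therefore appeal to the precise form of each mating-of-trees bijection to rule this out, and to confirm the $k$-to-$1$ correspondence in the spanning-tree case. Once these combinatorial points are settled, the claim becomes a pure counting argument.
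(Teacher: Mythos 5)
Your proposal takes essentially the same route as the paper's proof: translate so the coupling interval becomes $[0,\lfloor\ep^{-K}\rfloor]$, note $M_{I^\ep}$ is a deterministic function of the shifted walk, observe $\BB e = \lambda'(\theta^\ep)$ with $\theta^\ep$ independent of the shifted pair and uniform on $[0,\lfloor\ep^{-K}\rfloor-1]_{\BB Z}$, and then transfer uniformity through~\eqref{eqn-submap-edge}. The paper's version is terser — it simply cites ``\eqref{eqn-submap-edge} and the discussion preceding it'' and does not spell out the set $S$ or the $k$-to-one bookkeeping — but the underlying argument is identical. The boundary caveat you flag (whether the single rightmost index $\lfloor\ep^{-K}\rfloor$ of the shifted interval, which is in $I^\ep\cap\BB Z$ but not in the support of $\theta^\ep$, can lie in $S$) is a genuinely subtle point that the paper's own proof also glosses over; it is effectively delegated to the precise definitions of $M_I$, $\iota_I$, and $\lambda$ in \cite{ghs-map-dist}, which is where the paper implicitly puts it. Your version is therefore correct as a reconstruction, and if anything it is more honest about where the combinatorial input from \cite{ghs-map-dist} is actually being used.
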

\begin{proof}  
The definition of our coupling (see the discussion just after Theorem~\ref{thm-map-count}) implies that the translated pair~\eqref{eqn-translated-pair} 
is coupled together as in Theorem~\ref{thm-map-count} with coupling interval $ I  = [0, \lfloor \ep^{-K} \rfloor ]  $. The index shift $\theta^\ep$ is independent from the pair~\eqref{eqn-translated-pair} and the map $M_{I^\ep}$ is obtained from the translated walk in~\eqref{eqn-translated-pair} in the same manner that $M_{ [0, \lfloor \ep^{-K} \rfloor  ]}$ is obtained from $\mcl Z$. 

Let $\lambda : \BB Z\rta \mcl E(M)$ be the function defined just after Definition~\ref{def-sg-restrict} and let $\lambda_{\theta^\ep}(\cdot) := \lambda(\cdot - \theta^\ep) $ be the analogous function with the translated walk of~\eqref{eqn-translated-pair} in place of $\mcl Z$.  
Then the edge-rooted map $(M , \lambda(-\theta^\ep))$ and the path $\lambda_{\theta^\ep}$ are a.s.\ determined by the translated walk in~\eqref{eqn-translated-pair} in the same manner that $(M,\BB e)$ and $\lambda$ are a.s.\ determined by $\mcl Z$. 
By~\eqref{eqn-inclusion-function}, we can canonically identify $M_{I^\ep} \setminus \bdy M_{I^\ep}$ with the submap $\iota_{I^\ep}(M_{I^\ep} \setminus \bdy M_{I^\ep})$ of $M$.
We henceforth make this identification, so in particular the set $\lambda_{\theta^\ep}^{-1}\left(   \mcl E( M_{I^\ep} ) \setminus \mcl E(\bdy M_{I^\ep})   \right) \subset \BB Z$ is well-defined and determined by the walk in~\eqref{eqn-translated-pair}.
By~\eqref{eqn-submap-edge}, we have $\lambda_{\theta^\ep}^{-1}\left(  \mcl E( M_{I^\ep} ) \setminus \mcl E(\bdy M_{I^\ep}) \right) \subset [0, \lfloor \ep^{-K}\rfloor ]_{\BB Z}$. Furthermore, by the discussion just after Definition~\ref{def-sg-restrict}, $\lambda_{\theta^\ep}$ takes $\lambda_{\theta^\ep}^{-1}\left(    \mcl E( M_{I^\ep} ) \setminus \mcl E(\bdy M_{I^\ep})  \right)$ to $  \mcl E( M_{I^\ep} ) \setminus \mcl E(\bdy M_{I^\ep})  $ in either a one-to-one or a two-to-one manner (depending on the particular model under consideration).

We have $\lambda_{\theta^\ep}(\theta^\ep) = \lambda(0) = \BB e$. 
Since $\theta^\ep$ is sampled uniformly from $[0,\lfloor \ep^{-K} \rfloor  ]_{\BB Z}$ and $\theta^\ep$ is independent from the pair~\eqref{eqn-translated-pair}, if we condition on the pair~\eqref{eqn-translated-pair} and the event $ \left\{\BB e \in \mcl E( M_{I^\ep} ) \setminus \mcl E(\bdy M_{I^\ep})  \right\}$, then the conditional law of $\theta^\ep$ is uniform on the set $\lambda_{\theta^\ep}^{-1}\left(   \mcl E( M_{I^\ep} ) \setminus \mcl E(\bdy M_{I^\ep})   \right)$. 
Therefore, the last part of the preceding paragraph implies that the conditional law of $\lambda_{\theta^\ep}(\theta^\ep) = \BB e$ given~\eqref{eqn-translated-pair} and the event $\left\{\BB e \in \mcl E( M_{I^\ep} ) \setminus \mcl E(\bdy M_{I^\ep}) \right\}$ is uniform on $\mcl E ( M_{I^\ep}) \setminus \mcl E(\bdy M_{I^\ep} )$, as required.
\end{proof}

With $\phi_{I^\ep}$ and $\psi_{I^\ep}$ as in~\eqref{eqn-peano-functions}, we define
\eqb \label{eqn-coupling-function}
\phi^\ep := \ep \phi_{I^\ep} : \mcl V M_{I^\ep}  \rta \mcl V\mcl G^\ep_{I^\ep} , \quad
\psi^\ep :=   \psi_{I^\ep}(\cdot/\ep)  :  \mcl V\mcl G^\ep_{I^\ep} \rta \mcl V M_{I^\ep}   ,  
\quad \op{and} \quad
\Phi^\ep := \eta\circ \phi^\ep : \mcl VM_{I^\ep} \rta \BB C . 
\eqe 
The function $\Phi^\ep$ is the embedding of the map $M_{I^\ep}$ into $\BB C$ illustrated in Figure~\ref{fig-embedding-setup}. This embedding is obtained by first applying the function $\phi^\ep$ to send each vertex of $M_{I^\ep}$ to a corresponding vertex of $\mcl G^\ep_{I^\ep}$, then applying the embedding $\ep\BB Z\ni x\mapsto \eta(x)$ of $\mcl G^\ep_{I^\ep}$ into $\BB C$.

\subsection{Vertex weightings on $\mcl G^\ep$ and $M_{I^\ep}$}
\label{sec-weight-def}
 
We will use the embeddings $\eta: \mcl V\mcl G^\ep \rta \BB C$ and $\Phi^\ep : \mcl VM^\ep_{I^\ep} \rta \BB C$ to define a unimodular vertex weighting $\omega^\ep_{\mcl G}$ on $ \mcl G^\ep$ and a reversible vertex weighting $\omega^\ep_M$ on $ M^\ep_{I^\ep}$ (see Section~\ref{sec-unimodular} for definitions). We want to define these weightings in such a way that
\eqb \label{eqn-weight-approx}
\omega_{\mcl G}^\ep(x) \approx \op{diam}\Bigl(\eta\bigl([x-\ep,x]\bigr) \Bigr) \quad\op{and}\quad
\omega_M^\ep(v) \approx \op{diam}\Bigl( \eta\bigl([\phi^\ep(v) -\ep,\phi^\ep(v) ]\bigr) \Bigr) ,
\eqe 
so that $\omega_{\mcl G}^\ep$- and $\omega_M^\ep$-distances are close to Euclidean distances.  
However, rather than define $\omega_{\mcl G}^\ep$ and $\omega_M^\ep$ directly via the formulas in~\eqref{eqn-weight-approx}, we will use slightly modified weights which will be chosen to circumvent the following two inconveniences: 
\begin{enumerate}
\item The law of the pair $(h,\eta)$ is not exactly invariant under translations of the form $(h,\eta) \mapsto (h(\cdot +\eta(t)) , \eta(\cdot +t ) -\eta(t))$ for $t\in\BB R$, so~\eqref{eqn-weight-approx} does not define a reversible vertex weighting of $(M,\BB v)$ or a unimodular vertex weighting of $(\mathcal{G}^\ep,0)$. \label{item-issue-unimodular} 
\item The boundaries of the cells $\eta([\phi^\ep(v) -\ep,\phi^\ep(v) ])$ and $\eta([\phi^\ep(v') -\ep,\phi^\ep(v') ])$ corresponding to adjacent vertices $v,v' \in \mcl V M_{I^\ep}$ do not necessarily intersect and intersecting cells do not necessarily correspond to adjacent vertices, so the Euclidean distance between the embeddings of two vertices of $\mcl V M_{I^\ep}$ might not be comparable to the minimal sum of the diameters of the cells along a path in $M_{I^\ep}$ which connects the two given vertices. \label{item-issue-diam}
\end{enumerate} 
To get around issue~\ref{item-issue-unimodular}, we will define vertex weightings in terms of a re-scaled version of the pair $(h(\cdot +\eta(t)) , \eta(\cdot +t ) -\eta(t))$ whose law \emph{is} stationary in $t$. To get around issue~\ref{item-issue-diam}, we will define weights in terms of the Euclidean diameter of the union of the cells in a $\mcl G^\ep$-graph distance neighborhood of polylogarithmic size, and use that cells corresponding to adjacent vertices of $M_{I^\ep}$ cannot lie at more than polylogarithmic graph distance from each other in $\mcl G^\ep $ due to Theorem~\ref{thm-map-count}. We will show in Section~\ref{sec-weight-estimate} below that these modifications typically alter the weights by at most a subpolynomial factor as compared to~\eqref{eqn-weight-approx}. 

By~\cite[Theorem~1.9]{wedges}, and as previously discussed in Section \ref{sec-cone-prelim}, for each $t\in\BB R$ the field/curve pair $(h(\cdot +\eta(t)) , \eta(\cdot +t ) -\eta(t))$ agrees in law with $(h,\eta)$ modulo rotation and scaling, i.e., for each $t \in \BB R$ there is a random constant $\rho_t \in \BB C$ such that with
\eqb  \label{eqn-scale-field}
h^t :=  h(\rho_t\cdot + \eta(t)) + Q\log |\rho_t| 
\quad\text{and} \quad \eta^t := \rho_t^{-1} \left( \eta(\cdot  +t) - \eta(t) \right) 
\quad \text{we have} \quad
(h^t ,\eta^t) \eqD (h,\eta) .
\eqe 
The value of $|\rho_t|$ can be made explicit: since $h$ is a circle average embedding of a $\gamma$-quantum cone, we want to choose $\rho_t$ in such a way that $h^t$ is a circle average embedding. This means that $\op{arg}\rho_t$ is not necessarily determined by $h^t$ and by the definition of the circle average embedding (c.f.\ Section~\ref{sec-lqg-prelim}), 
\eqb \label{eqn-shift-def}
|\rho_t| = \sup\left\{ r > 0 : h_r(\eta(t)) + Q\log r  = 0 \right\} ,
\eqe 
where $h_r(\cdot)$ denotes the circle average process. We note that $|\rho_t|$ is not necessarily determined by $h^t$.

Now fix $p > p_0$, where $p_0$ is the constant from Theorem~\ref{thm-map-count}. For $x\in \ep \BB Z = \mcl V\mcl G^\ep$, define the weight
\allb \label{eqn-vertex-weight}
\omega_{\mcl G}^\ep(x) 
&:=  \max\left\{ 1  ,  \op{diam}\left( \bigcup_{y\in \mcl V\mcl B^{\mcl G^\ep}_{(\log \ep^{-1})^p}(0)} \eta^x([y-\ep,y]) \right) \right\}   \notag \\
&=   \max\left\{ 1  ,   |\rho_x|^{-1} \op{diam}\left( \bigcup_{y\in \mcl V\mcl B^{\mcl G^\ep}_{(\log \ep^{-1})^p}(x)} \eta ([y-\ep,y]) \right)  \right\}  .
\alle
Using the notation~\eqref{eqn-coupling-function}, we also define a weight on the vertices $v$ of $M_{I^\ep}$ by
\eqb \label{eqn-map-weight}
\omega_M^\ep (v) :=  
\omega_{\mcl G}^\ep( \phi^\ep(v)  )   . 
\eqe 

It turns out that $\omega_M^\ep$ is not quite a reversible vertex weighting on $M_{I^\ep}$, but it is a reversible vertex weighting on the connected component of the root edge in the (possibly disconnected) graph
\eqb \label{eqn-interior-graph}
\rng M_{I^\ep} := M_{I^\ep} \setminus \mcl E( \bdy M_{I^\ep} ) 
\eqe 
if we condition on the high-probability event that the root edge is not in $\mcl E(\bdy M_{I^\ep})$.

\begin{lem} \label{lem-unimodular-weight}
The vertex-weighted graph $(\mcl G^\ep , \omega_{\mcl G}^\ep , 0)$ is unimodular in the sense of Definition~\ref{def-unimodular-weight}. 
Furthermore, under the conditional law given that the root edge $\BB e$ belongs to $\mcl E( M_{I^\ep} ) \setminus \mcl E(\bdy M_{I^\ep})$, the vertex-weighted graph $\left( \rng M_{I^\ep} , \omega_M^\ep , \BB v \right)$ (using the notation~\eqref{eqn-interior-graph}) is reversible in the sense of Definition~\ref{def-reversible-weight}. 
\end{lem}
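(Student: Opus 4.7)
The plan is to derive both statements from translation-type invariances built into the SLE/LQG and mating-of-trees constructions. I would reduce Part 1 to translation invariance of the law of $(\mcl G^\ep, \omega_{\mcl G}^\ep, 0)$ under the $\ep\BB Z$-action on the vertex set of $\mcl G^\ep$, and Part 2 to the standard fact that a uniform random directed edge of a graph is stationary under edge-reversal, combined with Lemma~\ref{lem-uniform-edge}.

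For Part 1, stationarity of the increments of $Z$ already gives translation invariance in law of the labeled graph $\mcl G^\ep$ under shifts of $\ep\BB Z$. The weight $\omega_{\mcl G}^\ep(x)$ defined in \eqref{eqn-vertex-weight} is engineered to respect this invariance: it is an explicit measurable functional of the pair $(h^x, \eta^x)$ from \eqref{eqn-scale-field}--\eqref{eqn-shift-def} and of the graph-distance ball around $x$ in $\mcl G^\ep$, and by \eqref{eqn-scale-field} the law of $(h^x, \eta^x)$ does not depend on $x$. After checking that the re-rooting operation at $\eta(x)$ composes coherently with further re-rooting at $\eta(x+y)$ --- a short consistency check for the scaling structure of the $\gamma$-quantum cone implicit in~\cite[Theorem~1.9]{wedges} --- one obtains the joint equality in law $(\mcl G^\ep, \omega_{\mcl G}^\ep, 0) \eqD (\mcl G^\ep, \omega_{\mcl G}^\ep, y)$ for every $y \in \ep\BB Z$. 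The mass transport principle \eqref{eqn-unimodular-sum} is then an immediate consequence of this translation invariance by the usual change-of-variables argument for random rooted graphs indexed by the transitive group $\ep\BB Z$.

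For Part 2, Lemma~\ref{lem-uniform-edge} says that, conditional on the translated pair \eqref{eqn-translated-pair} and on $\{\BB e \in \mcl E(\rng M_{I^\ep})\}$, the root edge $\BB e$ is uniform on $\mcl E(\rng M_{I^\ep})$. The translated pair determines $(\rng M_{I^\ep}, \omega_M^\ep)$ as an isomorphism class of vertex-weighted graphs: $M_{I^\ep}$ and the function $\phi^\ep$ are produced from the shifted walk by the mating-of-trees bijection, and $\omega_M^\ep = \omega_{\mcl G}^\ep \circ \phi^\ep$ depends on the shifted Brownian motion only through a rotation-invariant functional, since $\omega_{\mcl G}^\ep$ is built from Euclidean diameters normalized by $|\rho_x|$. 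Hence $\BB e$ remains uniform on $\mcl E(\rng M_{I^\ep})$ conditional on the isomorphism class of $(\rng M_{I^\ep}, \omega_M^\ep)$. Since $\BB v$ is a uniform endpoint of $\BB e$, the pair $(\BB v, \BB e \setminus \{\BB v\})$ is a uniform random directed edge of $\rng M_{I^\ep}$. Such a distribution is invariant under edge-reversal, and has the same joint law as the pair $(\BB v, \wt{\BB v})$ from Definition~\ref{def-reversible-weight}, yielding reversibility.

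Part 2 is essentially a bookkeeping exercise once Lemma~\ref{lem-uniform-edge} is in hand. The main subtlety I anticipate lies in Part 1: the one-point equality $\omega_{\mcl G}^\ep(x) \eqD \omega_{\mcl G}^\ep(0)$ is immediate from \eqref{eqn-scale-field}, but upgrading it to a joint equality in law of the pointed weighted graph under translation requires verifying that the spatial shift by $\eta(x)$ composes coherently with the logarithmic radial renormalization defining $\rho_x$ in \eqref{eqn-shift-def}. This is a routine but slightly fiddly computation, and is the only nontrivial step in the proof.
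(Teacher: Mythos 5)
Your proposal is correct and follows essentially the same route as the paper. For Part 1, both arguments derive unimodularity from translation invariance of the law of the weighted graph over $\ep\BB Z$, which comes from stationarity of $Z$ together with the re-rooting identity~\eqref{eqn-scale-field}; the paper also explicitly invokes time-reversal invariance of $Z$, but this is equivalent to your change-of-variables reindexing. For Part 2, both arguments use that the translated pair~\eqref{eqn-translated-pair} determines $(\rng M_{I^\ep}, \omega_M^\ep)$ (via rotation-invariance of the weights and~\cite[Theorem 1.11]{wedges}) so that Lemma~\ref{lem-uniform-edge} gives $\BB e$ uniform conditional on the weighted graph; the paper phrases the conclusion as ``$\BB v$ is degree-biased uniform, hence stationary,'' while you phrase it as ``uniform directed edge is invariant under reversal,'' and these are the same fact.

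One small remark on your ``main subtlety'': no composition/cocycle check is actually needed. Since each weight $\omega_{\mcl G}^\ep(x)$ is, by the first line of~\eqref{eqn-vertex-weight}, an \emph{absolute} measurable functional of the single re-rooted pair $(h^x,\eta^x)$ (and hence, by rotation-invariance of the diameter, of $Z$ alone), translation of $Z$ by $y$ simultaneously transforms the graph and the entire weight field by the shift $v\mapsto v+y$, and the joint translation invariance follows from $Z_{\cdot+y}-Z_y\eqD Z$ in one step, with no iterated re-rooting to reconcile.
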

\begin{proof}
We first check unimodularity of $(\mcl G^\ep , \omega_{\mcl G}^\ep , 0)$. 
In the notation of~\eqref{eqn-scale-field}, for $x\in\mcl V\mcl G^\ep = \ep\BB Z$ the vertex-weighted graph $(\mcl G^\ep , \omega_{\mcl G}^\ep , x)$ is constructed from $(h^x,\eta^x)$ in the same deterministic manner that the vertex-weighted graph $(\mcl G^\ep , \omega_{\mcl G}^\ep , 0)$ is constructed from $(h,\eta)$. 
By this,~\eqref{eqn-scale-field}, and the invariance of the law of $Z$ under time reversal, it follows that for each such $x$, 
\eqb \label{eqn-vertex-reverse}
 (\mcl G^\ep , \omega_{\mcl G}^\ep ,  x, 0 ) \eqD  (\mcl G^\ep , \omega_{\mcl G}^\ep ,  0, - x )  \eqD (\mcl G^\ep , \omega_{\mcl G}^\ep ,  0, x ) 
\eqe
where here by $\eqD$ we mean equality in law as doubly marked vertex-weighted graphs (i.e., we forget the ordering of the vertices of $\mcl G^\ep$). 
Hence, for any choice of measurable function $F : \BB G_{\bullet\bullet}^{\op{wt}} \rta [0,\infty)$ as in Definition~\ref{def-unimodular-weight},
\alb
\BB E\left[ \sum_{x \in \mcl V\mcl G^\ep} F(\mcl G^\ep , \omega_{\mcl G}^\ep , 0,x ) \right] 
=  \sum_{x \in \ep\BB Z}  \BB E\left[ F(\mcl G^\ep , \omega_{\mcl G}^\ep , 0,x ) \right] 
&=  \sum_{x \in \ep\BB Z}  \BB E\left[ F(\mcl G^\ep , \omega_{\mcl G}^\ep , x , 0) \right] \\
&= \BB E\left[ \sum_{x \in \mcl V\mcl G^\ep} F(\mcl G^\ep , \omega_{\mcl G}^\ep , x, 0 ) \right]  .
\ale

Next we consider $\left( \rng M_{I^\ep} , \omega^\ep_M , \BB v \right)$. By the definitions~\eqref{eqn-vertex-weight} and~\eqref{eqn-map-weight} of the weight functions, the fact that $Z$ a.s.\ determines the pair $(h,\eta)$ modulo rotation~\cite[Theorem 1.11]{wedges}, and the same argument given in the proof of Lemma~\ref{lem-uniform-edge}, the translated Brownian motion/random walk pair of~\eqref{eqn-translated-pair} a.s.\ determines $(M_{I^\ep}  , \omega_M^\ep , \BB v)$. Consequently, Lemma~\ref{lem-uniform-edge} implies that the root edge $\BB e$ is uniformly distributed on $\mcl E(M_{I^\ep}) \setminus \mcl E( \bdy M_{I^\ep}) =\mcl E(\rng M_{I^\ep})$ if we condition on $(M_{I^\ep} ,\omega_M^\ep)$ and on the event $\left\{\BB e \in \mcl E( M_{I^\ep} ) \setminus \mcl E(\bdy M_{I^\ep}) \right\}$. Therefore, under this conditioning, $\BB v$ is sampled from the uniform measure on vertices of $\rng M_{I^\ep}$ weighted by their $\rng M_{I^\ep}$-degree, and so $( \rng M_{I^\ep} , \omega_M^\ep , \BB v)$ is reversible. 
\end{proof}

\subsection{Estimates for the weight functions} 
\label{sec-weight-estimate}

In this subsection we prove estimates for the vertex weight functions $\omega_{\mcl G}^\ep$ and $\omega_M^\ep$ defined in the preceding subsection which will eventually allow us to prove that (a) the random walks on $\mcl G^\ep$ and $M_{I^\ep}$ are diffusive with respect to the weighted graph distance and (b) under the embeddings $x\mapsto \eta(x)$ and $v\mapsto \Phi^\ep(v)$ discussed in Section~\ref{sec-core-setup}, $\omega_{\mcl G}^\ep$- (resp.\ $\omega_M^\ep$-) distances are close, modulo subpolynomial errors, to Euclidean distances.  
Some of the more standard arguments needed in this subsection are given in Section~\ref{sec-estimates} so as to allow the reader to get to the main ideas of the proof as soon as possible.

We first record an estimate for the second moment of the weight functions at the root vertex. This estimate is needed when we apply Corollary~\ref{cor:UnimodularMarkovType} in the proof of Lemma~\ref{lem-weight-dist} (this corollary is also the reason for the factor of $\op{deg}^{\mcl G^\ep}(0)$). 

\begin{prop} \label{prop-weight-moment}
Define the weight functions $\omega_{\mcl G}^\ep$ and $\omega_M^\ep$ as in~\eqref{eqn-vertex-weight} and~\eqref{eqn-map-weight}, respectively. For each $\ep \in (0,1)$,  
\eqb \label{eqn-weight-moment}
\BB E\left[ \omega_{\mcl G}^\ep(0)^2 \op{deg}^{\mcl G^\ep}(0) \right] \leq \ep^{1+o_\ep(1)} \quad \op{and} \quad
\BB E\left[ \omega_{M}^\ep(0)^2 \right] \leq \ep^{1+o_\ep(1)} .
\eqe  
\end{prop}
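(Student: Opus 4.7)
The strategy is to reduce the second moment of $\omega_{\mcl G}^\ep(0)$ to that of the diameter of the single cell $\eta([-\ep,0])$ via a mass-transport argument, and then to control the latter using the scaling of the $\gamma$-quantum cone. First, set $r := (\log \ep^{-1})^p$. Since adjacent cells of $\mcl G^\ep$ share a connected boundary arc, the union $\bigcup_{y \in \mcl V\mcl B^{\mcl G^\ep}_r(0)} \eta([y-\ep,y])$ is a connected subset of $\BB C$, and any two of its points lie in cells at graph distance at most $2r$; hence its Euclidean diameter is at most $2r \max_{y \in \mcl V\mcl B^{\mcl G^\ep}_r(0)} \op{diam}(\eta([y-\ep,y]))$. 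There is therefore a constant $C > 0$ such that $\omega_{\mcl G}^\ep(0)^2 \leq C r^2 \sum_{y \in \mcl V\mcl B^{\mcl G^\ep}_r(0)} \op{diam}(\eta([y-\ep,y]))^2 + 1$.

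Next I would multiply by $\deg^{\mcl G^\ep}(0)$, take expectation, and apply the mass-transport principle for the unimodular random rooted vertex-weighted graph $(\mcl G^\ep, \omega_{\mcl G}^\ep, 0)$ from Lemma~\ref{lem-unimodular-weight}. Applied to the transport function $F(G, \omega, u, v) := \op{diam}(\eta_u([v-\ep,v]))^2 \deg^G(u) \BB 1\{v \in \mcl V\mcl B^G_r(u)\}$, where $\eta_u$ is the rescaled SLE curve centered at $u$ appearing in the definition of $\omega_{\mcl G}^\ep(u)$, this swaps the roles of $0$ and the summation variable $y$, and using symmetry of graph distance yields $\BB E\bigl[\deg^{\mcl G^\ep}(0) \sum_{y \in \mcl V\mcl B^{\mcl G^\ep}_r(0)} \op{diam}(\eta([y-\ep,y]))^2\bigr] = \BB E\bigl[\op{diam}(\eta([-\ep,0]))^2 \sum_{y \in \mcl V\mcl B^{\mcl G^\ep}_r(0)} \deg^{\mcl G^\ep}(y)\bigr]$. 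Cauchy--Schwarz then bounds this by $\BB E[\op{diam}(\eta([-\ep,0]))^4]^{1/2} \BB E[S^2]^{1/2}$, where $S := \sum_{y \in \mcl V\mcl B^{\mcl G^\ep}_r(0)} \deg^{\mcl G^\ep}(y)$.

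The sum $S$ has $L^2$-norm of size $(\log \ep^{-1})^{O(1)}$, using polynomial growth of ball sizes in the mated-CRT map (cf.\ \cite{dg-lqg-dim}) and moments of vertex degrees (cf.\ \cite{gms-harmonic, ghs-map-dist}). For the cell diameter, the quantum-cone scaling~\eqref{eqn-cone-scale} with $b = \ep$ combined with the $\gamma$-LQG mass parametrization of $\eta$ gives the distributional identity $\op{diam}(\eta([-\ep,0])) \eqD R_\ep \cdot D'$, where $D'$ has the law of $\op{diam}(\eta([-1,0]))$ for the circle-average embedding of a standard $\gamma$-quantum cone. Combining H\"older's inequality, Lemma~\ref{lem-cone-hit-tail} (which gives $\BB E[R_\ep^k] \leq \ep^{k/(2-\gamma^2/2) - o_\ep(1)}$ for fixed $k$), and a finite-moment bound on $D'$ coming from SLE/LQG estimates, we conclude $\BB E[\op{diam}(\eta([-\ep,0]))^4] \leq \ep^{4/(2-\gamma^2/2) + o_\ep(1)} \leq \ep^{2 + o_\ep(1)}$, using $4/(2-\gamma^2/2) \geq 2$ for $\gamma \in (0,2)$. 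This yields the first bound. The bound on $\BB E[\omega_M^\ep(0)^2]$ then follows from the identity $\omega_M^\ep(\BB v) = \omega_{\mcl G}^\ep(\phi^\ep(\BB v))$ together with Lemma~\ref{lem-uniform-edge} (making $\phi^\ep(\BB v)$ approximately uniform over the coupling interval) and the stationarity of $\omega_{\mcl G}^\ep$.

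The hard part will be the $L^4$ moment bound on the diameter $D'$ of a single unit-mass space-filling SLE cell at the tip of a standard $\gamma$-quantum cone. While this is plausible from standard SLE/LQG estimates (compare~\cite{ghm-kpz, gms-harmonic}), its verification --- including a careful justification of the decoupling in H\"older's step needed to separate $R_\ep$ from $D'$ --- is the main technical input of the proof; everything else reduces to routine mass-transport and moment calculations.
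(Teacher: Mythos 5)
Your strategy has two genuine gaps, and the paper's own proof takes a different route that sidesteps both.

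\textbf{The mass-transport step is not valid as written.} The transport function $F(G,\omega,u,v) := \op{diam}(\eta_u([v-\ep,v]))^2 \deg^G(u) \BB 1\{v\in\mcl V\mcl B^G_r(u)\}$ is not a function on $\BB G_{\bullet\bullet}^{\op{wt}}$: it depends on the continuum objects $(h,\eta)$, not only on the isomorphism class of the doubly marked weighted graph. If you instead enlarge the decoration to include, say, the unimodular quantity $\wt\omega(v) := \op{diam}(\eta^v([-\ep,0])) = |\rho_v|^{-1}\op{diam}(\eta([v-\ep,v]))$ (this \emph{is} a unimodular vertex weighting, by the same argument as in Lemma~\ref{lem-unimodular-weight}) and transport $\wt\omega(v)^2\deg^G(u)\BB 1\{v\in\mcl B_r(u)\}$, you get the clean identity you want — but with $\wt\omega(v)$ on the left, not $\op{diam}(\eta([v-\ep,v]))$. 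Your initial pointwise bound for $\omega_{\mcl G}^\ep(0)$ is in terms of $\op{diam}(\eta([v-\ep,v])) = |\rho_v|\,\wt\omega(v)$, so after the transport you are left with uncontrolled factors of $|\rho_v|^2$ inside the sum, which must be controlled in moment, not merely with high probability (Proposition~\ref{prop-max-shift} gives only the latter). This is not addressed in your proposal.

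\textbf{The claimed moment bound on $R_\ep$ is false.} From the Brownian hitting-time computation (cf.\ the argument around \eqref{eqn-log-ball-R}), writing $-\log R_\ep = T_\ep$, one has $\BB E[R_\ep^k] = \BB E[e^{-kT_\ep}] = \ep^{\beta_k + o_\ep(1)}$ with $\beta_k = \bigl(\sqrt{(Q-\gamma)^2+2k}-(Q-\gamma)\bigr)/\gamma$. This is \emph{sublinear} in $k$: you can check that $\beta_2 = 1$ exactly (using $Q = 2/\gamma+\gamma/2$), but $\beta_4 < 2$ for every $\gamma\in(0,2)$, e.g.\ $\beta_4\approx 1.56$ at $\gamma=\sqrt2$. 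So $\BB E[R_\ep^4]$ decays strictly slower than $\ep^2$, and the bound $\BB E[\op{diam}(\eta([-\ep,0]))^4]\leq\ep^{2+o(1)}$ you need for Cauchy--Schwarz is not available. In fact Cauchy--Schwarz is intrinsically lossy here: the critical moment for the cell diameter is exactly $2$, and one must use H\"older with exponent close to $1$ on the diameter side.

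The paper's proof avoids both issues entirely. It does not use mass transport. Instead, it first shows (Lemma~\ref{lem-ball-contained}) that the union of cells in the $(\log\ep^{-1})^p$-ball around $0$ is contained in a \emph{fixed} Euclidean ball $B_r(0)$ with probability $1-o_\ep^\infty(\ep)$, by using the maximal-cell-diameter estimate to force any path of polylogarithmic length to stay near the origin. It then applies the quantum-cone scaling~\eqref{eqn-cone-scale} with $b=\ep^{1-\delta}$ to transfer from a fixed ball to the ball of radius $R_{\ep^{1-\delta}}$, deducing that the diameter of the log-ball is at most $r R_{\ep^{1-\delta}}$ with probability $1-o_\ep^\infty(\ep)$. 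Finally it computes $\BB E[R_{\ep^{1-\delta}}^{2+\zeta}]$ explicitly via optional stopping and sends $\zeta,\delta\to 0$, decoupling the degree factor via H\"older and the exponential tail of $\deg^{\mcl G^\ep}(0)$. If you want to salvage your approach, the two things you'd need are: a correct formulation of the transport (carrying the $|\rho_v|$ factors through) and a replacement of Cauchy--Schwarz by H\"older with exponent $1+\zeta$ on the diameter, $\zeta\to 0$ — at which point you'd essentially be reconstructing Lemma~\ref{lem-log-ball-moment}.
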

   
It is easy to see heuristically why we get a bound of order $\ep^{1+o_\ep(1)}$ in Proposition~\ref{prop-weight-moment}. The root cell $\eta([-\ep,0])$ looks roughly like a uniform (with respect to the counting measure on cells) cell of $\mcl G^\ep$ which intersects $\BB D$. Since there are typically of order $\ep^{-1}$ such cells, the expected Lebesgue measure of $\eta([-\ep,0])$ should be of order $\ep$. This Lebesgue measure is very unlikely to be much smaller than the squared Euclidean diameter of $\eta([-\ep,0])$ due to the estimates in~\cite[Section 3.2]{ghm-kpz}. Replacing a single cell with the union of the cells in a ball of polylogarithmic size should increase the diameter by at most a polylogarithmic factor. The degree factor in~\eqref{eqn-weight-moment} should not have a significant effect on the expectation since $\op{deg}^{\mcl G^\ep}(0)$ has an exponential tail~\cite[Lemma 2.2]{gms-harmonic}.
Rigorously, Proposition~\ref{prop-weight-moment} is a straightforward application of some basic SLE/LQG estimates, as will be explained in Section~\ref{sec-log-ball-moment}. 

The second main estimate of this subsection will allow us to deal with the distortion factor $|\rho_x|^{-1}$ appearing in~\eqref{eqn-vertex-weight} and thereby compare weighted graph distances to Euclidean distances.

\begin{prop} \label{prop-max-shift}
Define the scaling factors $\rho_t$ for $t\in\BB R$ as in~\eqref{eqn-scale-field}.
There exists $\alpha = \alpha( \gamma)  >0$ such that for each $S >1$, 
\eqb \label{eqn-max-shift}
\BB P\left[ \sup_{t \in \eta^{-1}(\BB D) } |\rho_t| \leq S \right] \geq 1 - O_S(S^{-\alpha})  
\eqe
and for each $\ep \in (0,1)$, 
\eqb \label{eqn-weight-compare}
\BB P\left[  \op{diam}\left( \bigcup_{y\in \mcl V\mcl B^{\mcl G^\ep}_{(\log \ep^{-1})^p}} \eta([x-\ep,x]) \right)  \leq S \omega_{\mcl G}^\ep(x)  ,\: \forall x \in \mcl V\mcl G^\ep(B_{1/2}(0) ) \right] \geq 1 - O_S(S^{-\alpha})  - o_\ep^\infty(\ep) .
\eqe
\end{prop}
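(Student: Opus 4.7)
For Part 1, I will reduce the bound on $\sup_{t \in \eta^{-1}(\BB D)} |\rho_t|$ to a statement purely about the field $h$. Since $\eta$ is space-filling, the event $\{\sup_{t \in \eta^{-1}(\BB D)} |\rho_t| > S\}$ is contained, up to a null event, in the event that there exist $z \in \ol{\BB D}$ and $r > S$ with $h_r(z) + Q\log r = 0$. To estimate the probability of the latter, I decompose
\begin{equation*}
h_r(z) + Q \log r = \left[ h_r(0) + Q \log r \right] + \left[ h_r(z) - h_r(0) \right] .
\end{equation*}
By the conditioned Brownian motion description of $t \mapsto h_{e^{-t}}(0)$ from Section~\ref{sec-cone-prelim}, the first bracket is almost surely strictly positive for every $r > 1$, and standard martingale/tail bounds for the conditioned drifted Brownian motion give that $\inf_{r \geq S}[h_r(0) + Q \log r] \geq \lambda \log S$, for any fixed small $\lambda < Q - \gamma$, with probability $1 - O_S(S^{-\alpha})$. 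For the second bracket, since the circle average of the log singularity $-\gamma \log |\cdot|$ around any $z \in \ol{\BB D}$ at radius $r \geq 1$ equals $-\gamma \log r$ (independent of $z$), the difference $h_r(z) - h_r(0)$ reduces to a difference of whole-plane GFF circle averages whose variance is uniformly bounded in $r \geq 1$ and $z \in \ol{\BB D}$ by standard GFF covariance computations, with Gaussian tails.

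Combining the two estimates via a union bound, for each fixed $z$ the probability that the difference equals zero at any $r > S$ is polynomially small in $S$. A second union bound over a polynomially fine net $\{z_i\}$ of $\ol{\BB D}$, combined with Kolmogorov-type modulus-of-continuity estimates for the Gaussian process $z \mapsto h_r(z)$, promotes this to the desired uniform bound. The main technical challenge will be to carry out this chaining argument cleanly, in particular choosing the scale-dependent parameters so that the discretization error does not destroy the polynomial rate; however, all the ingredients (tail bounds for the conditioned Brownian motion, GFF circle-average covariance, and Kolmogorov continuity) are classical.

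For Part 2, I combine Part 1 with Lemma~\ref{lem-max-cell-diam}. Fix any $q \in (0, 2/(2+\gamma)^2)$. Applying Lemma~\ref{lem-max-cell-diam} with $r = 3/4$, with probability $1 - o_\ep^\infty(\ep)$ every cell $\eta([y-\ep, y])$ intersecting $B_{3/4}(0)$ has diameter at most $\ep^q \leq 1/4$ (for $\ep$ small enough), and hence is contained in $\BB D$. Consequently, for every $x \in \mcl V\mcl G^\ep(B_{1/2}(0))$ we have $\eta(x) \in \BB D$, so Part 1 yields $|\rho_x| \leq S$ for every such $x$ with probability at least $1 - O_S(S^{-\alpha}) - o_\ep^\infty(\ep)$. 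Finally, the definition~\eqref{eqn-vertex-weight} gives
\begin{equation*}
\op{diam}\left( \bigcup_{y \in \mcl V\mcl B^{\mcl G^\ep}_{(\log \ep^{-1})^p}(x)} \eta([y-\ep, y]) \right) \leq |\rho_x| \cdot \omega_{\mcl G}^\ep(x)
\end{equation*}
by inspection of the two cases in the outer maximum (equality in the first case, strict inequality in the second). Combining these two observations yields~\eqref{eqn-weight-compare}.
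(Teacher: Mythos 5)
Your argument for Part~2 is essentially the same as the paper's and is correct (one small imprecision noted below), but your proposed argument for Part~1 contains a genuine gap in the treatment of the second bracket $h_r(z)-h_r(0)$.

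The circle-average embedding $h$ of a $\gamma$-quantum cone is \emph{not} globally equal in law to a whole-plane GFF plus $-\gamma\log|\cdot|$: the paper explicitly states that this identification holds only for $h|_{\BB D}$. For $r>1$, the radial process $h_r(0)$ evolves as the conditioned drifted Brownian motion $A_{-\log r}=B_{-\log r}-\gamma\log r$ with $B_t-(Q-\gamma)t>0$ for $t<0$, and this conditioning changes the law of the radial part on the region $|w|>1$. Consequently your claim ``the difference $h_r(z)-h_r(0)$ reduces to a difference of whole-plane GFF circle averages'' is false for $r\geq S>1$. Writing $h=h^{\op{rad}}+h^{\op{lat}}$, the lateral contribution $[h^{\op{lat}}]_r(z)-[h^{\op{lat}}]_r(0)$ does reduce to GFF circle averages, but there is also a radial contribution $[h^{\op{rad}}]_r(z)-h_r(0)=\frac{1}{2\pi}\int_0^{2\pi}\bigl(h_{|z+re^{i\theta}|}(0)-h_r(0)\bigr)\,d\theta$ that your decomposition must control separately; it is not identically zero because $h_{|\cdot|}(0)$ is a random (conditioned BM) process, not $-\gamma\log|\cdot|$, and the cancellation you invoke does not occur. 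This piece can in principle be controlled via modulus-of-continuity estimates for the conditioned drifted Brownian motion over the short scale $|z+re^{i\theta}|\in[r-1,r+1]$, but that is an additional step your proof omits. The paper sidesteps this entirely: its proof of Part~1 simply observes that $h_r(z)+Q\log r=0$ forces $|h_r(z)+\gamma\log r|=(Q-\gamma)\log r$, and then applies Lemma~\ref{lem-cone-max} with $\zeta<Q-\gamma$; Lemma~\ref{lem-cone-max} is in turn proved by iterating the single-annulus estimate (Lemmas~\ref{lem-gff-avg} and~\ref{lem-cone-avg}, where the GFF comparison is used only inside $\BB D$) across the scales $[R_{e^{k-1}},R_{e^k}]$ using the quantum cone's exact scaling identity~\eqref{eqn-cone-scale}. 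This scale-by-scale organization is precisely what lets the paper avoid ever working with circle averages of the cone field at radii $r>1$ directly.

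For Part~2, the logic is correct and matches the paper, but the sentence ``Applying Lemma~\ref{lem-max-cell-diam} with $r=3/4$, with probability $1-o_\ep^\infty(\ep)$ every cell intersecting $B_{3/4}(0)$ has diameter at most $\ep^q$'' is not what Lemma~\ref{lem-max-cell-diam} gives for a fixed $q$: it gives $1-\ep^{\alpha(q,\gamma)+o_\ep(1)}$. To obtain the $o_\ep^\infty(\ep)$ rate stated in the proposition you must let $q\to 0$ and use that $\alpha(q,\gamma)\to\infty$, as the paper indicates with its parenthetical ``with $q\to 0$''. Your observation about the two cases of the outer maximum in~\eqref{eqn-vertex-weight} giving $\op{diam}(\cdots)\le|\rho_x|\,\omega_{\mcl G}^\ep(x)$ is correct, and the reduction of the containment of cells in $\BB D$ to an application of Part~1 at $z=\eta(x)$ is also correct.
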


Proposition~\ref{prop-max-shift} will be a straightforward consequence of the following lemma.

\begin{lem} \label{lem-cone-max}
Let $h$ be as in Section~\ref{sec-core-setup}, so that $h$ is a circle average embedding of a $\gamma$-quantum cone and let $h_r(z)$ for $r>0$ and $z\in\BB C$ be the circle average of $h$ over $\bdy B_r(z)$. 
For each $\zeta \in (0,1)$, there exists $\alpha=\alpha(\zeta, \gamma)>0$ such that for each $\delta \in (0,1)$, 
\eqb  \label{eqn-cone-max}
\BB P\left[ |h_r(z) + \gamma \log r| \leq  \zeta \log r ,\:  \forall r \geq S ,\: \forall z \in B_{r/2}(0) \right] \geq 1- O_S(S^{-\alpha}) .
\eqe 
\end{lem}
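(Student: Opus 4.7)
The plan is to decompose $h$ into its radial and lateral components, bound each separately, and combine them by a union bound over dyadic scales of $r$.

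Write $h = h^{\mathrm{rad}} + \tilde h$, where $h^{\mathrm{rad}}(z) := h_{|z|}(0)$ is the radial part---encoded by the Brownian motion $B$ via $h^{\mathrm{rad}}_{e^{-t}}(0) = A_t = B_t + \gamma t$---and $\tilde h := h - h^{\mathrm{rad}}$ is the lateral part, which by the definition of the $\gamma$-quantum cone (Section~\ref{sec-cone-prelim}) is independent of $h^{\mathrm{rad}}$ and agrees in law with the corresponding component of a whole-plane GFF $h'$. Taking the circle average at $\partial B_r(z)$ and applying the mean-value identity $\tfrac{1}{2\pi}\int_0^{2\pi}\log|z+re^{i\theta}|\,d\theta = \log r$ (valid whenever $|z|<r$) yields
\begin{equation*}
h_r(z) + \gamma \log r \;=\; \tilde h_r(z) \;+\; \tfrac{1}{2\pi}\int_0^{2\pi} B_{-\log|z+re^{i\theta}|}\,d\theta .
\end{equation*}
For $z \in B_{r/2}(0)$, the exponents $-\log|z+re^{i\theta}|$ all lie in an interval of length at most $\log 3$ around $-\log r$, so the second summand is controlled by $\sup_{|s|\leq\log 3}|B_{-\log r + s}|$.

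For the Brownian summand, I would apply the time-inversion $W_u := u \tilde B(1/u)$ to the BM $\tilde B_s := -B_{-s}$; this converts the event $\{|B_t| \geq (\zeta/2)|t| \text{ for some } |t|\geq \log S\}$ into the small-time event $\{\sup_{u\leq 1/\log S}|W_u| \geq \zeta/2\}$, which has probability $O(S^{-c\zeta^2})$ by the reflection principle. A standard Brownian modulus-of-continuity estimate, applied dyadically on $(-\infty,-\log S]$, further controls the length-$\log 3$ fluctuation $\sup_{|s|\leq\log 3}|B_{t+s} - B_t|$ uniformly in $t \leq -\log S$ with a comparable polynomial tail in $S$. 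The Doob-transform conditioning $B_t - (Q-\gamma)t > 0$ for $t<0$ is a vanishingly weak constraint at large $|t|$ (since the barrier lies well below the typical fluctuation scale $\sqrt{|t|}$) and contributes at most a bounded Radon-Nikodym factor.

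For the lateral term, I use that $\tilde h_r(0) = 0$ by construction to write
\begin{equation*}
\tilde h_r(z) \;=\; [h'_r(z) - h'_r(0)] \;-\; \tfrac{1}{2\pi}\int_0^{2\pi}\bigl(B'_{-\log|z+re^{i\theta}|} - B'_{-\log r}\bigr)\,d\theta,
\end{equation*}
where $B'$ is the radial Brownian motion of $h'$. The scale invariance of the whole-plane GFF modulo additive constants---explicitly, $h'(r\cdot) - h'_r(0) \eqD h'$ under the normalization $h'_1(0)=0$---gives $\sup_{z\in B_{r/2}(0)}|h'_r(z) - h'_r(0)| \eqD \sup_{w\in B_{1/2}(0)}|h'_1(w)|$, an a.s.\ finite quantity with Gaussian upper tails (e.g., by Borell-TIS concentration). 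The Brownian integral is handled as in the previous paragraph. Summing the Gaussian tail bounds over dyadic scales $r\in[2^k S, 2^{k+1}S]$ for $k\geq 0$, together with a Kolmogorov-type chaining estimate filling in each dyadic interval of $r$, produces the bound $1 - O_S(S^{-\alpha})$ with $\alpha = \alpha(\zeta,\gamma) > 0$. The main technical obstacle is the simultaneous control of the lateral term over a continuous range of $r$: pointwise Gaussian concentration at each fixed $r$ is immediate from the scale invariance above, but stitching these estimates across $r$ requires care with the correlation structure of the two-parameter field $(r,z)\mapsto \tilde h_r(z)$, which I expect to be addressable via standard chaining methods.
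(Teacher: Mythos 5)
Your decomposition of $h$ into radial and lateral parts, followed by the mean-value identity to extract the $-\gamma\log r$ term, is a valid and genuinely different route from the paper's. The paper instead observes that $h+\gamma\log|\cdot|$ restricted to $\BB D$ agrees in law with a whole-plane GFF (so a single Borell--TIS/union-bound estimate for the GFF, Lemma~\ref{lem-gff-avg}, handles all scales in $[R_{1/e},1]$), and then iterates this estimate out to scales $R_{e^k}$ using the exact scale invariance~\eqref{eqn-cone-scale} of the $\gamma$-quantum cone. The virtue of the paper's route is precisely that it never touches the Doob-transformed Brownian motion directly: at each application of Lemma~\ref{lem-cone-avg} the rescaled field $h^{e^k}$ again has the law of a circle-average embedding, so the unconditioned (small-time) regime is all one ever needs to analyze.

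The gap in your argument is the treatment of that Doob-transformed radial process. You write that the conditioning $B_t-(Q-\gamma)t>0$ for $t<0$ ``contributes at most a bounded Radon--Nikodym factor,'' but this is not correct as stated. Writing $u=-t$ and $V_u := B_{-u}+(Q-\gamma)u$, the process $V$ is a Brownian motion with positive drift started from $0$ and conditioned (in the Doob sense, as a limit from positive starting points) to stay positive; this law is \emph{singular}, not absolutely continuous, with respect to the unconditioned law on any interval containing $u=0$, and the RN derivative of the Doob $h$-transform $\phi(V_u)/\phi(V_{u_0})$ with $\phi(x)=1-e^{-2(Q-\gamma)x}$ is unbounded whenever $V_{u_0}$ is small. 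Consequently the time-inversion step is also suspect: you apply $W_u := u\tilde B(1/u)$ to $\tilde B_s := -B_{-s}$ and then invoke the reflection principle, but $\tilde B$ is the conditioned process, not a standard BM, so $W$ is not a BM and the reflection principle does not apply to it directly. These issues are probably repairable — one can condition on $V_{\log S}$, note that on the high-probability event $\{V_{\log S}\geq (Q-\gamma)\log S\}$ the forward RN derivative on $[\log S,\infty)$ is within a factor $2$ of $1$, and separately bound $\BB P[V_{\log S}<(Q-\gamma)\log S]$ — but as written the claim elides the central difficulty that the paper's use of cone scale invariance was designed to circumvent. I'd also flag a smaller point: your lateral estimate re-derives (in a more convoluted way) exactly what the paper's Lemma~\ref{lem-gff-avg} gives in one stroke via Borell--TIS on the two-parameter field $(r,z)\mapsto\wh h_r(z)-\wh h_s(0)$ on a single dyadic annulus; splitting off the radial integral before applying Gaussian concentration only adds bookkeeping.
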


Lemma~\ref{lem-cone-max} is proven in Section~\ref{sec-cone-max} using elementary estimates for the circle average process of a GFF, which come from the fact that this process is Gaussian, with explicit covariance structure~\cite[Section 3.1]{shef-kpz}. 

\begin{proof}[Proof of Proposition~\ref{prop-max-shift}]
Recalling the formula~\eqref{eqn-shift-def} for $|\rho_t|$, we see that if the maximum in~\eqref{eqn-max-shift} is larger than $S$, then there is a $z\in\BB D$ and an $r > S$ such that $h_r(z) + Q\log r = 0$. This implies that $|h_r(z)  + \gamma \log r| \geq (Q-\gamma) \log r$ and that $z \in B_{r/2}(0)$ provided $S>2$. The estimate~\eqref{eqn-max-shift} for an appropriate choice of $\alpha$ therefore follows from Lemma~\ref{lem-cone-max} applied with $\zeta  \in (0,Q-\gamma)$. 

To obtain~\eqref{eqn-weight-compare}, we first use Lemma~\ref{lem-max-cell-diam} (with $q\rta 0$) to find that with probability $1- o_\ep^\infty(\ep)$, each cell $\eta([x-\ep,x])$ for $x\in\ep\BB Z$ which intersects $B_{1/2}(0)$ is contained in $\BB D$. The bound~\eqref{eqn-weight-compare} follows from this,~\eqref{eqn-max-shift}, and the second formula for $\omega_{\mcl G}^\ep$ in~\eqref{eqn-vertex-weight}. 
\end{proof}

\subsection{Euclidean displacement of the embedded walk}
\label{sec-euclidean-dist}

Recall the planar map $\rng M_{I^\ep} = M_{I^\ep} \setminus \mcl E(\bdy M_{I^\ep})$ from~\eqref{eqn-interior-graph}. 
As in Definition~\ref{def-walk}, for $\ep \in (0,1)$, let $X^{\mcl G^\ep}$ by a simple random walk on $\mcl G^\ep$ started from 0 and let $X^{\rng M_{I^\ep}}$ be a simple random walk on $\rng M_{I^\ep}$ started from $\BB v$. In this subsection we will apply Markov type theory to bound the Euclidean displacement of these walks under the embeddings defined in Section~\ref{sec-core-setup}. The main result of this subsection is the following proposition. 
We will apply the proposition to deduce that the embedded walk typically takes time at least $\ep^{-1+o_\ep(1)}$ to exit $\BB D$. This is done by taking both the parameters $\zeta$ and $\wh\zeta$ to  be small.

\begin{prop}  \label{prop-euclidean-dist}
For each $\zeta,\wh\zeta \in (0,1)$ with $2\zeta <\wh\zeta$, there exists $\alpha=\alpha(\zeta,\wh\zeta , \gamma)  >0$ such that for each $\ep \in (0,1)$, 
\eqb \label{eqn-sg-euclidean}
\BB P\left[  \max_{j \in [0,\ep^{-1+\wh \zeta} ]_{\BB Z}}  | \eta( X_j^{\mcl G^\ep}) |   \leq   \ep^\zeta   \right]  \geq 1 - O_\ep(\ep^\alpha)
\eqe 
and
\eqb \label{eqn-map-euclidean}
\BB P\left[   \max_{j \in [0,\ep^{-1+\wh\zeta}]_{\BB Z}} \left| \Phi^\ep\left(X_j^{\rng M_{I^\ep}} \right)  \right|  \leq \ep^\zeta \right]  \geq 1 - O_\ep(\ep^\alpha) .
\eqe 
at a rate depending only on $\zeta,\wh\zeta$, and $\gamma$.  
\end{prop}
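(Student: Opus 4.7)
The plan is to combine the Markov-type inequality of Corollary~\ref{cor:UnimodularMarkovType} with the moment estimate of Proposition~\ref{prop-weight-moment}, and then to convert the resulting bound on weighted graph distance into a bound on Euclidean distance using Proposition~\ref{prop-max-shift} and Lemma~\ref{lem-max-cell-diam}. For the mated-CRT bound~\eqref{eqn-sg-euclidean}, I would first apply Corollary~\ref{cor:UnimodularMarkovType} to the unimodular vertex-weighted graph $(\mcl G^\ep,\omega_{\mcl G}^\ep,0)$ from Lemma~\ref{lem-unimodular-weight} with $n=\lfloor\ep^{-1+\wh\zeta}\rfloor$. The hypotheses are met because $\mcl G^\ep$ is hyperfinite (as noted in Section~\ref{sec-markov-type}) and $\deg^{\mcl G^\ep}(0)$ has an exponential tail by~\cite[Lemma~2.2]{gms-harmonic}. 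Using $\deg^{\mcl G^\ep}(0)\ge 1$ to drop the degree factor in~\eqref{eq:unimodularDLP} and inserting Proposition~\ref{prop-weight-moment} gives
\begin{equation*}
\BB E\left[\max_{1\le m\le n}\op{dist}_{\omega_{\mcl G}^\ep}^{\mcl G^\ep}(0,X_m^{\mcl G^\ep})^2\right] \le C^2 n \cdot \ep^{1+o_\ep(1)} = \ep^{\wh\zeta+o_\ep(1)},
\end{equation*}
so Markov's inequality converts this into a tail bound $\BB P[\max_m\op{dist}_{\omega_{\mcl G}^\ep}\ge T]\le \ep^{\wh\zeta+o_\ep(1)}/T^2$ for any $T>0$.

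To turn this into an estimate on the embedded Euclidean displacement, I would fix $\delta\in(0,(\wh\zeta-2\zeta)/4)$, set $S=\ep^{-\delta}$ in Proposition~\ref{prop-max-shift}, and intersect its good event with the event of Lemma~\ref{lem-max-cell-diam} that every cell intersecting $B_{1/2}(0)$ has diameter at most $\ep^q$ for some fixed $q\in(0,2/(2+\gamma)^2)$. The resulting event $G$ has probability $1-O_\ep(\ep^{\alpha_1})$ for some $\alpha_1>0$ depending on $\delta$, $q$ and $\gamma$, and on $G$ the definition~\eqref{eqn-vertex-weight} of $\omega_{\mcl G}^\ep$ forces $\op{diam}(\eta([y-\ep,y]))\le S\,\omega_{\mcl G}^\ep(x)$ for every $y$ within $\mcl G^\ep$-graph distance $(\log\ep^{-1})^{p_0}$ of any vertex $x$ with $\eta(x)\in\BB D$. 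Choosing $T=\ep^{\zeta+\delta}/[4(\log\ep^{-1})^{p_0}]$, the event $\{\max_m\op{dist}_{\omega_{\mcl G}^\ep}\ge T\}$ has probability at most $\ep^{\wh\zeta-2\zeta-2\delta+o_\ep(1)}$ by the tail bound above. On the complement intersected with $G$, a telescoping sum of cell diameters along an optimising weighted path from $0$ to $X_m^{\mcl G^\ep}$ gives Euclidean displacement at most $2ST\le \ep^\zeta/2$, provided the path stays in $B_{1/2}(0)$; a first-exit argument rules out any excursion outside $B_{1/2}(0)$, since such an excursion would contribute Euclidean length $\ge 1/4$ to the sum of cell diameters, and hence weighted length $\ge 1/(4S)\gg T$. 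Combining these facts yields~\eqref{eqn-sg-euclidean} with $\alpha=\min(\alpha_1,\,\wh\zeta-2\zeta-2\delta)$.

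For the map bound~\eqref{eqn-map-euclidean} I would condition on the high-probability event $\{\BB e\in\mcl E(\rng M_{I^\ep})\}$, so that by Lemma~\ref{lem-unimodular-weight} the triple $(\rng M_{I^\ep},\omega_M^\ep,\BB v)$ is a finite reversible planar vertex-weighted graph. The reversible analogue~\eqref{eq:reversibleDLP} of Corollary~\ref{cor:UnimodularMarkovType} together with Proposition~\ref{prop-weight-moment} again yields $\BB E[\max_m\op{dist}_{\omega_M^\ep}^2]\le \ep^{\wh\zeta+o_\ep(1)}$. Each edge of a weighted path in $\rng M_{I^\ep}$ then lifts via $\phi^\ep$ to a $\mcl G^\ep$-path of length $\le(\log\ep^{-1})^{p_0}$ by Theorem~\ref{thm-map-count}\ref{item-map-count-G}; because $p>p_0$ in the definition of $\omega_{\mcl G}^\ep$, every cell on this lifted $\mcl G^\ep$-path lies within the polylogarithmic graph-distance neighbourhood controlling the weight at an endpoint of the original map edge, and the weight-to-diameter comparison of the previous paragraph applies with only an extra factor of $(\log\ep^{-1})^{p_0}$. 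This factor is absorbed into $T$ at the cost of a slightly smaller $\delta$, and the conclusion follows.

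The main obstacle is this last step: the distortion factor $|\rho_x|^{-1}$ in~\eqref{eqn-vertex-weight} can be small in regions where the circle average of $h$ is large, so an optimising weighted path could in principle stray into a region where the weight-to-diameter comparison breaks down, and a naive application of a Markov-type inequality alone would not give polynomial Euclidean bounds. The first-exit argument I sketch above is what ties the Markov-type moment bound to the GFF circle-average estimate (Lemma~\ref{lem-cone-max}) and the SLE cell-size estimate (Lemma~\ref{lem-max-cell-diam}) into a clean polynomial statement.
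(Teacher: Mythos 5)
Your proof is correct and follows essentially the same strategy as the paper: apply the Markov-type bound of Corollary~\ref{cor:UnimodularMarkovType} (the paper isolates this step as Lemma~\ref{lem-weight-dist}) together with the moment estimate of Proposition~\ref{prop-weight-moment} to control the $\omega$-weighted displacement, then upgrade to a Euclidean bound using Proposition~\ref{prop-max-shift} and Lemma~\ref{lem-max-cell-diam}, and finally prevent the optimizing path from escaping $B_{1/2}(0)$. The two places you differ are purely presentational: where you sum single-cell diameters and therefore carry an extra $(\log\ep^{-1})^{p_0}$ factor through the threshold $T$, the paper instead bounds the Euclidean displacement by the sum of the polylog-ball \emph{union} diameters that are built into the definition of $\omega_{\mcl G}^\ep$ (this is precisely the reason the weight is defined with the $(\log\ep^{-1})^p$-ball rather than a single cell), avoiding the extra factor entirely; and where you argue by contradiction that the near-optimal path cannot exit $B_{1/2}(0)$, the paper truncates the path at the first exit vertex $v_*$ and uses the inequality $|\Phi^\ep(v)|\le 4|\Phi^\ep(v_*)|$. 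Both devices implement the same idea and the absorbed polylog is harmless, so these are minor bookkeeping variations rather than a different route.
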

 
To prove Proposition~\ref{prop-euclidean-dist}, we start with a bound for displacement with respect to the vertex-weighted graph distance (with the weights defined as in Section~\ref{sec-weight-def}), then show that this vertex-weighted graph distance is comparable to embedded Euclidean distance. 

\begin{lem} \label{lem-weight-dist}
Define the vertex weightings $\omega_{\mcl G}^\ep$ and $\omega_M^\ep$ as in~\eqref{eqn-vertex-weight} and~\eqref{eqn-map-weight}, respectively, and let
\eqb \label{eqn-weight-dist-def}
d_{\omega_{\mcl G}^\ep} := \op{dist}_{\omega_{\mcl G}^\ep}^{\mcl G^\ep}(\cdot,\cdot) \quad \text{and} \quad d_{\omega_M^\ep} := \op{dist}_{\omega_{M}^\ep}^{\rng M_{I^\ep}}(\cdot,\cdot)
\eqe 
be the associated weighted graph distances as in~\eqref{eqn-weighted-dist}. Then
\eqb \label{eqn-weight-dist}
\BB E\left[ \max_{j\in [0,n]_{\BB Z}} d_{\omega_{\mcl G}^\ep} \left( 0 , X_j^{\mcl G^\ep} \right)^2  \right]  \leq n \ep^{ 1 + o_\ep(1)} .
\eqe
Furthermore, if the exponent $K$ from~\eqref{eqn-coupling-interval} is chosen to be sufficiently large (depending only on $\gamma$) and we set $F^\ep := \{ \BB e \in \mcl E(M_{I^\ep} )  \setminus  \mcl E(\bdy M_{I^\ep} ) \}$, then $\BB P[F^\ep] \geq 1 -O_\ep(\ep^{100})$ and
\eqb \label{eqn-weight-dist-map}
\BB E\left[  \BB 1_{F^\ep}  \max_{j\in [0,n]_{\BB Z}} d_{\omega_M^\ep} \left( \BB v , X_j^{\rng M_{I^\ep} } \right)^2 \right]  \leq n \ep^{ 1 + o_\ep(1)}   . 
\eqe
\end{lem}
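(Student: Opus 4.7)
The plan is to feed the second-moment bounds from Proposition~\ref{prop-weight-moment} into the Markov-type machinery of Section~\ref{sec-markov-type}, using the unimodularity/reversibility established in Lemma~\ref{lem-unimodular-weight}. The argument is essentially a direct application; the only step that needs extra work is the boundary-avoidance estimate $\BB P[F^\ep] \geq 1 - \ep^{K/2+o_\ep(1)}$.

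For \eqref{eqn-weight-dist}, I would apply Corollary~\ref{cor:UnimodularMarkovType} to $(\mcl G^\ep,\omega_{\mcl G}^\ep,0)$. This is legitimate because $\mcl G^\ep$ is planar, Lemma~\ref{lem-unimodular-weight} gives unimodularity, $\BB E[\deg^{\mcl G^\ep}(0)]<\infty$ (the root degree has an exponential tail by~\cite[Lemma~2.2]{gms-harmonic}), and $\mcl G^\ep$ is hyperfinite as a Benjamini--Schramm limit of finite planar maps. The corollary then gives
\[
\BB E\left[ \deg^{\mcl G^\ep}(0) \max_{1\leq m\leq n} d_{\omega_{\mcl G}^\ep}(0,X_m^{\mcl G^\ep})^2 \right] \leq C^2 n \, \BB E\left[ \deg^{\mcl G^\ep}(0)\, \omega_{\mcl G}^\ep(0)^2 \right].
\]
Since $\deg^{\mcl G^\ep}(0)\geq 1$ a.s.\ and the $j=0$ summand is zero, dropping the degree factor on the left and applying Proposition~\ref{prop-weight-moment} on the right gives exactly $n\ep^{1+o_\ep(1)}$.

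For \eqref{eqn-weight-dist-map} I would first establish $\BB P[F^\ep]\geq 1-\ep^{K/2+o_\ep(1)}$. The key input is that the boundary $\bdy M_{I^\ep}$ of the almost-submap associated with a walk interval of length $\lfloor \ep^{-K}\rfloor$ has at most $\ep^{-K/2+o_\ep(1)}$ edges with very high probability; this is a standard consequence of the $n^{1/2}$ scaling of the extremes of the encoding walk together with the exponential face-degree tail (cf.\ the boundary-length bounds used in~\cite{ghs-map-dist}). By Lemma~\ref{lem-uniform-edge}, conditional on the translated pair~\eqref{eqn-translated-pair} the uniform random shift $\theta^\ep$ is independent of $M_{I^\ep}$, so the chance that the root edge lands on one of the at most $\ep^{-K/2+o_\ep(1)}$ boundary edges out of the roughly $\ep^{-K}$ total is at most $\ep^{K/2+o_\ep(1)}$.

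With this in hand, I would apply the reversible form \eqref{eq:reversibleDLP} of the Markov-type inequality to $(\rng M_{I^\ep},\omega_M^\ep,\BB v)$ conditional on $F^\ep$. By Lemma~\ref{lem-unimodular-weight} this is a reversible, planar (inherited from $M$), almost surely finite vertex-weighted random rooted graph, so in particular it is trivially hyperfinite and invariantly amenable, and \eqref{eq:reversibleDLP} applies, giving
\[
\BB E\left[ \max_{1\leq m\leq n} d_{\omega_M^\ep}(\BB v,X_m^{\rng M_{I^\ep}})^2 \,\Big|\, F^\ep \right] \leq C^2 n \, \BB E\left[ \omega_M^\ep(\BB v)^2 \,\big|\, F^\ep \right].
\]
Multiplying by $\BB P[F^\ep]\leq 1$ and using Proposition~\ref{prop-weight-moment} (note $\omega_M^\ep(\BB v)=\omega_{\mcl G}^\ep(\phi^\ep(\BB v))$, whose second moment at the root of the mated-CRT map is $\leq \ep^{1+o_\ep(1)}$) yields the claimed $n\ep^{1+o_\ep(1)}$. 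The main technical obstacle is the boundary-length estimate underlying $\BB P[F^\ep]$; everything else is routine once Lemma~\ref{lem-unimodular-weight}, Corollary~\ref{cor:UnimodularMarkovType}, and Proposition~\ref{prop-weight-moment} are in place.
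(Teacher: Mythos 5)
Your proof follows essentially the same route as the paper: apply Corollary~\ref{cor:UnimodularMarkovType} to $(\mcl G^\ep, \omega_{\mcl G}^\ep, 0)$ and the reversible form~\eqref{eq:reversibleDLP} to $(\rng M_{I^\ep}, \omega_M^\ep, \BB v)$ conditional on $F^\ep$, feed in Proposition~\ref{prop-weight-moment}, and obtain $\BB P[F^\ep]\geq 1-\ep^{K/2+o_\ep(1)}$ from the $\sqrt{|I^\ep|}$-order bound on $\#\mcl E(\bdy M_{I^\ep})$ together with $\#\mcl E(M_{I^\ep})\asymp\ep^{-K}$. The only detail you omit is that $\rng M_{I^\ep}$ need not be connected, so one applies~\eqref{eq:reversibleDLP} componentwise, but this does not affect the argument.
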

\begin{proof}
By Lemma~\ref{lem-unimodular-weight}, we can apply Corollary~\ref{cor:UnimodularMarkovType} and~\eqref{eq:reversibleDLP}, respectively, to the weighted graphs $(\mcl G^\ep  , \omega_{\mcl G}^\ep , 0)$ and $(\rng M_{I^\ep} , \omega_M^\ep , \BB v)$ to find that there is a universal constant $C >0$ such that for each $\ep \in (0,1)$ and each $n\in\BB N$, 
\eqb \label{eqn-mtype}
\BB E\left[ \max_{j\in [0,n]_{\BB Z}} d_{\omega_{\mcl G}^\ep} \left( 0 , X_j^{\mcl G^\ep} \right)^2 \deg^{\mathcal{G}^\ep}(0)  \right]  \leq n C^2  \BB E\left[ \omega_{\mcl G}^\ep(0)^2  \deg^{\mathcal{G}^\ep}(0)  \right]  
\eqe
 and
\eqb \label{eqn-mtype-map}
\BB E\left[ \max_{j\in [0,n]_{\BB Z}} d_{\omega_M^\ep} \left( \BB v , X_j^{\rng M_{I^\ep} } \right)^2 \,|\, F^\ep \right]  \leq n C^2  \BB E\left[ \omega_M^\ep(\BB v)^2 \,|\, F^\ep \right]  .
\eqe 
We note that $\rng M_{I^\ep}$ is not necessarily connected, but this is no problem since we can apply~\eqref{eq:reversibleDLP} in each connected component separately. 
The bound~\eqref{eqn-weight-dist} follows by combining~\eqref{eqn-mtype} with Proposition~\ref{prop-weight-moment}. 

To deduce~\eqref{eqn-weight-dist-map}, we need to get rid of the conditioning in~\eqref{eqn-mtype-map}. To accomplish this, we first note that since the index shift $\theta^\ep$ from~\eqref{eqn-coupling-interval} is uniform on $[0, \ep^{-K} ]_{\BB Z}$, the probability that $[-\ep^{-K/2} , \ep^{-K/2}] \subset I^\ep$ is $1-O_\ep(\ep^{K/2})$. By the proof of~\cite[Lemma 1.11]{ghs-map-dist} (see the discussion just after~\cite[Equation (1.24)]{ghs-map-dist}), if $K > 200$ is chosen to be sufficiently large (depending only on $\gamma$) then it holds with probability at least $1-O_\ep(\ep^{100})$ that each vertex of $M$ which neighbors the root edge $\BB e$ is contained in $\iota_{[-\ep^{-K/2} , \ep^{-K/2}]}\left( M_{[-\ep^{-K/2} , \ep^{-K/2}]} \setminus \bdy M_{[-\ep^{-K/2} , \ep^{-K/2}]} \right)$. If this is the case and $[-\ep^{-K/2} , \ep^{-K/2}] \subset I^\ep$ (which happens with probability $1-O_\ep(\ep^{100})$), then $F^\ep$ occurs. Hence $\BB P[F^\ep] \geq 1 - O_\ep(\ep^{100})$.  
Combining this with Proposition~\ref{prop-weight-moment} allows us to bound the right side of~\eqref{eqn-mtype-map} and thereby deduce~\eqref{eqn-weight-dist-map}.
\end{proof}

\begin{proof}[Proof of Proposition~\ref{prop-euclidean-dist}] 
We will prove the proposition by showing that Euclidean distances can be bounded above in terms of $\omega_{\mcl G}^\ep$- and $\omega_M^\ep$-weighted distances with high probability and applying Lemma~\ref{lem-weight-dist}. To do this we will first define an event which happens with probability at least $1-O_\ep(\ep^\alpha)$, then compare distances on this event. 
\medskip

\noindent\textit{Step 1: definition of a regularity event.}
Let $q := \frac{1}{(2+\gamma)^2}$ (any other $q \in \left(0 , \frac{2}{(2+\gamma)^2}\right)$ would do equally well). 
Also let $\zeta$ and $\wh\zeta$ be as in the statement of the lemma, set $\delta  := (\zeta \wedge (\wh\zeta -\zeta)) /100 $, and let $E^\ep = E^\ep(\zeta,\wh\zeta,q)$ be the event that the following is true.
\begin{enumerate}
\item We have \label{item-dist-event-compare}
\eqb \label{eqn-dist-event-compare}
\op{diam}\left( \bigcup_{y\in \mcl V\mcl B^{\mcl G^\ep}_{(\log \ep^{-1})^p}(x)} \eta([y-\ep,y]) \right)  \leq \ep^{-\delta} \omega_{\mcl G}^\ep(x)  ,\quad \forall x \in \mcl V\mcl G^\ep(B_{1/2}(0) ) .
\eqe
\item In the notation of~\eqref{eqn-weight-dist-def}, \label{item-dist-event-weight}
\eqb \label{eqn-dist-event-weight}
\max_{j\in [0,\ep^{-1+\wh\zeta}]_{\BB Z}} d_{\omega_{\mcl G}^\ep} \left( 0 , X_j^{\mcl G^\ep} \right) \leq \ep^{\zeta+2\delta} \quad\op{and}\quad
\max_{j\in [0,\ep^{-1+\wh\zeta} ]_{\BB Z}} d_{\omega_M^\ep} \left( \BB v , X_j^{\rng M_{I^\ep}} \right) \leq \ep^{\zeta+2\delta} .
\eqe
\item Each cell $\eta([x-\ep,x])$ for $x\in\mcl V\mcl G^\ep(B_{1/2}(0) )$ has Euclidean diameter at most $\ep^q$. \label{item-dist-event-diam}
\item The coupling conditions in Theorem~\ref{thm-map-count} are satisfied with $I^\ep$ as in~\eqref{eqn-coupling-interval}. \label{item-dist-event-coupling}
\end{enumerate}
By Proposition~\ref{prop-max-shift} applied with $S = \ep^{-\delta}$, condition~\ref{item-dist-event-compare} holds except on an event of probability decaying faster than some positive power of $\ep$. By Lemma~\ref{lem-weight-dist} (applied with $n = \lfloor \ep^{-1+\wh\zeta} \rfloor$) and since $\wh\zeta > \zeta+2\delta$, we can apply the Chebyshev inequality to get that the same is true for condition~\ref{item-dist-event-weight}. By Lemma~\ref{lem-max-cell-diam} and our choice of coupling, respectively, the same is also true for conditions~\ref{item-dist-event-diam} and~\ref{item-dist-event-coupling}. Therefore, we can find $\alpha >0$ as in the statement of the lemma such that
\eqbn
\BB P\left[ E^\ep \right] \geq 1 - O_\ep(\ep^\alpha) .
\eqen
\medskip

\noindent\textit{Step 2: comparison of distances.} Henceforth assume that $E^\ep$ occurs. We will show that the events in~\eqref{eqn-sg-euclidean} and~\eqref{eqn-map-euclidean} hold by comparing $d_{\omega_{\mcl G}^\ep}$- and $d_{\omega_M^\ep}$-distances to Euclidean distances. The required analysis is straightforward and elementary. 
Actually, we will give the argument only in the case of $d_{\omega_M^\ep}$---the argument for $d_{\omega_{\mcl G}^\ep}$ is nearly the same, but slightly simpler.  

Suppose $v \in \mcl V(M_{I^\ep} )$ is such that $\Phi^\ep(v) \in \BB D$ (eventually, we will take $v$ to be one of the vertices $X_j^{\rng M_{I^\ep} }$). Let $P^M : [0, |P^M|]_{\BB Z} \rta \mcl V(M)$ be a path in $\rng M_{I^\ep}$ from $\BB v$ to $v$. 
The image under the embedding $\Phi^\ep$ of the path $P^M$ is not necessarily contained in $B_{1/2}(0)$. Thus, in order to apply~\eqref{eqn-dist-event-compare} we let $i_*$ be the smallest $i\in [0,|P^M|]_{\BB Z}$ such that $\Phi^\ep(P^M(i_* + 1)) \not\in B_{1/2}(0)$, or $i_* := |P^M|$ if $\Phi^\ep(P^M)$ is entirely contained in $B_{1/2}(0)$. We also define $v_* := P^M(i_*)$. We observe that $|\Phi^\ep(v)| \leq 1$ (since $\Phi^\ep(v) \in\BB D$) and that by condition~\ref{item-dist-event-diam} in the definition of $E^\ep$, either $v_* = v$ or $|\Phi^\ep(v_*)| \geq 1/2 - o_\ep(1)$. In particular, for small enough $\ep \in (0,1)$ we have
\eqb \label{eqn-first-vertex}
|\Phi^\ep(v)| \leq 4 |\Phi^\ep(v_*)| .
\eqe 

We now concatenate the paths of condition~\ref{item-map-count-G} of Theorem~\ref{thm-map-count} (with $n = \lfloor \ep^{-K} \rfloor$) in $\mcl G^\ep$ between the pairs of vertices $(\phi^\ep( P^M(i-1)  ), \phi^\ep( P^M(i) ))$, which each have length at most $ (\log \ep^{-K})^{p_0}  $. Recalling that $p > p_0$, we obtain a path $P^{\mcl G^\ep}$ in $\mcl G^\ep$ from $0$ to $\phi^\ep( v_* )$ such that
\eqb \label{eqn-G-path-contained}
P^{\mcl G^\ep}\left([0,|P^{\mcl G^\ep}|]_{\BB Z} \right) \subset \bigcup_{i=1}^{ i_* } \mcl B_{(\log \ep^{-1})^p}^{\mcl G^\ep}(\phi^\ep(P^M(i)) . 
\eqe 
Since the cells corresponding to any two consecutive vertices hit by the path $P^{\mcl G^\ep}$ intersect, the Euclidean distance between the cells corresponding to the endpoints of $P^{\mcl G^\ep}$ is at most the diameter of the union of the cells corresponding to the vertices hit by $P^{\mcl G^\ep}$. Note that the starting point of $P^{\mcl G^\ep}$ is 0. We therefore infer from~\eqref{eqn-G-path-contained} that
\eqb \label{eqn-dist-to-sum}
|  \Phi^\ep(v_*) | \leq  \sum_{i=1}^{ i_* }  \op{diam}\left( \bigcup_{y\in \mcl V\mcl B_{(\log \ep^{-1})^p}^{\mcl G^\ep}(\phi^\ep(P^M(i)) }  \eta([y-\ep,y])  \right) .
\eqe 

By the definition of $i_*$ and~\eqref{eqn-coupling-function}, each of the vertices $\phi^\ep(P^M(i))$ for $i\in [0,i_*]_{\BB Z}$ satisfies $\eta(\phi^\ep(P^M(i)) ) = \Phi^\ep(P^M(i) ) \in B_{1/2}(0)$. Therefore, condition~\ref{item-dist-event-compare} in the definition of $E^\ep$ applied to the right side of~\eqref{eqn-dist-to-sum} shows that
\eqbn
| \Phi^\ep(v_*) | 
\leq \ep^{-\delta} \sum_{i=1}^{i_*} \omega_{\mcl G}^\ep\left( \phi^\ep(P^M(i)) \right) 
= \ep^{-\delta} \sum_{i=1}^{i_*} \omega_M^\ep\left( P^M(i)  \right) , 
\eqen
which is at most $\ep^{-\delta}$ times the $\omega_M^\ep$-length of $P^M$. Taking the infimum over all choices of paths $P^M$ from $\BB v$ to $v$ in $\rng M_{I^\ep}$ and recalling~\eqref{eqn-first-vertex} shows that
\eqb \label{eqn-vertex-dist}
 |\Phi^\ep(v)| \leq 4 |\Phi^\ep(v_*)| \leq \ep^{-\delta} d_{\omega_M^\ep} \left( \BB v , v \right) .
\eqe 

Let $j_*$ the smallest $j\in [0,\ep^{-1+\wh\zeta}]_{\BB Z}$ for which $|\Phi^\ep(X_j^{M,\ep} )| \geq 4\ep^{\zeta + \delta}$, or $j_* = \lfloor \ep^{-1+\wh\zeta} \rfloor$ if no such $j$ exists. Applying~\eqref{eqn-vertex-dist} with $v= X_{j_*}^{\rng M_{I^\ep}}$ and recalling condition~\ref{item-dist-event-weight} in the definition of $E^\ep$ gives
\eqbn
\max_{j\in [0,\ep^{-1+\wh\zeta} ]_{\BB Z}} d_{\omega_M^\ep}  \left( \BB v , X_j^{\rng M_{I^\ep} } \right) \leq 4 \ep^{\zeta +\delta} 
\eqen
which is smaller than $\ep^\zeta$ for small enough $\ep$, as required.
\end{proof}

\begin{proof}[Proof of Theorem~\ref{thm-euc-displacement}]
We first deduce the upper bound for the Euclidean displacement of the embedded walk $\eta(X^{\mcl G^1})$ from Proposition~\ref{prop-euclidean-dist} and a scaling argument. 
Fix $\zeta,\wh\zeta \in (0,1)$ with $2\zeta  <\wh\zeta$ and let $\alpha = \alpha( \zeta,\wh\zeta,\gamma)  > 0$ be as in Proposition~\ref{prop-euclidean-dist}. 
Also recall the radii $R_b$ for $b>0$ from~\eqref{eqn-mass-hit-time}. 
Taking $\ep = 1/n$ in Proposition~\ref{prop-euclidean-dist} and applying the scaling property~\eqref{eqn-cone-scale} of the $\gamma$-quantum cone, we find that with probability $1-O_n(n^{-\alpha})$, the embedded walk $\eta(X^{\mcl G^1})$ takes at least $n^{1 -\wh\zeta}$ units of time to exit the Euclidean ball $B_{R_n}(0)$. By Lemma~\ref{lem-cone-hit-tail}, it holds except on an event of probability decaying faster than some negative power of $n$ that $n^{\frac{1}{2-\gamma^2/2} - \wh\zeta} \leq R_n \leq n^{\frac{1}{2-\gamma^2/2} + \wh\zeta}$. Applying this estimate for dyadic values of $n$, using the Borel-Cantelli lemma, and sending $\wh\zeta \rta 0$ shows that a.s.\ 
\eqb \label{eqn-euc-displacement-upper}
\limsup_{n\rta\infty} \frac{\log \max_{1\leq j \leq n} |\eta(X_j^{\mathcal G^1})|}{\log n} \leq \frac{1}{2-\gamma^2/2} .
\eqe 

We now deduce the corresponding lower bound from~\cite[Proposition 3.4]{gm-spec-dim}. The argument is standard, and is very similar to various proofs in~\cite[Section 4.2]{gm-spec-dim}, so we will be terse. For $n\in\BB N$, let $T_n$ be first time $X^{\mcl G^1}$ hits $\mcl G^1(\bdy B_{R_n}(0))$. We will establish an upper bound for $\BB E[T_n \,|\, (h,\eta) ]$, where $h$ and $\eta$ are as in Section~\ref{sec-peanosphere}. By~\cite[Proposition 3.4]{gm-spec-dim}, applied with $\ep=1/n$, together with the scaling property of the $\gamma$-quantum cone, applied as above, there are constants $C,\alpha_0 > 0$, depending only on $\gamma$, such that with probability at least $1-O_n\left( (\log n)^{-\alpha_0} \right)$, the effective resistance from 0 to $\mcl G^1(\bdy B_{R_n}(0))$ in $\mcl G^1$ is at most $C\log n$, i.e., the Green's function of $X^{\mcl G^1}$ stopped at time $T_n$ satisfies
\eqb \label{eqn-green0}
\op{Gr}_{T_n}(0,0) \leq C \op{deg}^{\mcl G^1}(0) \log n .
\eqe 
By reversibility of the Green's function (see, e.g.,~\cite[Exercise 2.1]{lyons-peres}), if~\eqref{eqn-green0} holds then for each $x\in \mcl V\mcl G^1(B_{R_n}(0))$, 
\begin{align}
\op{Gr}_{T_n}(0,x)
&= \frac{\op{deg}^{\mcl G^1}(x )}{\op{deg}^{\mcl G^1}(0)} \op{Gr}_{T_n} (x,0 )\nonumber\\
&= \frac{\op{deg}^{\mcl G^1}(x )}{\op{deg}^{\mcl G^1}(0)} \ol{\BB P}  \left[ \text{walk started at $x$ hits $0$ before $\bdy B_{R_n}(0) $} \,|\, \mcl G^1 \right] \op{Gr}_{T_n}(0,0) \notag\\
&\leq C \op{deg}^{\mcl G^1}(x ) \log n   ,\label{eqn-g-off-diagonal}
\end{align}
i.e., the expected number of times that $X^{\mcl G^1}$ hits $x$ before time $T_n$ is at most $ C \op{deg}^{\mcl G^1}(x ) \log n $.
Since the degree of each vertex of $\mcl G^1$ has an exponential tail~\cite[Lemma 2.2]{gms-harmonic} and by~\cite[Lemma A.4]{ghs-dist-exponent} (which we use to compare a Euclidean ball to a segment of $\eta$), we find that with probability $1-o_n^\infty(n)$, $\op{deg}^{\mcl G^1}(x) \leq (\log n)^2$ for each $x\in \mcl G^1(B_{R_n}(0))$. Plugging this into~\eqref{eqn-g-off-diagonal} and summing over all $x \in \mcl V\mcl G^1(B_{R_n}(0))$ gives that with probability $1-O_n\left( (\log n)^{-\alpha_0} \right)$, 
\eqb \label{eqn-euc-exit-upper}
\BB E\bigl[ T_n \,|\, (h,\eta) \bigr] \leq C (\log n)^3 \# \mcl V \mcl G^1(B_{R_n}(0)) .
\eqe 
By~\cite[Lemma~A.3]{ghs-dist-exponent} and~\eqref{eqn-cone-scale}, for each $\zeta>0$ there is an $\alpha_1 = \alpha_1(\zeta,\gamma) > 0$ such that with probability at least $1-O_n(n^{-\alpha_1})$, we have $\# \mcl V\mcl G^1(B_{R_n}(0)) \leq n^{1+\zeta}$ (here we recall that the cells in $\mcl G^1$ have $\mu_h$-mass 1). Combining this with~\eqref{eqn-euc-exit-upper} and Markov's inequality, we get that $T_n \leq n^{1+2\zeta}$ except on an event of probability decaying faster than some positive power of $(\log n)^{-1}$.

Applying Lemma~\ref{lem-cone-hit-tail} (exactly as in the proof of~\eqref{eqn-euc-displacement-upper}) and the above estimate with $n = n_m = \exp(m^s)$ for $s  > 2/\alpha_0$ and $m\in\BB N$, and taking a union bound over all $m\in\BB N$, we obtain that a.s.\ 
\eqb \label{eqn-euc-displacement-lower}
\liminf_{m \rta\infty} \frac{\log \max_{1\leq j \leq n_m } |\eta(X_j^{\mathcal G^1})|}{ \log n_m } \geq \frac{1}{2-\gamma^2/2} .
\eqe 
Since $\lim_{m\rta\infty} \log n_m / \log n_{m+1} =1$ and $n\mapsto \max_{1\leq j \leq n  } |\eta(X_j^{\mathcal G^1})|$ is increasing, we infer that~\eqref{eqn-euc-displacement-lower} remains true if we replace $n_m$ with a general $n\in\BB N$ and take a liminf as $n\rta\infty$. 
Combining with~\eqref{eqn-euc-displacement-upper} concludes the proof. 
\end{proof}

\subsection{Comparing Euclidean distances to unweighted graph distances}
\label{sec-graph-dist}

To deduce Theorem~\ref{thm-walk-speed} from Proposition~\ref{prop-euclidean-dist}, we need to compare embedded Euclidean distances to (unweighted) graph distances. For this purpose it will be enough to consider $\mcl G^\ep$-graph distances since Theorem~\ref{thm-map-count} allows us to compare such distances to $\rng M_{I^\ep}$-graph distances. 
 We will use the following result, which is part of~\cite[Proposition 4.6]{dg-lqg-dim} (note that, in the notation of~\cite{dg-lqg-dim}, $D_{h,\eta}^\ep(z,w ; \BB D)$ denotes the graph distance in $\mcl G^\ep$ between the cells containing $z$ and $w$ along paths whose corresponding cells are contained in $\BB D$). 
 
\begin{prop}[\!\cite{dg-lqg-dim}] \label{prop-ball-compare}
Let $d_\gamma$ be as in~\eqref{eqn-dist-exponent}. For each $\zeta \in (0,1)$, there exists $\alpha=\alpha(\zeta, \gamma) > 0$ such that for each $\ep \in (0,1)$, 
\eqbn
\BB P\left[  \mcl G^\ep\left(B_{1/2}(0) \right) \subset \mcl B^{\mcl G^\ep}_{\ep^{-1/d_\gamma - \zeta}}(0) \right] \geq 1 - O_\ep(\ep^\alpha)  .
\eqen 
\end{prop}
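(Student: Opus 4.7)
The plan is to reduce the proposition, via the scaling property of the $\gamma$-quantum cone, to a corresponding statement about the unit-spacing map $\mcl G^1$ at a large polynomial Euclidean scale, and then to invoke a quantitative form of the volume growth identity~\eqref{eqn-dist-exponent}.

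First, I would apply the scaling relation~\eqref{eqn-cone-scale} with $b = \ep$: the rescaled field $h^\ep(\cdot) := h(R_\ep \cdot) + Q\log R_\ep - \gamma^{-1}\log\ep$ has the same law as $h$, the $\mu_h$-mass-$\ep$ cells $\eta([x-\ep,x])$ become unit-mass cells of $\mu_{h^\ep}$ (after dividing indices by $\ep$), and the subgraph $\mcl G^\ep(B_{1/2}(0))$ is mapped isomorphically onto $\mcl G^1(B_{R_\ep^{-1}/2}(0))$ with all graph distances preserved. Lemma~\ref{lem-cone-hit-tail} bounds $R_\ep^{-1}$ above by $\ep^{-1/(2-\gamma^2/2)-\zeta'}$ with probability $1 - O_\ep(\ep^\beta)$ for any small $\zeta' > 0$ and a corresponding $\beta > 0$. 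Writing $\xi := (2-\gamma^2/2)/d_\gamma$ and taking $r := R_\ep^{-1}/2$, it therefore suffices to show that for large $r$ the inclusion $\mcl G^1(B_r(0)) \subset \mcl B^{\mcl G^1}_{r^{\xi+\zeta''}}(0)$ holds with probability $1 - O_r(r^{-\alpha'})$ for some $\alpha' > 0$ and some small $\zeta'' > 0$. The target exponent $\ep^{-1/d_\gamma - \zeta}$ then arises upon substituting $r = R_\ep^{-1}/2$ and optimizing the small parameters $\zeta',\zeta''$.

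For the reduced claim I would combine~\eqref{eqn-dist-exponent} with the standard LQG mass estimate $\mu_h(B_r(0)) = r^{2-\gamma^2/2+o_r(1)}$ for the $\gamma$-quantum cone. Heuristically, a graph distance ball $\mcl B^{\mcl G^1}_R(0)$ contains $\asymp R^{d_\gamma}$ unit-mass cells and hence carries LQG mass $\asymp R^{d_\gamma}$; matching this with $\rho^{2-\gamma^2/2}$ forces its Euclidean inradius to be $\asymp R^{1/\xi}$, so inverting, a Euclidean ball of radius $r$ should sit inside a graph distance ball of radius $r^\xi$. Making this rigorous amounts to a quantitative control of the graph distance from $0$ to $\bdy B_r(0)$ in $\mcl G^1$, which is precisely the type of estimate developed in~\cite{dg-lqg-dim,dzz-heat-kernel}.

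The main obstacle is the polynomial rate $O_r(r^{-\alpha'})$, since~\eqref{eqn-dist-exponent} is a priori only an almost-sure limit. The strategy I would follow mirrors the approach of~\cite{dg-lqg-dim}: compare $\mcl G^1$-graph distances across the geometrically spaced annuli $B_{2^{k+1}}(0)\setminus B_{2^k}(0)$, on each of which the restriction of $h$ has a law that is approximately invariant under spatial rescaling modulo a Brownian drift in the circle averages; use a subadditivity/concatenation bound to control the total distance from $0$ to $\bdy B_r(0)$ by a sum of independent per-annulus contributions; and combine Gaussian concentration for the GFF circle averages with the approximate independence of the field on disjoint annuli to obtain stretched-exponential concentration in the log-scale $k$, which sums to a polynomial tail in $r$. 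Combined with Lemma~\ref{lem-cone-hit-tail} and a union bound, this yields the proposition with an explicit $\alpha = \alpha(\zeta,\gamma) > 0$.
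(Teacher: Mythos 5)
The paper does not prove this statement: it is cited verbatim from~\cite{dg-lqg-dim} (indeed the bracketed attribution in the proposition header makes this explicit), so there is no internal proof to compare against. Your task, then, is really to supply a standalone argument, and your sketch amounts to re-deriving the relevant portion of~\cite{dg-lqg-dim}. Evaluated on those terms, there are two substantive issues.

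First, the scaling reduction to $\mcl G^1$ at macroscopic Euclidean scale $r = R_\ep^{-1}/2$ is essentially a change of variables that leaves the content of the statement unchanged --- the quantum cone scaling identity~\eqref{eqn-cone-scale} says precisely that the problem at cell-size $\ep$ and Euclidean scale $1$ is the same as the problem at cell-size $1$ and Euclidean scale $R_\ep^{-1}$ --- so nothing has been simplified. (You also need to be more careful here: $r$ is random and measurable with respect to $h$, and so is correlated with the rescaled field $h^\ep$ that defines your $\mcl G^1$; the correct logic is to use Lemma~\ref{lem-cone-hit-tail} to replace the random $r$ by a deterministic $\bar r \approx \ep^{-1/(2-\gamma^2/2)-\zeta'}$ before applying the distance estimate, not to condition on the value of $r$.)

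Second, and more seriously, the proposed use of~\eqref{eqn-dist-exponent} plus the $\mu_h$-mass estimate is circular for the purpose at hand. The limit~\eqref{eqn-dist-exponent} is itself \emph{derived} in~\cite{dg-lqg-dim} from a quantitative concentration bound for $\mcl G^\ep$-graph distances (equivalently, for the Liouville graph distance $D_{h,\ep}$), and that concentration bound is exactly what Proposition~\ref{prop-ball-compare} encapsulates. Knowing the a.s.\ volume-growth exponent $d_\gamma$ does not by itself give you polynomial-probability control on $\op{dist}^{\mcl G^\ep}(0, x)$ for all cells $x$ intersecting $B_{1/2}(0)$: one needs (i) a distance-to-boundary bound, not a volume bound; (ii) uniformity over all such cells, not just a single path from $0$ to $\partial B_{1/2}(0)$; and (iii) a mechanism that upgrades the a.s.\ statement to a quantitative one. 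Your one-sentence sketch of ``subadditivity across annuli plus Gaussian concentration'' gestures at the strategy used in~\cite{dg-lqg-dim}, but it glosses over the actual machinery there (an FPP-style percolation argument, independence of the GFF across disjoint annuli only after a nontrivial local/global decomposition, and a careful scaling argument in the quantum-cone parametrization). As it stands, the proposal is a plausible heuristic roadmap, not a proof; the correct move is simply to cite~\cite[Proposition~4.6]{dg-lqg-dim}, as the paper does.
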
  

Our main result is an easy consequence of Propositions~\ref{prop-euclidean-dist} and~\ref{prop-ball-compare} together with the lower bound for the speed of the walk from~\cite{ghs-map-dist}. 

\begin{proof}[Proof of Theorem~\ref{thm-walk-speed}]
We will first prove~\eqref{eqn-walk-speed}. 
By Propositions~\ref{prop-euclidean-dist} and~\ref{prop-ball-compare}, for each $\delta \in (0,1)$ there exists $\alpha=\alpha(\delta ,\gamma) > 0$ such that with probability $1-O_\ep(\ep^\alpha)$, 
\eqb \label{eqn-walk-contain}
 \mcl G^\ep\left(B_{1/2}(0) \right) \subset \mcl B_{\ep^{-1/d_\gamma -\delta }}^{\mcl G^\ep}(0) .
\eqe
and
\eqb \label{eqn-use-map-euc}
 \max_{j \in [0,\ep^{-1+ \delta }]_{\BB Z}} \left| \eta \left(X_j^{\mcl G^\ep} \right)  \right|  \leq \frac12 \quad \text{and} \quad
 \max_{j \in [0,\ep^{-1+ \delta }]_{\BB Z}} \left| \Phi^\ep\left(X_j^{\rng M_{I^\ep} } \right)  \right|  \leq \frac12 .
\eqe 

To deduce~\eqref{eqn-walk-speed} from this in the case of the mated-CRT map, for a given $n\in\BB N$, we choose $\ep \in (0,1)$ such that $\ep^{-1 + \delta } = n$. If the events in~\eqref{eqn-walk-contain} and~\eqref{eqn-use-map-euc} hold for this choice of $\ep$ and $\delta \in (0,1)$ is chosen sufficiently small relative to $\zeta$, then 
\eqbn
X^{\mcl G^\ep}([0,n]_{\BB Z}) \subset  \mcl G^\ep\left(B_{1/2}(0) \right) \subset  \mcl B_{n^{1/d_\gamma +\zeta}}^{\mcl G^\ep}(0) 
\eqen
which gives~\eqref{eqn-walk-speed} with $(\mcl G  ,0)$ in place of $(M,\BB v)$ since the law of $\mcl G^\ep$ does not depend on $\ep$. 
  
We now prove~\eqref{eqn-walk-speed} when $(M,\BB v)$ is one of the other four random planar maps listed in Section~\ref{sec-main-results}. 
Recall the functions $\phi^\ep : \mcl V M_{I^\ep} \rta \mcl V\mcl G^\ep_{\ep I^\ep}$ and $\psi^\ep :  \mcl V\mcl G^\ep_{\ep I^\ep} \rta \mcl V M_{I^\ep}$ from~\eqref{eqn-coupling-function}.
If~\eqref{eqn-walk-contain} and~\eqref{eqn-use-map-euc} both hold (which happens with probability $1-O_\ep(\ep^\alpha)$) then since $\Phi^\ep = \eta\circ\phi^\ep$, 
\eqbn
\phi^\ep\left( X^{\rng M_{I^\ep}}([0,\ep^{-1+\delta}]_{\BB Z} ) \right) \subset \mcl G^\ep\left(B_{1/2}(0) \right) \subset \mcl B_{\ep^{-1/d_\gamma -\delta }}^{\mcl G^\ep}(0) .
\eqen
By our choice of coupling (in particular, condition~\ref{item-map-count-M} of Theorem~\ref{thm-map-count}), it holds with probability $1-o_\ep^\infty(\ep)$ that 
\eqbn
\psi^\ep\left( \mcl B_{\ep^{-1/d_\gamma -\delta }}^{\mcl G^\ep}(0) \right) \subset \mcl B_{\ep^{-1/d_\gamma - 2\delta } -1}^{ M_{I^\ep} }(\BB v)
\quad \text{which implies} \quad
\psi^\ep\left( \mcl B_{\ep^{-1/d_\gamma -\delta }}^{\mcl G^\ep}(0) \right) \subset \mcl B_{\ep^{-1/d_\gamma - 2\delta } }^{\rng M_{I^\ep} }(\BB v)
\eqen
Combining the preceding two inequalities with condition~\ref{item-map-count-close} of Theorem~\ref{thm-map-count} shows that
\eqb \label{eqn-interval-walk}
\BB P\left[ \max_{j \in [0,\ep^{-1+\delta}]_{\BB Z}} \op{dist}^{\rng M_{I^\ep}}\left(  \BB v , X^{\rng M_{I^\ep}}_j  \right) \leq \ep^{-1/d_\gamma - 3\delta} \right] \geq 1 - O_\ep(\ep^\alpha) .
\eqe

We will now choose the exponent $K$ from~\eqref{eqn-coupling-interval} large enough to allow us to compare $\rng M_{I^\ep}$ and $M$. By~\cite[Lemma 1.11]{ghs-map-dist}, if we choose $K $ sufficiently large, depending only on $\gamma$, then with probability at least $1-O_\ep(\ep)$ the ``almost inclusion" function $\iota_{I^\ep } : M_{I^\ep } \rta M$ restricts to a graph isomorphism from $\mcl B^{M_{I^\ep }}_{\ep^{-1 }+1}(\BB v) $ to $\mcl B^{M }_{\ep^{-1 }+1}(\BB v) $. Recalling that $\rng M_{I^\ep} = M_{I^\ep} \setminus \mcl E(\bdy M_{I^\ep})$, we see that for such a choice of $K$ it holds with probability $1-O_\ep(\ep)$ that $\iota_{I^\ep }  $ restricts to a graph isomorphism from $\mcl B^{\rng M_{ I^\ep }}_{\ep^{-1 }}(\BB v) $ to $\mcl B^{M }_{\ep^{-1 }}(\BB v) $. 
 

Since $X^{\rng M_{I^\ep}}$ cannot leave $\mcl B^{\rng M_{ I^\ep }}_{\ep^{-1 }}(\BB v) $ in fewer than $\ep^{-1}$ steps, we find that~\eqref{eqn-interval-walk} holds with $M$ in place of $\rng M_{I^\ep}$, for every $\ep \in (0,1)$. We then obtain~\eqref{eqn-walk-speed} for $(M,\BB v)$ by choosing $\ep \in (0,1)$ so that $\ep^{-1+\delta} = n$ and $\delta \in (0,1)$ small enough (depending only on $\zeta$ and $\gamma$) that $\ep^{-1/d_\gamma  -3\delta}  \leq n^{1/d_\gamma + \zeta}$. 

It remains only to prove~\eqref{eqn-walk-speed-a.s.}. It is immediate from~\eqref{eqn-walk-speed} and a union bound over dyadic scales that a.s.\ 
\eqbn
\limsup_{n\rta\infty} \frac{\log \max_{j \in [1,n]_{\BB Z}} X_j^M}{\log n} \leq \frac{1}{d_\gamma} .
\eqen
We now explain exactly how one extracts the corresponding lower bound from the results of~\cite{gm-spec-dim}. The estimates in this case are slightly more delicate since one only has polylogarithmic, rather than polynomial, bounds for probabilities. By~\cite[Theorem 1.7]{gm-spec-dim}, if we let $\sigma_r$ be the exit time of $X^M$ from the ball $\mcl B_r^M(\BB v)$, then we can find constants $\alpha , p  > 0$ such that with probability at least $1-O_r( (\log r)^{-\alpha})$, the \emph{conditional} expectation of $\sigma_r$ given $(M,\BB v)$ is at most $(\log r)^p \#\mcl V\mcl B_r^M(\BB v)$. By the Chebyshev inequality,  
\eqb \label{eqn-exit-upper}
\BB P\left[  \sigma_r \leq (\log r)^{p+\alpha/2} \#\mcl V\mcl B_r^M(\BB v) \right] \geq 1 - O_r\left(\frac{1}{(\log r)^{\alpha/2} } \right) .
\eqe 
Now fix a constant $s  > 2/\alpha$ and for $k\in\BB N$ let $r_k := \exp(k^s)$. By~\eqref{eqn-exit-upper} and the Borel-Cantelli lemma, a.s.\
\eqb \label{eqn-upper-subsequence} 
\sigma_{r_k} \leq (\log r_k)^{p + \alpha/2} \#\mcl V\mcl B_{r_k}^M(\BB v) ,\quad \text{for all large enough $k\in\BB N$.}
\eqe 
On the other hand,~\cite[Theorem 1.6]{dg-lqg-dim} shows that a.s.\ $ \#\mcl V\mcl B_r^M(\BB v) \leq r^{d_\gamma +o_r(1)}$ as $r\rta\infty$. By combining this with~\eqref{eqn-upper-subsequence}, we get that a.s.\ 
\eqbn
\limsup_{k\rta\infty} \frac{\log \sigma_{r_k}}{\log r_k} \leq d_\gamma .
\eqen
Since $\sigma_{r_k} \leq \sigma_r \leq \sigma_{r_{k+1}}$ for $r\in [r_k,r_{k+1}]$ and $(\log r_{k+1})/(\log r_k) \rta 1$ as $k\rta\infty$, this implies that a.s.\ $\limsup_{r \rta\infty} \log \sigma_r/\log r \leq d_\gamma$, which in turn implies the lower bound in~\eqref{eqn-walk-speed-a.s.}.
\end{proof}

\begin{remark} \label{remark-spec-dim}
Using the upper bound in Theorem~\ref{thm-walk-speed-uipt}, one can show that a.s.\ the conditional probability given $M$ that $X^M$ returns to its starting point after $n$ steps is at least $n^{-1+o_n(1)}$. This was originally established in~\cite[Theorem 1.7]{lee-conformal-growth} for a more general class of random planar maps and with a polylogarithmic error instead of an $n^{o_n(1)}$ error (see~\cite[Appendix A]{gm-spec-dim} for an explanation of why~\cite[Theorem 1.7]{lee-conformal-growth} applies to planar maps with multiple edges and/or self-loops allowed). 
To obtain an alternative proof of the return probability lower bound, one uses the upper bound in Theorem~\ref{thm-walk-speed-uipt} together with a standard calculation using reversibility and H\"older's inequality (see~\cite[Lemma 4.7]{gm-spec-dim} or the proof of~\cite[Corollary 15]{benjamini-curien-uipq-walk}).
This lower bound can be combined with the corresponding upper bound~\cite[Theorem 1.5]{gm-spec-dim} to show that the a.s.\ spectral dimension of $M$ is 2, i.e., the return probability after $n$ steps is a.s.\ $n^{-1+o_n(1)}$. 
\end{remark}
 
\section{Some technical estimates}
\label{sec-estimates}
 
In this section we complete the proofs of some technical estimates which are stated in Section~\ref{sec-core-argument}.

\subsection{Proof of Proposition~\ref{prop-weight-moment}}
\label{sec-log-ball-moment}

Proposition~\ref{prop-weight-moment} is an immediate consequence of the following estimate for $\mcl G^\ep$.  

\begin{lem} \label{lem-log-ball-moment}
Let $h$ be as in Section~\ref{sec-core-setup}, so that $h$ is a circle average embedding of a $\gamma$-quantum cone.
For each $p , q  , \zeta > 0$, and each $\ep \in (0,1)$,
\eqb \label{eqn-log-ball-moment}
\BB E\left[ \left(  \min\left\{ \ep^{-q} , \op{diam}\left(  \bigcup_{x\in \mcl V\mcl B_{(\log \ep^{-1})^p }^{\mcl G^\ep}(0)} \eta([x-\ep,x]) \right) \right\} \right)^{2 +\zeta } \right] \leq \ep^{1 + o_\zeta(1)  +o_\ep(1)}
\eqe 
with the rate of the $o_\zeta(1)$ depending only on $p,q,$ and $\gamma$ and the rate of the $o_\ep(1)$ depending only on $p$, $q$, $\gamma$, and $\zeta$. 
\end{lem}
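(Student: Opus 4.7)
The plan is to combine a truncation argument with a ``good event'' analysis that converts the diameter bound into an area bound via the ``roughly round'' property of space-filling SLE cells, and then to control the area using the mass-transport principle. Set $D := \op{diam}\bigl(\bigcup_{x \in B_0} \eta([x-\ep, x])\bigr)$ and $B_0 := \mcl V\mcl B_{(\log \ep^{-1})^p}^{\mcl G^\ep}(0)$. The elementary inequality $\min\{\ep^{-q}, D\}^{2+\zeta} \leq \ep^{-q\zeta} D^2$ reduces the task to showing $\BB E[D^2 \BB 1_E] \leq \ep^{1+o_\ep(1)}$ on a good event $E$ whose complement has a polynomial probability bound $\BB P[E^c] \leq \ep^{q(2+\zeta) + 1 + o_\ep(1)}$: then the $E^c$-contribution is at most $\ep^{-q(2+\zeta)} \BB P[E^c] \leq \ep^{1 + o_\ep(1)}$, while the main contribution is at most $\ep^{-q\zeta} \ep^{1+o_\ep(1)} = \ep^{1 + o_\zeta(1) + o_\ep(1)}$.

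I will take $E$ to be the event that (i) every cell $\eta([x-\ep, x])$ for $x \in B_0$ is contained in $B_{1/2}(0)$; (ii) every such cell is roughly round, i.e.\ $\op{diam}(\eta([x-\ep, x]))^2 \leq c_0 \op{Area}(\eta([x-\ep, x]))$ for a universal constant $c_0$; and (iii) $|B_0| \leq (\log \ep^{-1})^{p_*}$ for some $p_* = p_*(p,\gamma)$. Part~(i) will follow by induction on graph distance from Lemma~\ref{lem-max-cell-diam} (the maximum diameter of cells intersecting $B_1(0)$ is $\leq \ep^{q_0}$ with high probability for $q_0$ small), since cells in $B_0$ then lie within Euclidean distance $|B_0| \cdot \ep^{q_0}$ of the cell at $0$; part~(ii) is the roughly round cell estimate from \cite[Section~3.2]{ghm-kpz}; and part~(iii) follows from the volume-growth bound $|\mcl V\mcl B_r^{\mcl G^\ep}(0)| \leq r^{d_\gamma + o_r(1)}$ proved in~\cite{dg-lqg-dim}. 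On $E$, the set $U := \bigcup_{x \in B_0} \eta([x-\ep, x])$ is a connected union whose cells are each reachable from the cell at $0$ by a $\mcl G^\ep$-path of length $\leq (\log \ep^{-1})^p$, so tracing such a path cell by cell yields $D \leq 2 (\log \ep^{-1})^p \delta_{\max}$, where $\delta_{\max}$ is the maximum cell diameter in $B_0$; combining with~(ii) gives $D^2 \leq 4 c_0 (\log \ep^{-1})^{2p} \op{Area}(U)$.

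It then remains to bound $\BB E[\op{Area}(U) \BB 1_E]$. Here I will apply the mass-transport principle to the unimodular random graph $(\mcl G^\ep, 0)$ (Lemma~\ref{lem-unimodular-weight}) with the transport function $F(G, x, y) := \op{Area}(\eta([y-\ep, y])) \BB 1_{\op{dist}^G(x,y) \leq (\log \ep^{-1})^p}$, which yields the identity $\BB E[\op{Area}(U)] = \BB E\bigl[\op{Area}(\eta([-\ep, 0])) \cdot |B_0|\bigr]$. Using the polynomial tail on $|B_0|$ together with H\"older's inequality to handle the event that $|B_0|$ is atypically large, the problem reduces to showing $\BB E[\op{Area}(\eta([-\ep, 0]))] \leq \ep^{1+o_\ep(1)}$. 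The hard part will be this final LQG area estimate: heuristically, the cell at $0$ has $\mu_h$-mass exactly $\ep$, and the $-\gamma \log$ singularity of the field at $0$ makes the LQG measure very dense near $0$, so the cell should sit inside a ball of radius $R_\ep^{1+o_\ep(1)}$ with $R_\ep \approx \ep^{1/(2-\gamma^2/2)}$ by Lemma~\ref{lem-cone-hit-tail}, giving Euclidean area at most $\ep^{2/(2-\gamma^2/2) + o_\ep(1)}$ --- much smaller than $\ep^{1+o_\ep(1)}$ since $2/(2-\gamma^2/2) > 1$ for all $\gamma \in (0,2)$. Rigorously, this should follow from the scaling identity~\eqref{eqn-cone-scale} combined with standard Gaussian multiplicative chaos moment estimates for the $\mu_h$-mass of small balls around the origin of a $\gamma$-quantum cone, in the spirit of the proof of Lemma~\ref{lem-cone-max}. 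Assembling the pieces then yields $\BB E[D^2 \BB 1_E] \leq \ep^{1+o_\ep(1)}$, completing the proof.
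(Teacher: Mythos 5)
Your mass-transport step does not go through as stated, and this is a genuine gap. The transport function $F(G,x,y)=\op{Area}(\eta([y-\ep,y]))\,\BB 1_{\op{dist}^G(x,y)\leq(\log\ep^{-1})^p}$ is not a function of the isomorphism class of $(\mcl G^\ep,x,y)$, so invoking the MTP for the unweighted graph $(\mcl G^\ep,0)$ requires the cell areas $x\mapsto\op{Area}(\eta([x-\ep,x]))$ to form a unimodular vertex weighting. They do not: the law of $(h,\eta)$ is stationary under $(h,\eta)\mapsto(h(\cdot+\eta(t)),\eta(\cdot+t)-\eta(t))$ only \emph{modulo rotation and rescaling}, which is exactly ``inconvenience (1)'' that Section~\ref{sec-weight-def} identifies and that the factor $|\rho_x|^{-1}$ in the weight~\eqref{eqn-vertex-weight} is designed to fix. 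Concretely, because of the $-\gamma\log|\cdot|$ singularity at the origin, cells near $\eta(0)=0$ tend to be Euclidean-small, so cell area is not a stationary function of the index. Lemma~\ref{lem-unimodular-weight} establishes unimodularity only for the rescaled weight $\omega_{\mcl G}^\ep$; if you instead transport the rescaled areas $|\rho_y|^{-2}\op{Area}(\eta([y-\ep,y]))$, the MTP applies but the resulting identity no longer produces $\BB E[\op{Area}(U)]$, and controlling the discrepancy requires the $|\rho_y|$ estimates of Proposition~\ref{prop-max-shift}, which essentially rederives the analysis you were trying to avoid. The ``roughly round'' step also needs care: the diameter-to-area comparison from~\cite{ghm-kpz} is a high-probability estimate with only a polynomial tail, not a deterministic bound with a universal $c_0$, whereas your truncation scheme needs $\BB P[E^c]$ to beat $\ep^{q(2+\zeta)+1}$ for \emph{every} fixed $q$.

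For comparison, the paper takes a more direct route that bypasses both the MTP and the area conversion entirely: it applies the quantum-cone scaling relation~\eqref{eqn-cone-scale} with $b=\ep^{1-\delta}$, uses Lemma~\ref{lem-ball-contained} (a consequence of the cell-diameter bound of Lemma~\ref{lem-max-cell-diam}) to show that the polylogarithmic ball of cells is contained in $B_{rR_{\ep^{1-\delta}}}(0)$ with probability $1-o_\ep^\infty(\ep)$, and then computes $\BB E[R_{\ep^{1-\delta}}^{2+\zeta}]$ in closed form via optional stopping for an exponential martingale of the drifted Brownian motion encoding the circle-average process. This also shows why the heuristic at the end of your proposal is off: $\BB E[R_\ep^{2+\zeta}]$ is of order $\ep^{1+o(1)}$, dominated by atypical events where $R_\ep$ is far larger than its median scale $\ep^{1/(2-\gamma^2/2)}$, so $\BB E[\op{Area}(\eta([-\ep,0]))]$ is indeed of order $\ep^{1+o(1)}$ and \emph{not} $\ep^{2/(2-\gamma^2/2)+o(1)}$ as ``typical cell at a $\gamma$-thick point'' reasoning suggests; the means and medians here are on very different scales.
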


\begin{proof}[Proof of Proposition~\ref{prop-weight-moment}, assuming Lemma~\ref{lem-log-ball-moment}]
By Lemma~\ref{lem-log-ball-moment} and the definition~\eqref{eqn-vertex-weight} of $\omega_{\mcl G}^\ep$, we have $\BB E[ \omega_{\mcl G}^\ep(0)^{2+\zeta} ]\leq \ep^{ 1+ o_\zeta(1) + o_\ep(1)}$ for each $\zeta > 0$. By~\cite[Lemma 2.2]{gms-harmonic}, the law of $\op{deg}^{\mcl G^\ep}(0)$ does not depend on $\ep$ and has an exponential tail. Hence we can apply H\"older's inequality, then send $\zeta \rta 0$, to get that $\BB E[ \omega_{\mcl G}^\ep(0)^2 \op{deg}^{\mcl G^\ep}(0) ]\leq \ep^{ 1+   o_\ep(1)}$. By the definition~\eqref{eqn-map-weight} of $\omega_M^\ep$, we have $\omega_M^\ep(\BB v) = \omega_{\mcl G}^\ep(0)$, and hence also $\BB E\left[ \omega_M^\ep(\BB v)^2 \right] \leq \ep^{ 1+o_\ep(1)}$. 
\end{proof}

It remains to prove Lemma~\ref{lem-log-ball-moment}. We will first apply Lemma~\ref{lem-max-cell-diam} to lower-bound the minimal number of cells in a path from 0 to $\mcl G^\ep(\bdy B_r(0))$ for fixed $r\in (0,1)$; then apply the scaling property of the $\gamma$-quantum cone discussed in Section~\ref{sec-lqg-prelim} to transfer from a macroscopic ball to a ball of radius $\ep^{1+o_\ep(1)}$. 

\begin{lem} \label{lem-ball-contained}
Suppose $h$ is a circle average embedding of a $\gamma$-quantum cone. For each fixed $r \in (0,1)$, each $p > 0$, and each $\ep \in (0,1)$,  
\eqb \label{eqn-ball-contained}
\BB P\left[ \bigcup_{x\in \mcl V\mcl B_{(\log \ep^{-1})^p}^{\mcl G^\ep}\left( 0  \right)} \eta([x-\ep,x])  \subset    B_r(0 )  \right] \geq 1 - o_\ep^\infty(\ep) 
\eqe 
at a rate depending only on $r$ and $p$. 
\end{lem}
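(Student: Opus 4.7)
The plan is to combine Lemma~\ref{lem-max-cell-diam} with the elementary fact that adjacent vertices of $\mcl G^\ep$ correspond to cells that share a non-trivial boundary arc in $\BB C$, and in particular intersect. This means each step along a path in $\mcl G^\ep$ moves the Euclidean position by at most the Euclidean diameter of a cell, so that a path of polylogarithmic graph-distance length has Euclidean extent bounded by (number of steps) $\times$ (maximum cell diameter along the path). Since we may take the cell-diameter bound to be as small as $\ep^q$ for any $q < 2/(2+\gamma)^2$, and the probability that this bound fails decays like $\ep^{\alpha(q,\gamma)}$ with $\alpha(q,\gamma)\to\infty$ as $q\to 0$, we can absorb any given negative power of $\ep$ by taking $q$ sufficiently small.

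More precisely, I would fix $s > 0$ (arbitrary) and $r' \in (r,1)$, and use the divergence $\alpha(q,\gamma) \to \infty$ noted just after Lemma~\ref{lem-max-cell-diam} to choose $q = q(s,\gamma) \in (0, 2/(2+\gamma)^2)$ with $\alpha(q,\gamma) > s+1$. Lemma~\ref{lem-max-cell-diam} applied with this $q$ and with radius $r'$ then gives that the event
\[
E^\ep := \left\{ \op{diam}\bigl(\eta([x-\ep,x])\bigr) \leq \ep^q \text{ for every } x \in \mcl V\mcl G^\ep(B_{r'}(0)) \right\}
\]
satisfies $\BB P[E^\ep] \geq 1 - \ep^{\alpha(q,\gamma) + o_\ep(1)} \geq 1 - \ep^s$ for all $\ep$ sufficiently small (depending on $s$, $r$, $p$, and $\gamma$). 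Since $s$ was arbitrary, this bound will produce the $o_\ep^\infty(\ep)$ estimate.

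To conclude, I would show that on $E^\ep$, for all sufficiently small $\ep$, every path $0 = x_0, x_1, \dots, x_m$ in $\mcl G^\ep$ with $m \leq N := (\log \ep^{-1})^p$ satisfies $\bigcup_{j \leq m} \eta([x_j-\ep, x_j]) \subset B_{(m+1)\ep^q}(0)$, by induction on $m$. The base case uses that $\eta(0) = 0$ lies in $\eta([-\ep,0])$, which has diameter at most $\ep^q$ on $E^\ep$. For the inductive step, by the adjacency definition of $\mcl G^\ep$ the cells $\eta([x_m-\ep,x_m])$ and $\eta([x_{m+1}-\ep,x_{m+1}])$ intersect; by the inductive hypothesis the former lies in $B_{(m+1)\ep^q}(0) \subset B_{r'}(0)$, so the latter also meets $B_{r'}(0)$ and therefore has diameter at most $\ep^q$ on $E^\ep$, forcing it into $B_{(m+2)\ep^q}(0)$. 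Taking $\ep$ small enough that $(N+1)\ep^q < r$ (possible since $\ep^q$ decays polynomially while $N$ grows polylogarithmically) gives the desired containment. The only (minor) obstacle is making sure the induction closes, i.e.\ that the path never exits $B_{r'}(0)$ so that the diameter bound from $E^\ep$ applies at every step; this is taken care of by the cushion $r' - r > 0$ combined with $(N+1)\ep^q = o_\ep(1)$.
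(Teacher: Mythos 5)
Your proposal is correct and follows essentially the same approach as the paper: fix $r' \in (r,1)$, use Lemma~\ref{lem-max-cell-diam} with small $q$ (exploiting $\alpha(q,\gamma)\to\infty$ as $q\to 0$) to bound cell diameters in $B_{r'}(0)$ by $\ep^q$ with failure probability smaller than any given power of $\ep$, and then observe that a path of length $(\log\ep^{-1})^p$ has Euclidean extent at most $(\log\ep^{-1})^p \cdot \ep^q = o_\ep(1)$, so the union of cells along such a path cannot escape $B_r(0)$. Your explicit induction is just a slightly more careful phrasing of the paper's one-line observation that any path from $0$ to $\bdy B_r(0)$ must have length at least $c\,\ep^{-q}$ on the good event.
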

\begin{proof}
Fix $r' \in (r ,1)$. By Lemma~\ref{lem-max-cell-diam}, for $q \in \left( 0 , \frac{2}{(2+\gamma)^2} \right)$, there is an explicit exponent $\alpha(q) = \alpha(q,\gamma) > 0$ such that $\alpha(q) \rta \infty$ as $q\rta 0$ and with probability at least $1- \ep^{\alpha(q) + o_\ep(1)}$, 
\eqbn
 \max_{x\in \mcl V\mcl G^\ep( B_{r'}(0) ) } \op{diam}\left( \eta([x-\ep,x]) \right) \leq \ep^{q }  .
\eqen
If this is the case, then each path in $\mcl G^\ep$ from 0 to $\mcl V\mcl G^\ep(\bdy B_r(0))$ must have length at least $c \ep^{-q}$ for a constant $c = c(r) > 0$. This implies that for small enough $\ep  \in (0,1)$ (how small depends only on $p,q,r,r'$), 
\eqbn
\bigcup_{x\in \mcl V\mcl B_{(\log \ep^{-1})^p}^{\mcl G^\ep}\left( 0  \right)} \eta([x-\ep,x]) \subset \bigcup_{x\in \mcl V\mcl B_{c\ep^{-q}}^{\mcl G^\ep}\left( 0  \right)} \eta([x-\ep,x])  \subset B_r(0). 
\eqen
Therefore,
\eqbn
\BB P\left[\bigcup_{x\in \mcl V\mcl B_{(\log \ep^{-1})^p}^{\mcl G^\ep}\left( 0  \right)} \eta([x-\ep,x])  \subset B_r(0) \right] \geq   1 -  \ep^{ \alpha(q) + o_\ep(1) }  ,\quad \forall q \in \left(0 , \frac{2}{(2+\gamma)^2} \right)  .
\eqen 
Since $\alpha(q) \rta \infty$ as $q\rta 0$, this implies~\eqref{eqn-ball-contained}. 
\end{proof}

\begin{proof}[Proof of Lemma~\ref{lem-log-ball-moment}]
Let $R_b$ for $b >0$ be as in~\eqref{eqn-mass-hit-time}, so that the field/curve pair $(h^b,\eta^b)$ defined by
\eqbn
h^b(\cdot) = h(R_b\cdot) +Q\log R_b - \frac{1}{\gamma} \log b \quad \op{and} \quad \eta^b  := R_b^{-1} \eta 
\eqen
agrees in law with $(h,\eta)$. Now fix $\delta \in (0,1)$ (which we will eventually send to 0) and take $b = \ep^{1-\delta}$. If we let $\mcl G^{\ep^{1-\delta} ,\ep^{\delta}}$ be the mated-CRT map with cell size $\ep^{\delta}$ associated with $(h^{\ep^{1-\delta}} , \eta^{\ep^{1-\delta}})$, then the mapping $z\mapsto R_{\ep^{1-\delta}}  z$ takes the cells of $\mcl G^{\ep^{1-\delta},\ep^{\delta}} $ bijectively to the cells of $\mcl G^\ep $ and induces an isomorphism of these graphs. Therefore, Lemma~\ref{lem-ball-contained} applied with $\ep^{\delta}$ in place of $\ep$, $p$ replaced by a slightly larger parameter, and $(h^{\ep^{1-\delta}} , \eta^{\ep^{1-\delta}})$ in place of $(h,\eta)$ shows that with probability $1-o_\ep^\infty(\ep)$, 
\eqb \label{eqn-log-ball-contained}
\bigcup_{x\in \mcl V\mcl B_{(\log \ep^{-1})^p}^{\mcl G^\ep}\left( 0  \right)} \eta([x-\ep,x])  \subset B_{r R_{\ep^{1-\delta}}}(0)
  \quad \text{and so} \quad
  \op{diam}\left( \bigcup_{x\in \mcl V\mcl B_{(\log \ep^{-1})^p }^{\mcl G^\ep}(0)} \eta([x-\ep,x]) \right)  \leq  r R_{\ep^{1-\delta}} .
\eqe 

We now estimate $\BB E\left[ R_{\ep^{1-\delta}}^{2+\zeta} \right]$ for a given choice of $\zeta>0$. The discussion in Section~\ref{sec-cone-prelim} shows that $-\log R_{\ep^{1-\delta}}$ has the same law as the first time $t > 0$ that a standard linear Brownian motion with negative linear drift $-(Q-\gamma) t$ hits $\frac{1}{\gamma} \log \ep^{1-\delta}$. Let $W_t = B_t - (Q-\gamma) t$ be such a drifted Brownian motion and let $T_\ep$ be this hitting time. We seek to estimate $\BB E[\exp(-(2+\zeta) T_\ep)]$. We will do this via a similar argument as in~\cite[Section 4.1]{shef-kpz}.

For each $\beta \in \BB R$, the process $t\mapsto \exp(\beta B_t - \beta^2 t/2)$ is a martingale. Using the optional stopping theorem and the fact that $B_{T_\ep} =   (Q-\gamma) T_\ep    + \frac{1}{\gamma} \log \ep^{1-\delta}$, we get
\eqb \label{eqn-log-ball-R}
\BB E\left[ \exp\left( \left( \beta (Q-\gamma) - \frac{\beta^2}{2} \right) T_\ep \right) \right] = \ep^{-\frac{\beta}{\gamma}(1-\delta)} .
\eqe 
If we choose $\beta = - ( 4 -  \gamma^2 - \sqrt{16 + \gamma^4 + 8 \gamma^2 (1 + \zeta)})/(2 \gamma)$, then the coefficient on $T_\ep$ on the left is equal to $-2-\zeta$. Moreover, $\beta =  -\gamma + o_\zeta(1)$ (with the rate of the $o_\zeta(1)$ depending only on $p,q,\gamma$) so the right side of~\eqref{eqn-log-ball-R} is equal to $\ep^{ -(1-\delta)(1+o_\zeta(1)) }$.  Therefore,
\eqb \label{eqn-radius-moment}
\BB E\left[ R_{\ep^{1-\delta}}^{2+\zeta}  \right]  = \ep^{ -(1-\delta)(1+o_\zeta(1)) }.
\eqe 
Combining~\eqref{eqn-log-ball-contained} and~\eqref{eqn-radius-moment} and sending $\delta\rta 0$ yields~\eqref{eqn-log-ball-moment}. Note that the truncation by $\ep^{-q}$ is needed since~\eqref{eqn-log-ball-contained} only holds with probability $1-o_\ep^\infty(\ep)$, not with probability 1.  
\end{proof}

\subsection{Proof of Lemma~\ref{lem-cone-max}}
\label{sec-cone-max}

To prove Lemma~\ref{lem-cone-max}, we will first apply standard estimates for Gaussian processes to estimate the circle average process of a whole-plane GFF (Lemma~\ref{lem-gff-avg}). We will then transfer this to estimates for the restriction to $\BB D$ of the field associated with a $\gamma$-quantum cone (Lemma~\ref{lem-cone-avg}) and finally use the scaling property~\eqref{eqn-cone-scale} to conclude.

\begin{lem} \label{lem-gff-avg}
Let $\wh h$ be a whole-plane GFF normalized so that its circle average over $\bdy\BB D$ is 0. 
Let $\{\wh h_r(z) : r > 0, z\in\BB C\}$ be a continuous modification of the circle average process of $\wh h$ (which exists by~\cite[Proposition 3.1]{shef-kpz}).
For each $\zeta \in (0,1)$ and $\delta\in(0,1)$,  
\eqb \label{eqn-gff-avg}
\BB P\left[ |\wh h_r(z)| \leq (\log  \delta^{-1} )^{1/2+\zeta} ,\: \forall r \in [\delta , 1] ,\: \forall z\in B_r(0)  \right] \geq 1 - o_\delta^\infty(\delta) 
\eqe  
at a rate depending only on $\zeta$.  
\end{lem}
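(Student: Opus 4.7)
The plan is to decompose $\wh h_r(z) = \wh h_r(0) + (\wh h_r(z) - \wh h_r(0))$ and control the radial part $\wh h_r(0)$ and the transversal fluctuation $\wh h_r(z) - \wh h_r(0)$ separately. This split is essential: a naive Dudley plus Borell--TIS argument applied to the joint Gaussian process $(z,r) \mapsto \wh h_r(z)$ would yield only an $O(\log \delta^{-1})$ bound, since the variance grows to $\log \delta^{-1}$ and the Dudley integral over a $\sqrt{\log \delta^{-1}}$-diameter index set costs an additional $\sqrt{\log \delta^{-1}}$ factor. Isolating the radial part, whose fluctuations genuinely are of order $\sqrt{\log \delta^{-1}}$, from the transversal fluctuation, which is bounded uniformly in scale by scale invariance, recovers the sharp Brownian exponent.

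For the radial part I use the standard fact (see \cite[Section 3.1]{shef-kpz}) that $(t \mapsto \wh h_{e^{-t}}(0))_{t \geq 0}$ is a standard linear Brownian motion from $0$. The reflection principle gives
\[
\BB P\left[\sup_{r\in[\delta,1]}|\wh h_r(0)| \geq \tfrac{1}{2} (\log \delta^{-1})^{1/2+\zeta}\right] \leq 4 \exp\left(-\tfrac{1}{8} (\log \delta^{-1})^{2\zeta}\right),
\]
which is $o_\delta^\infty(\delta)$.

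For the transversal part I exploit scale and translation invariance of the whole-plane GFF modulo additive constant: for any $r > 0$, the centered Gaussian process $z \mapsto \wh h_r(rz) - \wh h_r(0)$ for $z \in \BB D$ has the same law as $\wh h_1 - \wh h_1(0) = \wh h_1$ restricted to $\BB D$ (using the normalization $\wh h_1(0)=0$). Thus $M_r := \sup_{z \in B_r(0)} |\wh h_r(z) - \wh h_r(0)|$ has a fixed law independent of $r$, and Borell--TIS applied to $\wh h_1|_{\BB D}$ (a Gaussian process with uniformly bounded variance) gives $\BB P[M_r \geq A] \leq C e^{-cA^2}$. Take a dyadic net $r_k = 2^{-k}$ for $k \in [0, \lceil \log_2 \delta^{-1} \rceil]$, union bound over scales to control $\max_k M_{r_k}$, then interpolate between adjacent scales using the variance bound $\op{Var}(\wh h_r(z) - \wh h_{r_k}(z)) \leq \log(r_k/r) \leq \log 2$ for $r \in [r_{k+1}, r_k]$, which follows from the Brownian motion structure of $(t \mapsto \wh h_{e^{-t}}(z))$. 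A further Gaussian supremum estimate plus union bound over the $O(\log \delta^{-1})$ scales gives $\sup_{r \in [\delta,1],\, z \in B_r(0)} |\wh h_r(z) - \wh h_r(0)| \leq (\log \delta^{-1})^{\zeta/2}$ with probability $1 - o_\delta^\infty(\delta)$. Combining with the radial bound via the triangle inequality yields the claim after relabelling $\zeta$.

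The main obstacle is the between-scales interpolation for the transversal process: for each $k$, one must show that the auxiliary centered Gaussian process $(r,z) \mapsto \wh h_r(z) - \wh h_{r_k}(z)$ on $\{(r,z) : r \in [r_{k+1}, r_k],\, z \in B_r(0)\}$ has supremum of bounded Gaussian tail uniformly in $k$. This reduces to another application of scale invariance plus Borell--TIS, but requires careful bookkeeping to ensure the $O(\log \delta^{-1})$ factor from the dyadic union bound does not swamp the desired $(\log \delta^{-1})^{1/2+\zeta}$ estimate.
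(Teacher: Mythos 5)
Your proposal is correct and reaches the estimate via the same basic strategy as the paper's proof: separate a radial part whose variance grows like $\log\delta^{-1}$ from a transversal part whose tail is uniform in scale, control the latter using scale invariance of the whole-plane GFF modulo additive constant together with Borell--TIS at dyadic scales, and union-bound over the $O(\log\delta^{-1})$ scales. The only substantive difference is the choice of pivot in the decomposition. You center on $\wh h_r(0)$, writing $\wh h_r(z)=\wh h_r(0)+(\wh h_r(z)-\wh h_r(0))$; this buys a clean reflection-principle bound on the radial part, but because $r$ varies continuously inside each dyadic band, it forces the between-scales interpolation for the transversal process that you flag as the main obstacle. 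The paper instead centers on the dyadic anchor $\wh h_{2^{-k}}(0)$, using the scale-invariance identity
\[
\left\{ \wh h_r(z) - \wh h_s(0) : r\in [s,2s] ,\, z\in B_r(0) \right\} \eqD \left\{ \wh h_r(z)  : r\in [1,2] , \, z\in B_r(0)    \right\},
\]
so that a single Borell--TIS application per dyadic band already controls the supremum over both $z\in B_r(0)$ and $r\in[2^{-k},2^{-k+1}]$; the radial term is then just the Gaussian $\wh h_{2^{-k}}(0)$ of variance $k\log 2$, bounded at the discrete scales by an elementary tail estimate and a union bound. This choice makes the interpolation step you identified as delicate disappear entirely. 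The two routes are otherwise equivalent and both give the stated $1-o_\delta^\infty(\delta)$ bound with a rate depending only on $\zeta$.
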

\begin{proof}
Since the law of $\wh h$ is scale invariant, modulo additive constant, and the difference between two circle averages does not depend on the choice of additive constant for the field, we find that for each $s > 0$, 
\eqb \label{eqn-gff-avg-compare}
\left\{ \wh h_r(z) - \wh h_s(0) : r\in [s,2s] ,\, z\in B_r(0) \right\} \eqD \left\{ \wh h_r(z)  : r\in [1,2] , \, z\in B_r(0)    \right\}  .
\eqe  
The process on the right side of~\eqref{eqn-gff-avg-compare} is centered Gaussian with variances bounded above by a universal constant. 
Moreover, $(r,z) \mapsto \wh h_r(z)$ is continuous on $(0,\infty) \times \BB C$ so the supremum of $\wh h_r(z)$ over $r \in [1,2]$ and $z\in B_r(0)$ is a.s.\ finite.
We may therefore apply the  Borell-TIS inequality~\cite{borell-tis1,borell-tis2} (see, e.g.,~\cite[Theorem 2.1.1]{adler-taylor-fields}) to this process to get that for $s \geq 1$, 
\eqbn
\BB P\left[\max \left\{|\wh h_r(z) - \wh h_s(0) | : r\in [s,2s] ,\, z\in B_r(0) \right\} > t \right] \leq c_0 e^{-c_1 t^2} ,\quad \forall t \geq 1
\eqen
for universal constants $c_0,c_1 >0$ (note that we absorbed the expectation of the maximum, which is finite by the Borell-TIS inequality, into $c_0$ and $c_1$). We now take a union bound over dyadic scales to find that, with probability at least $1-2 c_0 \log_2 \delta^{-1} e^{-c_1 (\log \delta^{-1})^{1+2\xi}}=1-o_\delta^\infty(\delta)$, 
\allb \label{eqn-gff-avg-scales}
  \max \left\{|\wh h_r(z) - \wh h_{2^{-k}}(0) | : r \in [2^{-k} , 2^{-k+1}] ,\, z\in B_r(0) \right\}  \leq   \frac12 \left(\log \delta^{-1}\right)^{ 1/2+\zeta }  ,\quad \forall k \in  \left[ 0 ,    \lceil \log_2 \delta^{-1} \rceil \right]_{\BB Z}    .
\alle
Since $\wh h_{2^{-k}}(0)$ is centered Gaussian with variance $\log 2^k$~\cite[Section 3.1]{shef-kpz}, for each $\zeta\in(0,1)$ it holds that
\eqb \label{eqn-gff-avg-gaussian} 
\BB P\left[  |\wh h_{2^{-k}}(0)| \leq  \frac12 (\log \delta^{-1})^{1/2 + \zeta}  ,\:   \forall k \in  \left[ 0 ,   \lceil \log_2 \delta^{-1} \rceil  \right]_{\BB Z}   \right] \geq 1 -o_\delta^\infty(\delta) .
\eqe 
Combining~\eqref{eqn-gff-avg-scales} and~\eqref{eqn-gff-avg-gaussian} yields~\eqref{eqn-gff-avg}. 
\end{proof}

Recall the random radii $R_b$ for $b > 0$ associated with a $\gamma$-quantum cone from~\eqref{eqn-mass-hit-time}, which are chosen so that typically $\mu_h(B_{R_b}(0)) \asymp b$. 

\begin{lem} \label{lem-cone-avg}
Let $h$ be a circle average embedding of a $\gamma$-quantum cone.
There exists $\alpha=\alpha( \gamma)>0$ such that for each $\zeta \in (0,1)$ and each $\delta \in (0,1)$, 
\eqb  \label{eqn-cone-avg}
\BB P\left[ |h_r(z) + \gamma \log r| \leq  ( \log \delta^{-1} )^{1/2+\zeta} ,\:  \forall r \in [R_{1/e} , 1 ] ,\: \forall z \in B_{r/2}(0) \right] \geq 1- O_\delta(\delta^\alpha) .
\eqe 
\end{lem}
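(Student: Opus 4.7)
The strategy is to reduce to Lemma~\ref{lem-gff-avg} via the distributional identification $h|_{\BB D} \eqD (\wh h - \gamma\log|\cdot|)|_{\BB D}$ from Section~\ref{sec-cone-prelim}, with $\wh h$ a whole-plane GFF normalized so that $\wh h_1(0) = 0$. The proof has three ingredients: a polynomial lower tail bound on $R_{1/e}$, the core estimate on the range $r \leq 2/3$, and an extension to $r \in (2/3, 1]$ via rescaling.

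First, the discussion after~\eqref{eqn-cone-scale} identifies $-\log R_{1/e}$ with the first hitting time of $-\gamma^{-1}$ by the drifted Brownian motion $B_t - (Q-\gamma)t$. Since $Q - \gamma > 0$ this hitting time has exponential upper tail by standard Gaussian estimates, yielding for any $\alpha > 0$ a constant $C = C(\alpha,\gamma)$ with $\BB P[R_{1/e} < \delta^C] \leq \delta^\alpha$. Next, for $r \in [\delta^C, 2/3]$ and $z \in B_{r/2}(0)$ we have $|z| + r \leq 3r/2 \leq 1$ and so $\ol{B_r(z)} \subset \ol{\BB D}$; since $|z| < r$, the average of $\log|\cdot|$ on $\bdy B_r(z)$ equals $\log r$. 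Thus the $\BB D$-identification gives equality in joint distribution of the processes $(r,z) \mapsto h_r(z) + \gamma \log r$ and $(r,z) \mapsto \wh h_r(z)$ over this range, and Lemma~\ref{lem-gff-avg} applied with $\delta$ replaced by $\delta^C$ and $\zeta$ by some $\zeta' \in (0, \zeta)$ yields $|\wh h_r(z)| \leq C^{1/2+\zeta'}(\log\delta^{-1})^{1/2+\zeta'} \leq (\log\delta^{-1})^{1/2+\zeta}$ for $\delta$ small, on an event of probability $1 - o_\delta^\infty(\delta)$.

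To handle $r \in (2/3, 1]$ I apply the scaling invariance $h \eqD h^b$ of~\eqref{eqn-cone-scale} with the random $b^* > 1$ defined uniquely by $R_{b^*} = 3/2$; since $h^{b^*}$ is again a circle-average embedding of the $\gamma$-quantum cone (as one checks from the definitions), uniqueness of that embedding gives $h^{b^*} \eqD h$. The transformation formula for circle averages then becomes
\eqbn
h_r(z) + \gamma \log r = \left[ (h^{b^*})_{2r/3}(2z/3) + \gamma \log(2r/3) \right] + \left[ h_{3/2}(0) + \gamma \log (3/2) \right] ,
\eqen
where I have used the identity $\gamma^{-1}\log b^* = h_{3/2}(0) + Q\log(3/2)$ from $R_{b^*} = 3/2$ to simplify the additive shift. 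The rescaled parameters $2r/3 \in (4/9, 2/3]$ and $2z/3 \in B_{r/3}(0) = B_{(2r/3)/2}(0)$ meet the hypotheses of the previous step applied to $h^{b^*}$, bounding the first bracket by $(\log\delta^{-1})^{1/2+\zeta/2}$; and the second bracket equals $B_{-\log(3/2)}$ (the Brownian motion underlying the radial decomposition of $h$ evaluated at time $-\log(3/2)$), so Gaussian concentration bounds it by $(\log\delta^{-1})^{1/2+\zeta/2}$ with probability $1 - o_\delta^\infty(\delta)$. Summing the two brackets gives the claim. The main technical obstacle is this third step: one must justify that the scale invariance $h^{b^*} \eqD h$ survives the use of a random $b^*$ via uniqueness of the circle-average embedding, and carefully track the additive shift; the rest of the proof is a fairly direct application of Lemma~\ref{lem-gff-avg}.
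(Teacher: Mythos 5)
Your steps 1 and 2 are correct and in fact slightly more careful than the paper's own proof: you observe that the distributional identity $(h + \gamma\log|\cdot|)|_{\BB D} \eqD \wh h|_{\BB D}$ only covers circle averages over circles contained in $\BB D$, which requires $r \le 2/3$ when $z \in B_{r/2}(0)$, whereas the paper applies the identity over the whole range $r \in [R_{1/e}, 1]$, $z \in B_r(0)$. Your exact identity for the mean of $\log|\cdot|$ over $\bdy B_r(z)$ when $|z| < r$ is also cleaner than the paper's $O(1)$-error comparison.

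However, step 3 does not work. The random $b^* > 1$ with $R_{b^*} = 3/2$ almost surely does not exist: a fixed radius $r_0 > 1$ lies in $\{R_b : b > 1\}$ only if it is a strict future minimum of $r \mapsto h_r(0) + Q\log r$, a random closed set of zero Lebesgue measure which a.s.\ misses the fixed value $3/2$ (Brownian oscillation forces the level attained at $r = 3/2$ to be recrossed at radii arbitrarily close to and larger than $3/2$, so $R_{b^*} > 3/2$). Even bypassing $b^*$ and working directly with $h(3\cdot/2) - h_{3/2}(0)$, which is what your additive-shift algebra actually produces, this field does \emph{not} have the same law as $h$: its radial process $t \mapsto A_{t - \log(3/2)} - A_{-\log(3/2)}$ samples, for $t \in (0, \log(3/2))$, the \emph{conditioned} part of $B$, so the positive-time radial marginal is not an unconditioned drifted Brownian motion. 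The scaling invariance~\eqref{eqn-cone-scale} holds for $R_b$ with $b$ \emph{deterministic} precisely because $-\log R_b$ is a stopping time at which the strong Markov property resets the conditioning; a deterministic dilation does not have this property, and ``uniqueness of the circle-average embedding'' cannot supply it, because subtracting the random constant $h_{3/2}(0)$ changes the $\gamma$-LQG surface, so $h(3\cdot/2) - h_{3/2}(0)$ is a circle-average embedding of a \emph{different} surface and you would be assuming exactly the distributional invariance you want to establish. A correct treatment of $r \in (2/3, 1]$ argues directly: the relevant circles lie in a fixed compact annulus inside $B_{3/2}(0)$, the radial part of $h$ on radii in $(1, 3/2]$ is a conditioned drifted Brownian motion with explicit Gaussian-type tails, and the lateral part is an honest whole-plane GFF, so $\sup |h_r(z) + \gamma\log r|$ over this compact region has $o_\delta^\infty(\delta)$ failure probability by a Borell--TIS-type estimate without invoking the identity on $\BB D$ at all.
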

\begin{proof}
By Lemma~\ref{lem-cone-hit-tail}, there exists $\alpha=\alpha(\gamma) > 0$ such that  
\eqb \label{eqn-use-cone-hit-tail}
\BB P\left[ R_{1/e} \geq  \delta \right] \geq 1 - O_\delta(\delta^\alpha)  \quad \op{as} \quad \delta \rta 0.
\eqe 
Recall from Section~\ref{sec-cone-prelim} that the restriction of the field $\rng h := h + \gamma \log |\cdot|$ to $\BB D$ agrees in law with the corresponding restriction of a whole-plane GFF normalized so that its circle average over $\bdy\BB D$ is zero. Hence~\eqref{eqn-use-cone-hit-tail} combined with Lemma~\ref{lem-gff-avg} implies that 
\eqbn
\BB P\left[ |\rng h_r(z) | \leq  ( \log \delta^{-1} )^{1/2+\zeta} :  \forall r \in [R_{1/e} , 1 ] ,\: \forall z \in B_r(0) \right] \geq 1- O_\delta(\delta^\alpha) .
\eqen
We now conclude by noting that $ \rng h_r(z) - h_r(z) $ is the average of $\gamma\log |w|$ over $\bdy B_r(z)$, and if $z \in B_{r/2}(0)$ and $w\in B_r(z)$ then $| \gamma \log |w| - \gamma \log r|$ is at most a constant depending only on $\gamma$, so $|\rng h_r(z) - h_r(z) - \gamma \log r|$ is uniformly bounded above.
\end{proof}

\begin{proof}[Proof of Lemma~\ref{lem-cone-max}]
Recall the radii $R_b$ for $b>0$ defined in~\eqref{eqn-mass-hit-time}. We will prove the lemma by applying Lemma~\ref{lem-cone-avg} at the scales $R_{e^k}$ for $k \in \BB N_0$. 
By the scaling property~\eqref{eqn-cone-scale} of the $\gamma$-quantum cone, we can apply Lemma~\ref{lem-cone-avg} with $\delta = e^k$ and the field $h^{e^k} := h(R_{e^k}\cdot) + Q\log R_{e^k} - \gamma^{-1} k \eqD h$ in place of $h$ to find that there is an $\alpha_0 = \alpha_0(\gamma) > 0$ such that for each $k\in\BB N $, it holds with probability $1- O_k(e^{-\alpha_0 k})$ that 
\eqb \label{eqn-use-cone-avg0}
  \left| h_r^{e^k}(z)  + \gamma \log r   \right| \leq  k^{1/2+\zeta}    ,\:  \forall r \in [R_{e^{k-1}} /R_{e^k} , 1 ] ,\: \forall z \in B_{r/2}(0) .
\eqe  
Here we note that $R_{e^{k-1}}/R_{e^{k}}$ is determined by $h^{e^k}$ in the same manner that $R_{1/e}$ is determined by $h$.
 If $ r \in [R_{e^{k-1}} /R_{e^k} , 1 ]$ and $z\in B_{r/2}(0)$, then with $r' :=   R_{e^k} r \in [R_{e^{k-1}} , R_{e^k}]$ and $z' := R_{e^k} z \in B_{R_{e^k} r/2}(0)$, we have
\alb
\left| h_r^{e^k}(z)  + \gamma \log r   \right|
= \left| h_{r'}(z')  + \gamma \log r'   + (Q-\gamma) \log R_{e^k} - \frac{1}{\gamma} k  \right| .
\ale 
Therefore, removing the primes to lighten notation and noting that $B_{r/2}(0) \subset B_{R_{e^k}r/2}(0)$ for $k\geq 0$, we see that~\eqref{eqn-use-cone-avg0} implies that 
\eqb  \label{eqn-use-cone-avg}
 \left| h_r(z)  + \gamma \log r  + (Q-\gamma) \log R_{e^k} - \frac{1}{\gamma} k  \right| \leq  k^{1/2+\zeta} ,\:  \forall r \in [R_{e^{k-1}} , R_{e^k} ] ,\: \forall z \in B_{r/2}(0)  .
\eqe 
By Lemma~\ref{lem-cone-hit-tail} (applied with $C = e^{\zeta k}$), there is a constant $\alpha_1 = \alpha_1(\zeta,\gamma) \in (0,\alpha_0]$ such that for each $k\in\BB N$,  
\eqbn
\BB P\left[  \exp\left( \left( \frac{k}{\gamma (Q-\gamma)}  - \zeta \right) k \right)   \leq  R_{e^k} \leq   \exp\left( \left( \frac{1}{\gamma (Q-\gamma)} + \zeta \right) k   \right) \right] 
\geq 1  - O_k(e^{-\alpha_1 k})  .
\eqen
Using this to estimate the term $(Q-\gamma) \log R_{e^k} - \gamma^{-1} k$ in~\eqref{eqn-use-cone-avg} gives that for a $\gamma$-dependent constant $C>0$, 
\eqbn
\BB P\left[ \left| h_r(z)  + \gamma \log r   \right| \leq  C \zeta k ,\:  \forall r \in [R_{e^{k-1}} , R_{e^k} ] ,\: \forall z \in B_{r/2}(0) \right] 
\geq 1- O_k(e^{-\alpha_1 k}) .
\eqen
If we are given $k_0 \in \BB N$, we can sum over all $k\geq k_0+1$ to get that for a possibly larger constant $C$, still depending only on $\gamma$,
\eqb  \label{eqn-outer-avg0}
\BB P\left[ \left| h_r(z)  + \gamma \log r   \right| \leq  C \zeta \log  r ,\:  \forall r \geq R_{e^{k_0}} ,\: \forall z \in B_{r/2}(0) \right] 
\geq 1- O_{k_0}(e^{-\alpha_1 k_0}) .
\eqe 
By Lemma~\ref{lem-cone-hit-tail}, there exists $\alpha_2=\alpha_2(\gamma) > 0$ such that for each $S \geq 1$ we can find $k_0 = k_0(S, \gamma) \in \BB N$ such that $e^{k_0}$ is at least some positive power of $S$ and $\BB P[ R_{e^{k_0}} \leq S ] \geq 1-O_S(S^{-\alpha_2})$. Since $\zeta \in (0,1)$ is arbitrary, we can combine this with~\eqref{eqn-outer-avg0} to get~\eqref{eqn-cone-max}. 
\end{proof}

\setstretch{1}

\bibliography{cibiblong,cibib,tomsrefs}
\bibliographystyle{hmralphaabbrv}

\end{document}